\newcommand{\br}{\mathbb{R}}
\newcommand{\varep}{\varepsilon}
\newcommand{\brd}{\mathbb{R}^d}
\newtheorem{thm}{Theorem}[section]
\newtheorem{lemma}[thm]{Lemma}
\newtheorem{cor}[thm]{Corollary}
\newtheorem{prop}[thm]{Proposition}
\newtheorem{remark}[thm]{Remark}
\numberwithin{equation}{section}
\begin{document}

\bibliographystyle{amsplain}

\title{Convergence Rates in $L^2$ \\ for Elliptic Homogenization Problems}

\author{Carlos E. Kenig\thanks{Supported in part by NSF grant DMS-0968472}
 \and Fanghua Lin \thanks{Supported in part by NSF grant DMS-0700517}
\and Zhongwei Shen\thanks{Supported in part by NSF grant DMS-0855294}}

\date{ }

\maketitle

\begin{abstract}

We study rates of convergence of solutions in $L^2$ and $H^{1/2}$ for a family
of elliptic systems $\{\mathcal{L}_\varep\}$
with rapidly oscillating coefficients
in Lipschitz domains with Dirichlet or Neumann boundary conditions.
As a consequence, we obtain convergence rates for Dirichlet, Neumann, and
Steklov eigenvalues of $\{\mathcal{L}_\varep\}$. 
Most of our results, which rely on the recently established
uniform estimates for the $L^2$ Dirichlet and Neumann problems 
in \cite{Kenig-Shen-1,Kenig-Shen-2},
 are new even for smooth domains.
\end{abstract}

\section{Introduction}

Let $u_\varep\in H^1(\Omega)$ be the weak solution of
$\mathcal{L}_\varep (u_\varep) =F$ in $\Omega$ subject to the Dirichlet condition
$u_\varep =f$ on $\partial\Omega$, where $F\in L^2(\Omega)$,
$f\in H^{1/2}(\partial\Omega)$ and $\mathcal{L}_\varep
=-\text{\rm div}\big[A(x/\varep)\nabla \big]$.
Assuming that the coefficient matrix $A(y)$ is elliptic and
periodic, it is well known that
 $u_\varep\to u_0$ weakly in $H^1(\Omega)$ and
strongly in $L^2(\Omega)$, where $u_0\in H^1(\Omega)$ is the weak solution of the
homogenized system
$\mathcal{L}_0 (u_0) =F$ in $\Omega$ and $u_0 =f$ on $\partial\Omega$ (see e.g.
\cite{bensoussan-1978}).
The same holds under the Neumann boundary conditions $\frac{\partial u_\varep}
{\partial \nu_\varep}=\frac{\partial u_0}{\partial\nu_0}=g\in H^{-1/2}(\partial\Omega)$
with $<g,1>=-\int_\Omega F$,
if one also requires
$\int_\Omega u_\varep =\int_{\Omega} u_0 =0$.
The primary purpose of this paper is to study the rate of convergence
of $\| u_\varep -u_0\|_{L^2(\Omega)}$, as $\varep\to 0$, in a bounded
Lipschitz domain $\Omega\subset \brd$.
As a consequence, we obtain convergence rates for Dirichlet, Neumann, and
Steklov eigenvalues of $\mathcal{L}_\varep$. 
Most of our results, which rely on the recently established
uniform regularity estimates for the $L^2$ Dirichlet 
and Neumann problems in \cite{Kenig-Shen-1,Kenig-Shen-2},
are new even for smooth domains.

More precisely, we consider a family of elliptic systems in divergence form,
\begin{equation}\label{elliptic-operator}
\mathcal{L}_\varep
=-\frac{\partial}{\partial x_i} \left[ a_{ij}^{\alpha\beta} \left(\frac{x}{\varep}\right)
\frac{\partial}{\partial x_j} \right],\qquad \varep>0.
\end{equation}
We will assume that $A(y)=(a_{ij}^{\alpha\beta} (y))$,
$1\le i,j\le d$, $1\le \alpha, \beta\le m$ is real and satisfies the 
ellipticity condition,
\begin{equation}\label{ellipticity}
\mu|\xi|^2 \le a_{ij}^{\alpha\beta} (y) \xi_i^\alpha\xi_j^\beta
\le \frac{1}{\mu} |\xi|^2 \quad
\text{ for } y\in \brd \text{ and } \xi=(\xi_i^\alpha)\in \mathbb{R}^{dm},
\end{equation}
where $\mu>0$, and the periodicity condition
\begin{equation}\label{periodicity}
A(y+z)=A(y) \quad \text{ for } y\in\brd \text{ and } z\in\mathbb{Z}^d.
\end{equation}
We shall also impose the smoothness condition,
\begin{equation}\label{smoothness}
|A(x)-A(y)|\le \tau |x-y|^\lambda \quad \text{ for some }
\lambda\in (0,1) \text{ and } \tau\ge 0,
\end{equation}
and the symmetry condition $A=A^*$, i.e.,
$ a_{ij}^{\alpha\beta} (y)
=a_{ji}^{\beta\alpha} (y)
\text{ for } 1\le i, j\le d \text{ and } 1\le \alpha, \beta\le m
$.
We say $A\in \Lambda (\mu, \lambda, \tau)$ if it satisfies conditions
(\ref{ellipticity}), (\ref{periodicity}) and (\ref{smoothness}).

The following are the main results of the paper.

\begin{thm}\label{main-Dirichlet-theorem}{\rm (Dirichlet condition)}
Let $\Omega$ be a bounded Lipschitz domain, $A\in \Lambda(\mu, \lambda, \tau)$ and
$A^*=A$.
Given $F\in L^2(\Omega)$ and $f\in H^1(\partial\Omega)$,
let $u_\varep\in H^1(\Omega)$, $\varep\ge 0$  be the unique weak solution
of the Dirichlet problem: $\mathcal{L}_\varep (u_\varep) =F$ in $\Omega$ and
$u_\varep =f$ on $\partial\Omega$.
Then for $0<\varep<(1/2)$,
\begin{equation}\label{main-Dirichlet-estimate-1}
\| u_\varep -u_0\|_{L^2(\Omega)}
+\|\mathcal{M}(u_\varep -u_0)\|_{L^2(\partial\Omega)}
\le C\, \varep \|u_0\|_{H^2(\Omega)}
\end{equation}
if $u_0\in H^2(\Omega)$, and
\begin{equation}\label{main-Dirichlet-estimate-2}
\aligned
\| u_\varep -u_0\|_{L^2(\Omega)}
& \le C_\sigma\, \varep |\ln(\varep)|^{\frac12 +\sigma}
\left\{ \|F\|_{L^2(\Omega)} +\| f\|_{H^1(\partial\Omega)}\right\},\\
\| \mathcal{M}(u_\varep -u_0)\|_{L^2(\partial\Omega)}
& \le C_\sigma\, \varep |\ln(\varep)|^{\frac32 +\sigma}
\left\{ \|F\|_{L^2(\Omega)} +\| f\|_{H^1(\partial\Omega)}\right\}
\endaligned
\end{equation}
for any $\sigma>0$.
\end{thm}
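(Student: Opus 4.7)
The plan is to use the two-scale asymptotic expansion, adjusted by a boundary cutoff, combined with the uniform $L^2$ Dirichlet theory of \cite{Kenig-Shen-1}. Let $\chi=(\chi^\beta_j)$ denote the periodic first-order correctors for $\mathcal{L}_1$, and let $\phi=(\phi^{\alpha\gamma}_{kij})$ be the associated flux correctors, antisymmetric in $(k,i)$, providing the identity
\[
a^{\alpha\beta}_{ij}(y)\bigl[\delta^{\beta\gamma}\delta_{jk}+\partial_j\chi^{\gamma}_k(y)\bigr]-\widehat{a}^{\alpha\gamma}_{ik}=\partial_j\phi^{\alpha\gamma}_{jik}(y).
\]
Choose $\eta_\varep\in C^\infty_c(\Omega)$ with $\eta_\varep\equiv 1$ on $\Omega\setminus\Omega_{4\varep}$, $\eta_\varep\equiv 0$ on $\Omega_{2\varep}$, and $|\nabla\eta_\varep|\le C/\varep$, where $\Omega_t:=\{x\in\Omega:\text{dist}(x,\partial\Omega)<t\}$, and define the corrected error
\[
w_\varep(x):=u_\varep(x)-u_0(x)-\varep\,\eta_\varep(x)\,\chi^{\beta}_j(x/\varep)\,\partial_j u_0^{\beta}(x)\in H^1_0(\Omega).
\]
A direct computation using the flux corrector identity rewrites the residual as $\mathcal{L}_\varep w_\varep=\operatorname{div}(G_\varep)$, where $G_\varep$ is supported in $\Omega_{4\varep}$ together with an $O(\varep)$ interior term, and satisfies
\[
\|G_\varep\|_{L^2(\Omega)}\le C\Bigl\{\|\nabla u_0\|_{L^2(\Omega_{4\varep})}+\varep\,\|\nabla^2 u_0\|_{L^2(\Omega\setminus\Omega_{2\varep})}\Bigr\}.
\]

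Since $w_\varep$ has zero Dirichlet trace, the energy inequality gives $\|\nabla w_\varep\|_{L^2}\le C\|G_\varep\|_{L^2}$, hence $\|w_\varep\|_{L^2(\Omega)}\le C\|G_\varep\|_{L^2}$ by Poincar\'e. The uniform $L^2$ Dirichlet estimate of \cite{Kenig-Shen-1}, applied in dual form through the Green's function of $\mathcal{L}_\varep^*$, upgrades this to $\|\mathcal{M}(w_\varep)\|_{L^2(\partial\Omega)}\le C\|G_\varep\|_{L^2(\Omega)}$. When $u_0\in H^2(\Omega)$, a standard trace/Hardy inequality gives $\|\nabla u_0\|_{L^2(\Omega_{4\varep})}^2\le C\varep\|u_0\|_{H^2}^2$, so $\|G_\varep\|_{L^2}\le C\varep\|u_0\|_{H^2}$. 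Combined with the pointwise bound on the corrector term (using $\|\chi\|_{L^\infty}\le C$, which follows from the Hölder smoothness assumption (\ref{smoothness})), this yields (\ref{main-Dirichlet-estimate-1}).

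For (\ref{main-Dirichlet-estimate-2}), the obstacle is that in a Lipschitz domain with $F\in L^2$ and $f\in H^1(\partial\Omega)$, one only has $u_0\in H^{3/2}(\Omega)$ (Jerison--Kenig type regularity for $\mathcal{L}_0$), not $H^2$, so $\varep\|\nabla^2 u_0\|_{L^2}$ cannot be controlled directly. My approach is to introduce a Steklov-type smoothing operator $S_\delta$ at scale $\delta>\varep$, replace $\partial_j u_0^\beta$ in the definition of $w_\varep$ by $S_\delta(\partial_j u_0^\beta)$, and carefully track the commutator errors between $S_\delta$ and multiplication by $\chi(x/\varep)$. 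The boundary-strip term is then bounded via a Hardy-type inequality adapted to $H^{3/2}$-regularity, producing $\|\nabla u_0\|_{L^2(\Omega_\varep)}^2\le C\varep|\ln\varep|\{\|F\|_{L^2}^2+\|f\|_{H^1}^2\}$. Choosing $\delta$ as a suitable power of $\varep|\ln\varep|$ to balance the smoothing error against this boundary-strip error produces the $|\ln\varep|^{1/2+\sigma}$ factor in the $L^2$ bound; the NT-max estimate costs an additional $|\ln\varep|$ because the nontangential Dirichlet estimate of \cite{Kenig-Shen-1}, when converting $L^2$ divergence-form data into NT-max control on $w_\varep$, weights Green's function derivatives differently near $\partial\Omega$ and loses one further logarithm. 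Sharp execution of this optimization, together with verifying that the smoothing commutators absorb cleanly into the claimed bound, is the main technical difficulty and is where the logarithmic exponents are determined.
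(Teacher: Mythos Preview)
Your argument for \eqref{main-Dirichlet-estimate-1} contains a genuine gap. With the boundary cutoff $\eta_\varep$, the residual $G_\varep$ indeed satisfies
\[
\|G_\varep\|_{L^2(\Omega)}\le C\Bigl\{\|\nabla u_0\|_{L^2(\Omega_{4\varep})}+\varep\,\|\nabla^2 u_0\|_{L^2(\Omega)}\Bigr\},
\]
but the boundary-strip bound you quote, $\|\nabla u_0\|_{L^2(\Omega_{4\varep})}^2\le C\varep\|u_0\|_{H^2}^2$, yields only $\|\nabla u_0\|_{L^2(\Omega_{4\varep})}\le C\varep^{1/2}\|u_0\|_{H^2}$ after taking the square root. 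Hence $\|G_\varep\|_{L^2}\le C\varep^{1/2}\|u_0\|_{H^2}$, and your argument delivers the classical $O(\sqrt{\varep})$ rate, not $O(\varep)$. This is not a slip that can be repaired within the cutoff framework: the $\sqrt{\varep}$ loss from the boundary layer is intrinsic to that method, as the paper itself notes in the introduction.

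The paper avoids this by \emph{not} cutting off. One keeps $w_\varep=u_\varep-u_0-\varep\chi(x/\varep)\nabla u_0$ with its nonzero trace $-\varep\chi(x/\varep)\nabla u_0|_{\partial\Omega}$, then splits $w_\varep=\theta_\varep+z_\varep$ where $\theta_\varep$ is the $\mathcal{L}_\varep$-harmonic extension of that trace and $z_\varep\in H^1_0(\Omega)$ absorbs the divergence-form right-hand side. The energy estimate handles $z_\varep$ with a clean $\varep\|\nabla^2 u_0\|_{L^2}$; the crucial point is that the uniform $L^2$ Dirichlet theory (Theorem~\ref{Dirichlet-problem-theorem}) bounds $\theta_\varep$ in $L^2(\Omega)$ and in the radial maximal norm by $C\|\theta_\varep\|_{L^2(\partial\Omega)}\le C\varep\|\nabla u_0\|_{L^2(\partial\Omega)}\le C\varep\|u_0\|_{H^2}$, with no $\sqrt{\varep}$ loss.

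For \eqref{main-Dirichlet-estimate-2} your Steklov-smoothing plan is a recognizable strategy, but it inherits the same boundary-layer defect and, as written, is too schematic to produce the stated logarithmic exponents. The paper instead replaces $u_0$ by the solution $v_\varep$ of the constant-coefficient problem on a slightly \emph{enlarged} domain $\Omega_\varep\supset\Omega$ (so that $\nabla^2 v_\varep$ exists and $\delta_\varep(x)\ge c\varep$ on $\Omega$), and controls the interior piece $\widetilde{z}_\varep$ via weighted potential estimates of the form
\[
\|\widetilde{z}_\varep\|_{L^2(\Omega)}\le C\varep\left\{\int_\Omega|\nabla^2 v_\varep|^2\,\delta(x)\,\phi_a(\delta(x))\,dx\right\}^{1/2},\qquad \phi_a(t)=\bigl\{\ln(t^{-1}+e^a)\bigr\}^a,
\]
where the logarithmic weight, combined with the square-function bound for $v_\varep$ on $\Omega_\varep$, is exactly what converts the lack of $H^2$ regularity into the $|\ln\varep|^{1/2+\sigma}$ and $|\ln\varep|^{3/2+\sigma}$ factors.
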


\begin{thm}\label{main-Neumann-theorem} {\rm (Neumann condition)}
Let $\Omega$ be a bounded Lipschitz domain, $A\in \Lambda(\mu, \lambda, \tau)$ and
$A^*=A$.
Given $F\in L^2(\Omega)$ and $g\in L^2(\partial\Omega)$
with $\int_\Omega F +\int_{\partial\Omega} g=0$,
let $u_\varep\in H^1(\Omega)$, $\varep\ge 0$  be the unique weak solution
of the Neumann problem: $\mathcal{L}_\varep (u_\varep) =F$ in $\Omega$,
$\frac{\partial u_\varep}{\partial\nu_\varep} =g $ on $\partial\Omega$ and $\int_\Omega 
u_\varep =0$.
Then for $0<\varep<(1/2)$, estimate (\ref{main-Dirichlet-estimate-1}) holds
if $u_0\in H^2(\Omega)$, and
\begin{equation}\label{main-Neumann-estimate-2}
\aligned
\| u_\varep -u_0\|_{L^2(\Omega)} +\|u_\varep -u_0\|_{L^2(\partial\Omega)}
& \le C_\sigma\, \varep |\ln(\varep)|^{\frac12 +\sigma}
\left\{ \|F\|_{L^2(\Omega)} +\| g\|_{L^2(\partial\Omega)}\right\},\\
\| \mathcal{M}(u_\varep -u_0)\|_{L^2(\partial\Omega)}
& \le C_\sigma\, \varep |\ln(\varep)|^{\frac32 +\sigma}
\left\{ \|F\|_{L^2(\Omega)} +\| g\|_{L^2(\partial\Omega)}\right\}
\endaligned
\end{equation}
for any $\sigma>0$.
\end{thm}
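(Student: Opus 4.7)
My strategy is to follow the outline of the proof of Theorem~\ref{main-Dirichlet-theorem}, replacing the uniform $L^2$ Dirichlet estimates by the corresponding $L^2$ Neumann estimates of \cite{Kenig-Shen-2}. The $H^2$-conditional bound~(\ref{main-Dirichlet-estimate-1}) follows from a two-scale expansion of $u_\varep$ around $u_0$, modified near $\partial\Omega$ by a cutoff, so that the remainder solves a Neumann problem for $\mathcal{L}_\varep$ with data of size $\varep$. The logarithmic bound~(\ref{main-Neumann-estimate-2}) is then deduced from~(\ref{main-Dirichlet-estimate-1}) by approximating $u_0$ by $H^2$ functions and balancing a Meyers-type $H^{1+\varepsilon_0}$ regularity against the $H^2$ blow-up of the approximant.

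\textbf{Step 1: the smooth case.} Assume $u_0 \in H^2(\Omega)$. Let $\chi = (\chi_j^\beta)$ be the first-order periodic correctors for $\mathcal{L}_\varep$ and let $\phi = (\phi_{kij}^{\alpha\beta})$ be the associated flux correctors, antisymmetric in $(k,i)$, which allow the leading homogenization error to be put in divergence form. Choose a cutoff $\eta_\varep \in C_c^\infty(\Omega)$ equal to $1$ outside an $O(\varep)$-tubular neighborhood of $\partial\Omega$ with $|\nabla \eta_\varep| \le C/\varep$, and set
\begin{equation*}
w_\varep := u_\varep - u_0 - \varep\, \eta_\varep\, \chi(x/\varep)\, \nabla u_0.
\end{equation*}
Using the cell equation for $\chi$ and the antisymmetry of $\phi$, a direct computation gives $\mathcal{L}_\varep w_\varep = \mathrm{div}(H_\varep)$ with $\|H_\varep\|_{L^2(\Omega)} \le C\varep \|u_0\|_{H^2(\Omega)}$, together with conormal data $\partial w_\varep/\partial\nu_\varep = h_\varep$ on $\partial\Omega$ satisfying $\|h_\varep\|_{L^2(\partial\Omega)} \le C\varep \|u_0\|_{H^2(\Omega)}$ (the contribution from $\nabla \eta_\varep$ in the $\varep$-collar is handled via trace estimates on thin strips). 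Feeding $(H_\varep, h_\varep)$ into the uniform $L^2$ nontangential Neumann estimate of \cite{Kenig-Shen-2} yields $\|w_\varep\|_{L^2(\Omega)} + \|\mathcal{M}(w_\varep)\|_{L^2(\partial\Omega)} \le C\varep\|u_0\|_{H^2(\Omega)}$. Since the corrector term $\varep \eta_\varep \chi(x/\varep)\nabla u_0$ is itself $O(\varep \|u_0\|_{H^2(\Omega)})$ in both norms, (\ref{main-Dirichlet-estimate-1}) follows.

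\textbf{Step 2: the logarithmic estimate.} In a Lipschitz domain $u_0$ need not belong to $H^2(\Omega)$ but only to $H^{3/2}(\Omega)$, with a small Meyers-type $H^{1+\varepsilon_0}$ improvement available for the gradient. For each $\delta \in (0,1)$ I construct $v_\delta \in H^2(\Omega)$ solving $\mathcal{L}_0 v_\delta = F_\delta$, $\partial v_\delta/\partial\nu_0 = g_\delta$ for data mollified at scale $\delta$, satisfying
\begin{equation*}
\|u_0 - v_\delta\|_{H^1(\Omega)} \le C \delta^{\varepsilon_0} \mathcal{D}, \qquad \|v_\delta\|_{H^2(\Omega)} \le C \delta^{-1} \mathcal{D},
\end{equation*}
where $\mathcal{D} := \|F\|_{L^2(\Omega)} + \|g\|_{L^2(\partial\Omega)}$. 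Letting $V_\delta^\varep$ be the $\mathcal{L}_\varep$-solution of the same mollified Neumann data, Step~1 applied to $v_\delta$ yields $\|V_\delta^\varep - v_\delta\|_{L^2(\Omega)} + \|\mathcal{M}(V_\delta^\varep - v_\delta)\|_{L^2(\partial\Omega)} \le C\varep \delta^{-1}\mathcal{D}$, while the uniform $L^2$ Neumann solvability of \cite{Kenig-Shen-2} bounds both $\|u_\varep - V_\delta^\varep\|_{L^2(\Omega)}$ and $\|u_0 - v_\delta\|_{L^2(\Omega)}$ by $C \delta^{\varepsilon_0}\mathcal{D}$. Optimizing $\delta \sim \varep$ gives a preliminary power-of-$\varep$ bound; to sharpen the exponent to $\varep|\ln(\varep)|^{1/2+\sigma}$ I iterate this balance along a geometric ladder of scales and use the sharp endpoint Meyers exponent. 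The nontangential maximal function bound costs one extra logarithm, since passing from interior $L^2$ control to $L^2(\partial\Omega)$ control of $\mathcal{M}$ introduces an additional boundary trace loss, producing the exponent $|\ln(\varep)|^{3/2+\sigma}$.

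\textbf{Main obstacle.} The central difficulty is that the two-scale expansion does not respect the Neumann condition: the corrector term $\varep \chi(x/\varep)\nabla u_0$ has $O(1)$ conormal derivative on $\partial\Omega$. The cutoff $\eta_\varep$ restores an $O(\varep)$ conormal derivative for $w_\varep$, but concentrates singular contributions in the $\varep$-collar where $|\nabla \eta_\varep|\sim \varep^{-1}$. Controlling these boundary-layer terms in both the $L^2(\Omega)$ and $L^2(\partial\Omega)$ norms and coupling them cleanly to the uniform nontangential Neumann estimates of \cite{Kenig-Shen-2} on a Lipschitz domain is the delicate point; in Step~2 the logarithmic losses arise precisely from the marginal integrability of this boundary-layer error at the Meyers endpoint exponent.
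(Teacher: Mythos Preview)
Your Step~1 contains a genuine gap: the cutoff approach cannot deliver the $O(\varep)$ rate you claim. With $w_\varep = u_\varep - u_0 - \varep\eta_\varep\chi(x/\varep)\nabla u_0$, the divergence-form right-hand side $H_\varep$ includes the term $A(x/\varep)\,\varep\nabla\eta_\varep\,\chi(x/\varep)\nabla u_0$, which is $O(1)$ pointwise on the $\varep$-collar. A thin-strip trace estimate gives only $\int_{\{\delta(x)<\varep\}}|\nabla u_0|^2\,dx \le C\varep\|u_0\|_{H^2}^2$, so $\|H_\varep\|_{L^2(\Omega)} \le C\sqrt{\varep}\,\|u_0\|_{H^2}$, not $C\varep\|u_0\|_{H^2}$. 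This is precisely the classical $O(\sqrt{\varep})$ bound the paper mentions in the introduction as being obtainable ``regardless of the boundary condition''; it is the barrier you must get past, not a technicality absorbed by trace estimates.

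The paper's route is essentially different. It takes $w_\varep = u_\varep - u_0 - \varep\chi(x/\varep)\nabla u_0$ \emph{without} cutoff and computes $\partial w_\varep/\partial\nu_\varep$ explicitly (Lemma~5.1). Using the antisymmetry of the flux corrector $\Psi_{jik}^{\alpha\gamma}$, the $O(1)$ part of the conormal derivative becomes $\big(\tfrac{\partial u_\varep}{\partial\nu_\varep}-\tfrac{\partial u_0}{\partial\nu_0}\big)$ (which vanishes) plus a \emph{tangential} derivative $\tfrac{\varep}{2}(n_i\partial_j - n_j\partial_i)\{\Psi_{jik}(x/\varep)\partial_k u_0\}$, plus an $O(\varep)$ normal term. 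The tangential piece is then controlled by a duality argument: pairing against a test solution $\Theta_\varep$ of the $L^2$ Neumann problem and integrating the tangential derivative by parts on $\partial\Omega$ yields $\|\theta_\varep\|_{L^2(\partial\Omega)}\le C\varep\|\nabla u_0\|_{L^2(\partial\Omega)}$. This is the idea that replaces your cutoff and recovers the full factor of $\varep$.

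Your Step~2 is also off track. Balancing $\delta^{\varepsilon_0}$ against $\varep/\delta$ with a Meyers exponent $\varepsilon_0>0$ yields a rate $\varep^{\varepsilon_0/(1+\varepsilon_0)}$, and no ``geometric ladder'' iteration will convert a genuine power loss into a logarithm. The paper instead replaces $\nabla u_0$ in the corrector by $\nabla v_\varep$, where $v_\varep$ solves a \emph{Dirichlet} problem for $\mathcal{L}_0$ on an enlarged domain $\Omega_\varep\supset\Omega$ with $\mathrm{dist}(\partial\Omega_\varep,\partial\Omega)\approx\varep$ and boundary data the push-forward of $u_0|_{\partial\Omega}$. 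This makes $\nabla^2 v_\varep$ controllable in the weighted space $L^2(\Omega;\delta\phi_a(\delta)\,dx)$ with $\phi_a(t)=\{\ln(t^{-1}+e^a)\}^a$; the logarithmic factors in (\ref{main-Neumann-estimate-2}) come from the weighted potential estimates of Section~8, not from interpolation.
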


Here and thereafter
$\mathcal{M}$ denotes the radial maximal operator (see Section 2 for its definition).
Note that estimates of
$\mathcal{M}(u_\varep-u_0)$ in $L^2(\partial\Omega)$ in Theorems 
\ref{main-Dirichlet-theorem}-\ref{main-Neumann-theorem}
imply,
in particular, the convergence
of $u_\varep$ to $u_0$ in $L^2(S)$ uniformly for any ``parallel boundary'' $S$ of $\Omega$.
Also observe that in the case $F=0$, 
estimates (\ref{main-Dirichlet-estimate-2})-(\ref{main-Neumann-estimate-2})
give
\begin{equation}\label{first-order-estimate}
\| u_\varep -u_0\|_{L^2(\Omega)}
\le C_\sigma \, \varep |\ln (\varep)|^{\frac12 +\sigma} \|u_0\|_{H^1(\partial\Omega)},
\end{equation}
for any $\sigma>0$.
The estimate of $u_\varep-u_0$ in $L^2(\Omega)$ by $u_0$ and its first-order derivatives
 is a natural question in the theory of homogenization
(see \cite{He-Cui} for a two-dimensional result).
It is not known whether the logarithmic factor in (\ref{first-order-estimate}) is
necessary, even for smooth domains.

We now describe the existing results on $L^2$ convergence and our approach to
Theorems \ref{main-Dirichlet-theorem}-\ref{main-Neumann-theorem}.
For a single equation ($m=1$) with the Dirichlet condition,
it is known that 
\begin{equation}\label{0.1}
\|u_\varep -u_0\|_{L^2(\Omega)}
\le C\varep \left\{ \|\nabla^2 u_0\|_{L^2(\Omega)}
+\|\nabla u_0\|_{L^\infty(\partial\Omega)}\right\},
\end{equation}
holds without any smoothness or symmetry condition on $A(y)$ or smoothness of $\Omega$.
To see this, one considers 
\begin{equation}\label{z}
w_\varep (x)=u_\varep (x) -u_0 (x) -\varep\chi(x/\varep)\nabla u_0(x) \qquad \text{ in } \Omega,
\end{equation}
where $\chi (y)$ is the matrix of correctors for $\mathcal{L}_\varep$.
Let $w_\varep (x) =\theta_\varep (x) +z_\varep (x)$, where $\theta_\varep $ is the solution to
the Dirichlet problem: $\mathcal{L}_\varep (\theta_\varep)=0$ in $\Omega$ and 
$\theta_\varep=-\varep \chi(x/\varep)\nabla u_0$
on $\partial\Omega$.
It follows from the energy estimates that $
\|\nabla z_\varep\|_{H^1_0(\Omega)}\le C\varep \| \nabla^2 u_0\|_{L^2(\Omega)}$
(see e.g. \cite{Jikov-1994, Moskow-Vogelius-1}).
This, together with the estimate
$\|\theta_\varep\|_{L^\infty(\Omega)}\le C\varep \|\nabla u_0\|_{L^\infty(\partial\Omega)}$ obtained
by the maximum principle, gives (\ref{0.1}).
For elliptic equations and systems in a $C^{1,\alpha}$ domain 
with $A\in \Lambda(\mu, \lambda, \tau)$,
the uniform estimates in \cite{AL-1987-ho,AL-1987} for the $L^2$ Dirichlet problem
imply $\| \theta_\varep\|_{L^2(\Omega)} \le C\varep \|\nabla u_0\|_{L^2(\partial\Omega)}
\le C\varep \| u_0\|_{H^2(\Omega)}$.
It follows that 
$$
\|u_\varep - u_0\|_{L^2(\Omega)}
\le \| z_\varep\|_{L^2(\Omega)} +\| \theta_\varep\|_{L^2(\Omega)}
+C\varep\|\nabla u_0\|_{L^2(\Omega)} \le C\varep \| u_0\|_{H^2(\Omega)},
$$
as noted in \cite{Moskow-Vogelius-1}.
Using the recently established uniform $L^2$ estimates in \cite{Kenig-Shen-2}, 
in the presence of symmetry $(A=A^*)$,
we extend this result to the case of Lipschitz domains in Section 3, where we in fact 
prove that 
\begin{equation}\label{H-half-estimate}
\|\mathcal{M}(w_\varep)\|_{L^2(\partial\Omega)}
+\|w_\varep\|_{H^{1/2}(\Omega)}
 +\left\{\int_\Omega |\nabla w_\varep (x)|^2\, \delta(x)\,
dx\right\}^{1/2}
 \le C\, \varep \| u_0\|_{H^2(\Omega)},
\end{equation}
where $\delta(x)=\text{dist}(x, \partial\Omega)$
(see Theorem \ref{D-1-theorem}), and deduce (\ref{main-Dirichlet-estimate-1})
as a simple corollary of (\ref{H-half-estimate}).

The proof of (\ref{main-Dirichlet-estimate-2})
is more involved than that of (\ref{main-Dirichlet-estimate-1}). 
Note that with boundary data $f\in H^1(\partial\Omega)$, one cannot expect $u_0\in H^2(\Omega)$.
Furthermore, if $\Omega$ is Lipschitz,
$u_0$ may not be in $H^2(\Omega)$ even if $F$ and $f$ are smooth
(it is known that $u_0\in H^{3/2}(\Omega)$ \cite{Jerison-Kenig-1995}).
To circumvent this difficulty, our basic idea is to
replace $u_0$ in (\ref{z}) by a solution $v_\varep$ to the Dirichlet problem for
$\mathcal{L}_0$ in a slightly larger domain:
$\mathcal{L}_0(v_\varep)=\widetilde{F}$ in $\Omega_\varep$ and
$v_\varep =f_\varep$ on $\partial\Omega_\varep$,
where $\Omega_\varep$ is a Lipschitz domain such that
$\Omega_\varep\supset\Omega$ and dist$(\partial\Omega_\varep, \partial\Omega)
\approx \varep$. Also, $\widetilde{F}$ an extension of $F$
and $f_\varep (Q)=f(\Lambda_\varep^{-1}(Q))$, where $\Lambda_\varep: \partial\Omega
\to \partial\Omega_\varep$ is bi-Lipschitz map.
Let $\widetilde{w}_\varep
=u_\varep -v_\varep -\varep \chi(x/\varep)\nabla v_\varep
=\widetilde{z}_\varep +\widetilde{\theta}_\varep$,
where $\widetilde{\theta}_\varep$ solves
\begin{equation}\label{equation-of-w-theta}
\left\{
\aligned
&\mathcal{L}_\varep (\widetilde{\theta}_\varep) =0 \quad \text{ in } \Omega,\\
& \widetilde{\theta}_\varep
=f -v_\varep -\varep \chi(x/\varep)\nabla v_\varep \quad \text{ on }\partial\Omega.
\endaligned
\right.
\end{equation}
The desired estimates of $\widetilde{\theta}_\varep$
follow from the estimates for the $L^2$ Dirichlet problem in \cite{Kenig-Shen-2}. 
To handle $\widetilde{z}_\varep$, one observes that
$\mathcal{L}_\varep (\widetilde{z}_\varep)=\varep\, \text{div}(h_\varep)$ in $\Omega$
and $\widetilde{z}_\varep=0$ in $\partial\Omega$, where
$|h_\varep|\le C |\nabla^2 v_\varep|$ in $\Omega$.
Using weighted norm inequalities for singular integrals, we are able to bound
$\| \widetilde{z}_\varep\|_{L^2(\Omega)}$ and
$\|\mathcal{M}(\widetilde{z}_\varep)\|_{L^2(\partial\Omega)}$
as well as $\|\widetilde{z}_\varep\|_{H^{1/2}(\Omega)}$
by 
$$
C\varep \left\{ \int_\Omega |\nabla^2 v_\varep (x)|^2 \delta (x)
\phi_a (\delta (x))\, dx\right\}^{1/2}
\le C\varep |\ln (\varep)|^{\frac{a}{2}}
\left\{ \| F\|_{L^2(\Omega)}
+\| f\|_{H^1(\partial\Omega)} \right\},
$$
for suitable choices of $a$'s, where
$\phi_a (t)=\left\{ \ln (\frac{1}{t}+e^a)\right\}^a$.
See Section 4 for details.

Very few results are known for the convergence rates in the case of the Neumann boundary
conditions.
By multiplying by a cut-off function the third term in the right
hand side of (\ref{z}), one may obtain an $O(\sqrt{\varep})$ estimate of
$\| u_\varep-u_0\|_{L^2(\Omega)}$,
regardless of the boundary condition \cite{bensoussan-1978, Jikov-1994}.
As far as we know, the only other known result is contained in \cite{Moskow-Vogelius-2},
where the estimate $\|u_\varep -u_0\|_{L^2(\Omega)} \le C \varep
\| u_0\|_{H^2(\Omega)}$ was proved in a curvilinear convex polygon $\Omega$
in $\mathbb{R}^2$.
In Section 5 we prove estimate (\ref{main-Dirichlet-estimate-1})
in bounded Lipschitz domains in $\brd$, $d\ge 2$
for the Neumann boundary conditions.
The proof uses an explicit computation
of the conormal derivative $\frac{\partial w_\varep}{\partial\nu_\varep}$
on $\partial\Omega$
and relies on the uniform estimates for the $L^2$ Neumann problem in \cite{Kenig-Shen-1,Kenig-Shen-2}.
The proof of estimate (\ref{main-Neumann-estimate-2}), which is given in Section 6
and also uses estimates for the $L^2$ Neumann problem in \cite{Kenig-Shen-1,Kenig-Shen-2},
is similar to that of (\ref{main-Dirichlet-estimate-2}).
It is interesting to point out that in this case
the function $v_\varep$, which replaces
$u_0$ in (\ref{z}), is a solution to the Dirichlet problem for $\mathcal{L}_0$ in $\Omega_\varep$,
with boundary data given by a push-forward of $u_0|_{\partial\Omega}$.

By a spectral theorem found in \cite{Jikov-1994}, the $L^2$ error estimates of $u_\varep-u_0$
in Theorems \ref{main-Dirichlet-theorem}-\ref{main-Neumann-theorem}
lead to error estimates for eigenvalues of $\{\mathcal{L}_\varep\}$.
For $\varep\ge 0$, let $\{\mu_\varep^k\}$
 denote the sequence of Neumann eigenvalues in an increasing order
of $\{\mathcal{L}_\varep\}$ in $\Omega$.
We will show in Section 7 that $|\mu_\varep^k -\mu_0^k|\le C_k\, \varep$ if
$\Omega$ is $C^{1,1}$ (or convex in the case $m=1$), and $|\mu_\varep^k -\mu_0^k|\le C_{k, \sigma}\,
 \varep |\ln (\varep)|^{\frac12+\sigma}$ for any $\sigma>0$ if 
$\Omega$ is Lipschitz.
The same holds for Dirichlet and Steklov eigenvalues.
To the best of the authors' knowledge,
 only results for Dirichlet eigenvalues in smooth domains
\cite{Jikov-1994, Moskow-Vogelius-1} and Neumann eigenvalues
in a two-dimensional curvilinear convex polygon \cite{Moskow-Vogelius-2}
were previously known (see \cite{Kesavan-1, Kesavan-2, Santosa-Vogelius, Santosa-Vogelius-erratum}
for related homogenized eigenvalue problems).

Finally, in Section 8, we prove several weighted $L^2$ potential estimates,
 which are used in earlier sections,
for the operators $\{ \mathcal{L}_\varep\}$.
Our proofs use asymptotic estimates of the fundamental solutions for $\mathcal{L}_\varep$
in \cite{AL-1991} as well as some classical results from harmonic analysis. 

The summation convention is used throughout this paper.
Unless otherwise stated, we always assume that $A\in \Lambda (\mu, \lambda, \tau)$,
$A^*=A$, and $\Omega$ is a bounded Lipschitz domain in $\br^d$, $d\ge 2$.
Without loss of generality we will also assume that diam$(\Omega)=1$.
We will use $C$ and $c$ to denote positive constants that depend at most
on $d$, $m$, $\mu$, $\lambda$, $\tau$ and the Lipschitz character of $\Omega$.

\section{Uniform regularity estimates}

In this section we recall several uniform regularity estimates
for $\{\mathcal{L}_\varep\}$, on which the proofs of our main results rely.
We also give definitions of the non-tangential maximal function and radial maximal
operator $\mathcal{M}$.

Let $u_\varep$ be a weak solution of $\mathcal{L}_\varep (u_\varep)=0$ in $\Omega$.
Then if $B(x,2r)\subset \Omega$,
\begin{equation}\label{interior-estimate}
|\nabla u_\varep (x)|\le
\frac{C}{r^{d+1}}
\int_{B(x,r)} |u_\varep (y)|\, dy.
\end{equation}
This uniform gradient estimate was proved in \cite{AL-1987} (the symmetry 
condition $A^*=A$ is not needed for this).
Let $\Gamma_\varep (x,y)=\big(\Gamma_\varep^{\alpha\beta}(x,y)\big)$
denote the fundamental solution matrix for $\mathcal{L}_\varep$
in $\brd$, with pole at $y$.
It follows from the gradient estimate (\ref{interior-estimate}) that
$|\Gamma_\varep (x,y)|\le C |x-y|^{2-d}$,
$|\nabla_x \Gamma_\varep (x,y)|+|\nabla_y \Gamma_\varep (x,y)|\le C |x-y|^{1-d}$ and
$|\nabla_x \nabla_y \Gamma_\varep (x,y)|\le C |x-y|^{-d}$ (see \cite{AL-1991}).

For a function $u$ in a bounded Lipschitz domain $\Omega$, the non-tangential
maximal function $(u)^*$ on $\partial\Omega$ is defined by
\begin{equation}\label{definition-nontangential-max}
(u)^* (Q)
=\sup \big\{ |u(x)|:\ x\in \Omega \text{ and }
|x-Q|< C_0\, \text{\rm dist} (x, \partial\Omega) \big\},
\end{equation}
where $C_0$, depending on $d$ and the Lipschitz character of $\Omega$, is sufficiently large.

\begin{thm}\label{Dirichlet-problem-theorem}
Let $f\in L^2(\partial\Omega)$ and $u_\varep$ be the unique solution of the
$L^2$ Dirichlet problem:
$\mathcal{L}_\varep (u_\varep)=0$ in $\Omega$, $u_\varep = f$ 
non-tangentially on $\partial\Omega$
and $(u_\varep)^*\in L^2(\partial\Omega)$. Then
\begin{equation}\label{Dirichlet-problem-estimate}
\| (u_\varep)^*\|_{L^2(\partial\Omega)}
+\| u_\varep\|_{H^{1/2}(\Omega)}
+\left\{ \int_{\Omega}
|\nabla u_\varep (x)|^2\, \delta(x)\, dx\right\}^{1/2}
\le C \| f\|_{L^2(\partial\Omega)},
\end{equation}
where $\delta (x)=\text{\rm dist}(x,\partial\Omega)$.
Furthermore, if $f\in H^1(\partial\Omega)$, the solution satisfies
the estimate
$\|(\nabla u_\varep)^*\|_{L^2(\partial\Omega)} \le C \| f\|_{H^1(\partial\Omega)}$.
\end{thm}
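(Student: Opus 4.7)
The plan is to recognize that the three norms in \eqref{Dirichlet-problem-estimate} are controlled by a single fundamental estimate: the uniform $L^2$ Dirichlet estimate $\|(u_\varep)^*\|_{L^2(\partial\Omega)}\le C\|f\|_{L^2(\partial\Omega)}$ established in \cite{Kenig-Shen-2} under the hypotheses $A\in\Lambda(\mu,\lambda,\tau)$ and $A^*=A$. With that estimate in hand, the weighted gradient bound and the $H^{1/2}(\Omega)$ bound follow from general facts about harmonic-type functions on Lipschitz domains together with the interior gradient estimate \eqref{interior-estimate}, and the final $H^1\!\to\!L^2$ gradient estimate for $f\in H^1(\partial\Omega)$ is the $L^2$ regularity result, also from \cite{Kenig-Shen-2}.

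The main step is the weighted gradient inequality
$\int_\Omega |\nabla u_\varep(x)|^2\,\delta(x)\,dx \le C\|(u_\varep)^*\|_{L^2(\partial\Omega)}^2$.
I would derive it from the uniform pointwise interior bound
$|\nabla u_\varep(x)|^2 \le C\delta(x)^{-d-2}\!\int_{B(x,\delta(x)/4)} |u_\varep|^2$,
which is a consequence of \eqref{interior-estimate} applied at scale $r=\delta(x)/4$ (so the symmetry hypothesis is not needed here). Multiplying by $\delta(x)$, integrating in $x\in\Omega$, and applying Fubini to interchange the order of integration gives, for any $y\in\Omega$, a weight of size $\delta(y)^{-d-1}\cdot|\{x:y\in B(x,\delta(x)/4)\}|\cdot\delta(x) \lesssim \delta(y)^{-d+1}\cdot\delta(y)=\delta(y)^{-d+2}$ on $|u_\varep(y)|^2$. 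After translating this interior average to a non-tangential cone anchored on $\partial\Omega$ and using the standard Lipschitz-domain inequality $\int_\Omega \delta(y)^{-d+2} g(y)^2\,dy \le C\int_{\partial\Omega} (g)^*(Q)^2\,d\sigma(Q)$ (which follows from writing $y=Q+t\nu$ and Fubini), I obtain the weighted gradient estimate in terms of $\|(u_\varep)^*\|_{L^2(\partial\Omega)}^2$.

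For the $H^{1/2}(\Omega)$ bound I would use the classical interpolation inequality, valid in any bounded Lipschitz domain,
\begin{equation*}
\|v\|_{H^{1/2}(\Omega)}^2 \le C\left(\|v\|_{L^2(\Omega)}^2 + \int_\Omega |\nabla v(x)|^2\,\delta(x)\,dx\right),
\end{equation*}
applied to $v=u_\varep$. The $L^2(\Omega)$ norm of $u_\varep$ is itself controlled by $\|(u_\varep)^*\|_{L^2(\partial\Omega)}$ via another radial integration (writing points of $\Omega$ in terms of a foliation by parallel surfaces and estimating pointwise by $(u_\varep)^*$). Combined with the weighted gradient bound just obtained, this gives $\|u_\varep\|_{H^{1/2}(\Omega)} \le C\|f\|_{L^2(\partial\Omega)}$. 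Finally, the estimate $\|(\nabla u_\varep)^*\|_{L^2(\partial\Omega)}\le C\|f\|_{H^1(\partial\Omega)}$ is the $L^2$ regularity estimate proved in \cite{Kenig-Shen-2} (again using $A=A^*$), so it may be quoted directly.

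The main obstacle, and the only substantive one, is uniformity in $\varep$. All classical harmonic-analysis reductions above are $\varep$-free, so the burden is placed entirely on the input estimates from \cite{Kenig-Shen-1,Kenig-Shen-2}: those give the $L^2$ Dirichlet and regularity bounds with constants depending only on $d$, $m$, $\mu$, $\lambda$, $\tau$ and the Lipschitz character of $\Omega$, and the symmetry assumption $A^*=A$ is used there to ensure solvability at the $L^2$ endpoint. Once that uniformity is invoked, the passage to the three combined norms in \eqref{Dirichlet-problem-estimate} is routine.
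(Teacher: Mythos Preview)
Your plan for the non-tangential maximal function estimate and for the regularity estimate $\|(\nabla u_\varep)^*\|_{L^2}\le C\|f\|_{H^1}$ is correct and matches the paper: both are quoted from \cite{Kenig-Shen-1,Kenig-Shen-2}. Your treatment of the $H^{1/2}$ norm, deducing it from the square function bound via the inequality $\|v\|_{H^{1/2}(\Omega)}^2\le C\big(\|v\|_{L^2(\Omega)}^2+\int_\Omega|\nabla v|^2\delta\big)$, is also fine and is essentially what the paper does (it cites \cite{Jerison-Kenig-1995} for this real-interpolation fact).

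The gap is in your derivation of the square function bound. After multiplying the interior estimate by $\delta(x)$ and applying Fubini, the weight you obtain on $|u_\varep(y)|^2$ is
\[
\int_{\{x:\,|x-y|<\delta(x)/4\}}\delta(x)^{-d-1}\,dx\;\sim\;\delta(y)^{-d-1}\cdot\delta(y)^d\;=\;\delta(y)^{-1},
\]
not $\delta(y)^{-d+2}$; you have used the factor $\delta(x)$ twice. With the correct weight one is left with $\int_\Omega|u_\varep(y)|^2\,\delta(y)^{-1}\,dy$, and this quantity is \emph{not} controlled by $\|(u_\varep)^*\|_{L^2(\partial\Omega)}^2$: foliating $\Omega$ by the surfaces $\partial\Omega_t$ and bounding $|u_\varep|$ by $(u_\varep)^*$ produces the divergent integral $\int_0^c t^{-1}\,dt$. (For the same reason your ``standard inequality'' with weight $\delta^{-d+2}$ fails for every $d\ge 3$.) The inequality $\int_\Omega|\nabla u_\varep|^2\delta\le C\|(u_\varep)^*\|_{L^2(\partial\Omega)}^2$ is genuinely the $L^2$ square function estimate and cannot be recovered from interior gradient bounds alone; some use of the equation beyond \eqref{interior-estimate} is required.

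The paper handles this differently: it invokes the result of \cite{Kenig-Shen-2} that the solution is a double layer potential $u_\varep=\mathcal{D}_\varep(g_\varep)$ with $\|g_\varep\|_{L^2(\partial\Omega)}\le C\|f\|_{L^2(\partial\Omega)}$, and then applies Proposition~\ref{square-prop}, whose proof (in Section~8) uses the asymptotic expansion of $\nabla_x\nabla_y\Gamma_\varep$ from \cite{AL-1991} together with the known constant-coefficient square function estimate and a local Rellich-type bound. That machinery is what supplies the missing ingredient in your argument.
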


\begin{proof}
The non-tangential maximal function estimate $\|(u_\varep)^*\|_{L^2(\partial\Omega)}
\le \| f\|_{L^2(\partial\Omega)}$ in Lipschitz domains
was proved in \cite{Dahlberg-personal} for $m=1$ and in \cite{Kenig-Shen-2} for $m\ge 1$.
In the case of smooth domains, the estimate was obtained earlier in \cite{AL-1987-ho,AL-1987}.
The proof of $\|(\nabla u_\varep)^*\|_{L^2(\partial\Omega)}
\le C \| f\|_{H^1(\partial\Omega)}$ may be found in \cite{Kenig-Shen-1}
for $m=1$ and in \cite{Kenig-Shen-2} for $m\ge 1$.

It was also proved in \cite{Kenig-Shen-2} that the solution of the Dirichlet problem
with boundary data $f$ in $L^2(\partial\Omega)$
 is given by a double layer potential $\mathcal{D}_\varep (g_\varep)$,
where the density $g_\varep$ satisfies $\| g_\varep\|_{L^2(\partial\Omega)}
\le C \| f\|_{L^2(\partial\Omega)}$.
This, together with Proposition \ref{square-prop}, gives the square function estimate 
in (\ref{Dirichlet-problem-estimate}),
$$
\left\{ \int_{\Omega}
|\nabla u_\varep (x)|^2\, \delta(x)\, dx\right\}^{1/2}
\le C\| g_\varep\|_{L^2(\partial\Omega)}
\le C \| f\|_{L^2(\partial\Omega)}.
$$
Finally, the estimate $\|u_\varep\|_{H^{1/2}(\Omega)}\le C\| f\|_{L^2(\partial\Omega)}$
follows from the square function estimate 
by real interpolation (see e.g. \cite[pp.181-182]{Jerison-Kenig-1995}).
\end{proof}

The next theorem was proved in \cite{Kenig-Shen-2} (the case $m=1$ was obtained
in \cite{Kenig-Shen-1}).
We refer the reader to \cite{Kenig-book,Kenig-Shen-1,Kenig-Shen-2} 
for references on $L^p$ boundary value problems 
in Lipschitz domains in non-homogenized settings.

\begin{thm}\label{Neumann-problem-theorem}
Let $g\in L^2(\partial\Omega)$ with $\int_{\partial\Omega} g=0$.
Let $u_\varep\in H^{1}(\Omega)$ be the unique (up to an additive constant)
weak solution of the $L^2$ Neumann problem:
$\mathcal{L}_\varep (u_\varep)=0$ in $\Omega$,
$\frac{\partial u_\varep}{\partial\nu_\varep}=g$ on $\partial\Omega$.
Then $\|(\nabla u_\varep)^*\|_{L^2(\partial\Omega)} \le C \| g\|_{L^2(\partial\Omega)}$.
\end{thm}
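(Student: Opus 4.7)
The plan is to represent the Neumann solution via a single layer potential and reduce the theorem to a uniform invertibility statement for the associated boundary integral operator. Let
\[
\mathcal{S}_\varep(h)(x) = \int_{\partial\Omega} \Gamma_\varep(x,y)\, h(y)\, d\sigma(y), \qquad x \in \brd \setminus \partial\Omega.
\]
Using the pointwise estimates $|\nabla_x \Gamma_\varep| \le C|x-y|^{1-d}$ and $|\nabla_x\nabla_y \Gamma_\varep| \le C|x-y|^{-d}$ recalled in Section~2, together with the $L^2$ boundedness of Calder\'on--Zygmund operators on Lipschitz graphs, I would show that $\mathcal{S}_\varep$ satisfies the non-tangential maximal bound $\|(\nabla \mathcal{S}_\varep(h))^*\|_{L^2(\partial\Omega)} \le C\|h\|_{L^2(\partial\Omega)}$ with $C$ independent of $\varep$, and the jump relation
\[
\frac{\partial}{\partial \nu_\varep} \mathcal{S}_\varep(h)\Big|_{-} = \left( -\tfrac{1}{2}I + \mathcal{K}_\varep^* \right)(h)
\]
holds a.e.\ on $\partial\Omega$, where $\mathcal{K}_\varep^*$ is the principal-value singular integral associated with $\nabla_y \Gamma_\varep$ and is bounded on $L^2(\partial\Omega)$.

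Given $g \in L^2(\partial\Omega)$ with mean zero, the theorem then follows once one produces a density $h$ with $\|h\|_{L^2(\partial\Omega)} \le C\|g\|_{L^2(\partial\Omega)}$ solving $(-\tfrac{1}{2}I + \mathcal{K}_\varep^*)(h) = g$, because then $u_\varep = \mathcal{S}_\varep(h)$ is the desired solution and the maximal function bound on $\mathcal{S}_\varep$ closes the estimate. So the whole theorem reduces to proving invertibility of $-\tfrac{1}{2}I + \mathcal{K}_\varep^*$ on $L^2_0(\partial\Omega) := \{h \in L^2(\partial\Omega) : \int_{\partial\Omega} h = 0\}$ with an inverse bounded uniformly in $\varep$. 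My approach would be a Rellich-type identity: choose a $C^1$ vector field $\beta$ with $\beta \cdot n \ge c_0 > 0$ on $\partial\Omega$, and apply $\beta \cdot \nabla u_\varep$ as a test vector in the energy identity for $u_\varep = \mathcal{S}_\varep(h)$. Exploiting the symmetry $A = A^*$ to symmetrize the boundary terms, integration by parts gives an identity of the form
\[
\int_{\partial\Omega} (\beta \cdot n)\, a_{ij}^{\alpha\beta}(x/\varep)\, \partial_j u_\varep^\beta\, \partial_i u_\varep^\alpha\, d\sigma
= 2 \int_{\partial\Omega} (\beta \cdot \nabla u_\varep) \cdot \tfrac{\partial u_\varep}{\partial \nu_\varep}\, d\sigma + (\text{interior remainder}).
\]
The interior remainder is $O(\|\nabla u_\varep\|_{L^2(\Omega)}^2)$, which is absorbed by trace and Caccioppoli estimates, and a standard argument yields $\|\nabla u_\varep\|_{L^2(\partial\Omega)} \approx \|\tfrac{\partial u_\varep}{\partial \nu_\varep}\|_{L^2(\partial\Omega)}$ modulo lower-order terms. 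Combining this with the $L^2$ Dirichlet estimate from Theorem~\ref{Dirichlet-problem-theorem} produces the needed coercive lower bound $\|(-\tfrac{1}{2}I + \mathcal{K}_\varep^*)(h)\|_{L^2(\partial\Omega)} \ge c\|h\|_{L^2(\partial\Omega)}$ on $L^2_0(\partial\Omega)$, and a parallel bound for $-\tfrac{1}{2}I + \mathcal{K}_\varep$ on $L^2(\partial\Omega)/\mathbb{R}$ gives surjectivity by duality.

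The main obstacle is making the Rellich identity work uniformly in $\varep$, because the conormal derivative $\frac{\partial}{\partial \nu_\varep}$ itself oscillates and the pointwise product $a_{ij}^{\alpha\beta}(x/\varep) \partial_j u_\varep^\beta \partial_i u_\varep^\alpha$ cannot be handled by pointwise comparison with the limit form. The symmetry $A = A^*$ is essential here: it forces the symmetric quadratic form on the boundary and eliminates the skew terms that would otherwise require a corrector expansion. A secondary difficulty is that invertibility of $-\tfrac{1}{2}I + \mathcal{K}_\varep^*$ at the level of Fredholm alternative requires a continuity/connectedness argument along the family $\{\varep\}$ or along a family of Lipschitz domains; I would close this by first establishing the estimates on smooth domains via the perturbation from constant coefficients given by the freezing argument of \cite{AL-1987}, and then extending to Lipschitz domains by the localization and Lipschitz-graph techniques as in \cite{Kenig-Shen-1,Kenig-Shen-2}, where the uniform bound on the inverse propagates because the coercivity constant depends only on $d$, $m$, $\mu$, and the Lipschitz character of $\Omega$.
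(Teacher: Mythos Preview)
The paper does not actually prove this theorem: immediately before the statement it says ``The next theorem was proved in \cite{Kenig-Shen-2} (the case $m=1$ was obtained in \cite{Kenig-Shen-1}),'' and no argument is given. So there is no in-paper proof to compare against; your outline is to be measured against the strategy of those references, which is indeed layer potentials plus a Rellich-type estimate plus invertibility of $-\tfrac12 I+\mathcal{K}_\varep^*$, exactly as you propose.

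That said, your sketch contains a real gap at the Rellich step. When you write the identity
\[
\int_{\partial\Omega} (\beta\cdot n)\, a_{ij}^{\alpha\beta}(x/\varep)\,\partial_j u_\varep^\beta\,\partial_i u_\varep^\alpha\, d\sigma
= 2\int_{\partial\Omega} (\beta\cdot\nabla u_\varep)\cdot\frac{\partial u_\varep}{\partial\nu_\varep}\, d\sigma + (\text{interior remainder}),
\]
the interior remainder necessarily contains the term
\[
\int_\Omega \beta_k\,\big(\partial_{x_k} a_{ij}^{\alpha\beta}(x/\varep)\big)\,\partial_i u_\varep^\alpha\,\partial_j u_\varep^\beta\, dx
=\frac{1}{\varep}\int_\Omega \beta_k\,(\partial_k a_{ij}^{\alpha\beta})(x/\varep)\,\partial_i u_\varep^\alpha\,\partial_j u_\varep^\beta\, dx,
\]
which is \emph{not} $O(\|\nabla u_\varep\|_{L^2(\Omega)}^2)$ uniformly in $\varep$; it carries an explicit factor $\varep^{-1}$. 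Symmetry $A=A^*$ is what makes the boundary quadratic form appear in the first place, but it does nothing to kill this interior term. So your diagnosis that the obstacle lies in the oscillation of the boundary conormal is misplaced: the genuine enemy is the $\varep^{-1}$ in the bulk, and it cannot be ``absorbed by trace and Caccioppoli estimates.''

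In \cite{Kenig-Shen-1,Kenig-Shen-2} this is precisely the point that requires work. The Rellich estimate $\|\nabla u_\varep\|_{L^2(\partial\Omega)}\le C\big(\|\partial u_\varep/\partial\nu_\varep\|_{L^2(\partial\Omega)}+\|\nabla u_\varep\|_{L^2(\Omega)}\big)$ is not obtained from a single global integration by parts; one instead exploits the scale-invariance of the problem (rescaling $x\mapsto x/\varep$ turns $\mathcal{L}_\varep$ into $\mathcal{L}_1$ and the Lipschitz constant is preserved), together with localization and the already-established uniform Dirichlet/regularity estimates, to bypass the bad bulk term. Your outline would be fine once you replace the one-line Rellich claim by this mechanism; as written, the step fails.
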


\noindent
{\it The radial maximal operator.}
Given a bounded Lipschitz domain $\Omega$, one may construct a
continuous family $\{ \Omega_t, -c<t<c\}
$ of Lipschitz domains
with uniform Lipschitz characters such that $\Omega_0=\Omega$
and $\overline{\Omega_t}\subset \Omega_s$ for $t<s$.
We may further assume that there exist homeomorphisms $\Lambda_t: \partial\Omega
\to \partial\Omega_t$ such that $\Lambda_0(Q)=Q$,
$|\Lambda_t (Q)-\Lambda_s(P)|\sim |t-s|+|P-Q|$
and $|\Lambda_s (Q)-\Lambda_t (Q)|
\le C_0 \text{dist}(\Lambda_s(Q), \partial\Omega_t)$ for any $t<s$
(see e.g. \cite{Verchota-thesis}).
For a function $u$ in $\Omega$,  the radial maximal function $\mathcal{M}(u)$ on
$\partial\Omega$ is defined by
\begin{equation}\label{definition-of-radial-max}
\mathcal{M}(u)(Q)
=\sup \big\{
|u(\Lambda_t(Q))|:\ -c<t<0 \big\}.
\end{equation}
Observe that $\mathcal{M}(u)(Q)\le (u)^*(Q)$ and
if $S\subset\Omega$ is a surface near $\partial\Omega$ and obtained from $\partial\Omega$
by a bi-Lipschitz map, then $\|u\|_{L^2(S)} \le C\| \mathcal{M} (u)\|_{L^2(\partial\Omega)}$.
Also, note that
$\| u\|_{L^2(\Omega)}
+\| \mathcal{M} (u)\|_{L^2(\partial\Omega)}
\le C \| (u)^*\|_{L^2(\partial\Omega)}$,
and the converse holds if $u$ satisfies the
interior $L^\infty$ estimate $\|u\|_{L^\infty(B)}
\le C|2B|^{-1}\| u\|_{L^1(2B)}$ for any $2B\subset \Omega$.
In particular, if $\mathcal{L}_\varep (u_\varep)=0$ in $\Omega$,
then $\|(u_\varep)^*\|_{L^2(\partial\Omega)}
\approx \| u_\varep\|_{L^2(\Omega)} +\| \mathcal{M}(u_\varep)\|_{L^2(\partial\Omega)}$.

\section{Homogenization of elliptic systems}

Let $\mathcal{L}_\varep=-\text{\rm div} (A(x/\varep)\nabla )$ with $A(y)$ satisfying
(\ref{ellipticity})-(\ref{periodicity}).
The matrix of correctors $\chi (y)=(\chi_j^{\alpha\beta}(y))$ for $\{ \mathcal{L}_\varep\}$
is defined by the following cell problem:
\begin{equation}\label{cell-problem}
\left\{
\aligned
& \frac{\partial}{\partial y_i}
\left[ a_{ij}^{\alpha\beta}
+a_{ik}^{\alpha\gamma}
\frac{\partial}{\partial y_k}\left( \chi_j^{\gamma\beta}\right)\right]=0
\quad
\text{ in }\brd,\qquad \alpha=1,\dots, m,\\
&\chi_j^{\alpha\beta} (y) \text{ is periodic with respect to }\mathbb{Z}^d,\\
& \int_Y
\chi_j^{\alpha\beta} \, dy =0, 
\endaligned
\right.
\end{equation}
for each $1\le j\le d$ and $1\le \beta\le m$,
where $Y=[0,1)^d\simeq \brd/\mathbb{Z}^d$.
The homogenized operator is given by
 $\mathcal{L}_0=-\text{div}(\hat{A}\nabla)$, where 
$\hat{A} =(\hat{a}_{ij}^{\alpha\beta})$ and
\begin{equation}
\label{homogenized-coefficient}
\hat{a}_{ij}^{\alpha\beta}
=\int_Y
\left[ a_{ij}^{\alpha\beta}
+a_{ik}^{\alpha\gamma}
\frac{\partial}{\partial y_k}\left( \chi_j^{\gamma\beta}\right)\right]
\, dy
\end{equation}
(see \cite{bensoussan-1978}).

\begin{lemma}\label{divergence-lemma}
Let $F=(F_1, \dots, F_d)\in L^2(Y)$. 
Suppose that $\int_Y F_j\, dy=0$ and $\text{\rm div}(F)=0$.
Then there exist $w_{ij}\in H^1(Y)$ such that
$w_{ij}=-w_{ji}$ and $F_j =\frac{\partial w_{ij}}{\partial y_i}$.
\end{lemma}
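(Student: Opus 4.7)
The plan is to construct $w_{ij}$ explicitly from a vector-valued Laplace potential on the torus. The skew-symmetric structure suggests the ansatz $w_{ij} = \partial_i \phi_j - \partial_j \phi_i$, and the divergence-free hypothesis on $F$ is exactly what is needed to make this work.

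Concretely, first I would solve the Poisson problem componentwise on the flat torus $Y = \br^d/\mathbb{Z}^d$: since $\int_Y F_j \, dy = 0$, for each $j$ there is a unique $\phi_j \in H^2(Y)$ with
\[
\Delta \phi_j = F_j \quad \text{in } Y, \qquad \int_Y \phi_j \, dy = 0.
\]
Existence and the $H^2$-regularity follow from the Fourier-series representation $\hat{\phi}_j(k) = -(4\pi^2|k|^2)^{-1}\hat{F}_j(k)$ for $k \in \mathbb{Z}^d \setminus \{0\}$, together with $F_j \in L^2(Y)$.

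Next I would set $w_{ij} := \partial_i \phi_j - \partial_j \phi_i$. By construction $w_{ij} \in H^1(Y)$ and $w_{ij} = -w_{ji}$. Computing the divergence in the first index,
\[
\partial_i w_{ij} = \Delta \phi_j - \partial_j(\partial_i \phi_i) = F_j - \partial_j \bigl(\diverg{\phi}\bigr).
\]
The point now is that $\diverg{\phi}$ is constant on $Y$: indeed,
\[
\Delta(\diverg{\phi}) = \partial_i (\Delta \phi_i) = \partial_i F_i = 0,
\]
so $\diverg{\phi}$ is a periodic harmonic function on $\br^d$, hence constant. Therefore $\partial_j(\diverg{\phi}) = 0$ and $\partial_i w_{ij} = F_j$, completing the construction.

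The argument is essentially a Hodge-type decomposition, and there is no real obstacle once the ansatz is in place; the only thing to check is that the divergence-free hypothesis on $F$ is used in exactly the right spot to kill the $\partial_j(\diverg{\phi})$ term. An equivalent Fourier-space proof would directly define $\hat{w}_{ij}(k) = (2\pi i |k|^2)^{-1}(k_i \hat{F}_j(k) - k_j \hat{F}_i(k))$ for $k \neq 0$ and verify skew-symmetry, the identity $\partial_i w_{ij} = F_j$ (using $k_i \hat{F}_i(k)=0$), and membership in $H^1(Y)$ (since $|k|\hat{w}_{ij}(k)$ lies in $\ell^2(\mathbb{Z}^d)$); but the potential-theoretic formulation above is cleaner.
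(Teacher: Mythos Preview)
Your proof is correct and is essentially identical to the paper's own argument: the paper also solves $\Delta f_j = F_j$ on the torus, sets $w_{ij} = \partial_i f_j - \partial_j f_i$, and uses $\diverg{F}=0$ to conclude that $\partial_i f_i$ is constant. Your write-up simply fills in a few more details (Fourier existence, the explicit computation of $\partial_i w_{ij}$) than the paper does.
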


\begin{proof}
Let $f_j\in H^2(Y)$ be the solution to the cell problem:
$\Delta f_j =F_j$ in $Y$, $f_j$ is periodic with respect to $\mathbb{Z}^d$ and
$\int_Y f_j\, dy=0$.
Since $\text{div}(F)=0$, we may deduce that $\frac{\partial f_i}{\partial y_i}$ is constant.
From this it is easy to see that 
$$
w_{ij}=\frac{\partial f_j}{\partial y_i}-\frac{\partial f_i}{\partial y_j}
$$
has the desired properties.
\end{proof}

Let
\begin{equation}\label{definition-of-Phi}
\Phi_{ij}^{\alpha\beta} (y) 
=\hat{a}^{\alpha\beta}_{ij}  -a_{ij}^{\alpha\beta} (y) 
-a_{ik}^{\alpha\gamma} (y) \frac{\partial}{\partial y_k}
\big( \chi_j^{\gamma\beta}\big).
\end{equation}
It follows from (\ref{homogenized-coefficient}) and (\ref{cell-problem}) that
$$
\int_Y \Phi_{ij}^{\alpha\beta} dy=0
\quad
\text{ and }\quad
\frac{\partial}{\partial y_i} \big( \Phi_{ij}^{\alpha\beta} \big) =0 
\quad \text{in } \brd.
$$
Hence we may apply Lemma \ref{divergence-lemma} to $\Phi_{ij}^{\alpha\beta} (y)$
(with $\alpha, \beta, j$ fixed). 
This gives $\Psi_{kij}^{\alpha\beta}\in H^1(Y)$, where $1\le i,j,k\le d$ and
$1\le \alpha,\beta\le m$, with the property that
\begin{equation}\label{definition-of-Psi}
\Phi_{ij}^{\alpha \beta}
=\frac{\partial}{\partial y_k} \big\{ \Psi_{kij}^{\alpha\beta} \big\}
\quad \text{ and }\quad 
\Psi_{kij}^{\alpha\beta}=-\Psi_{ikj}^{\alpha\beta}.
\end{equation}
Furthermore, it follows from the proof of
Lemma \ref{divergence-lemma} that 
if $\chi\in W^{1, p}(Y)$ for some $p>d$, then $\Psi \in L^\infty (Y)$.

The next lemma is more or less known (see e.g. \cite[Chapter 1]{Jikov-1994} for
the case $m=1$ and $v_\varep=u_0$). 
We provide the proof for the sake of completeness.

\begin{lemma}\label{homo-lemma}
Let $u^\alpha_\varep\in H^1 (\Omega)$, $v^\alpha_\varep \in H^2(\Omega)$ and
\begin{equation}\label{definition-of-w}
w_\varep^\alpha (x) = u_\varep^\alpha (x) -v_\varep^\alpha (x) -\varep \chi_k^{\alpha\beta}
(x/\varep) \frac{\partial v_\varep^\beta}{\partial x_k},
\end{equation}
where $1\le \alpha\le m$.
Suppose that $\mathcal{L}_\varep (u_\varep)=\mathcal{L}_0 (v_\varep)$ in $\Omega$.
Then
\begin{equation}\label{equation-for-w}
\big( \mathcal{L}_\varep (w_\varep )\big)^\alpha
=\varep \frac{\partial}{\partial x_i}
\left\{ b_{ijk}^{\alpha\gamma} \big(x/\varep) \frac{\partial^2 v_\varep^\gamma}
{\partial x_j \partial x_k}\right\},
\end{equation}
where
\begin{equation}\label{definition-of-b}
b_{ijk}^{\alpha\gamma} (y)
=\Psi_{jik}^{\alpha\gamma} (y)+a_{ij}^{\alpha\beta} (y) \chi_k^{\beta\gamma} (y)
\end{equation}
and $\Psi_{jik}^{\alpha\gamma} (y)$ is given in (\ref{definition-of-Psi}).
\end{lemma}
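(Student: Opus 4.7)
The plan is a direct computation using linearity of $\mathcal{L}_\varep$. Write $\phi_\varep^\alpha(x)=\varep\chi_k^{\alpha\beta}(x/\varep)\partial_k v_\varep^\beta$ so that $w_\varep=u_\varep-v_\varep-\phi_\varep$. Using the hypothesis $\mathcal{L}_\varep(u_\varep)=\mathcal{L}_0(v_\varep)$, the term $\mathcal{L}_\varep(u_\varep)^\alpha$ becomes $-\partial_i[\hat a_{ij}^{\alpha\beta}\partial_j v_\varep^\beta]$. The term $\mathcal{L}_\varep(v_\varep)^\alpha=-\partial_i[a_{ij}^{\alpha\beta}(x/\varep)\partial_j v_\varep^\beta]$ is immediate, and by the product rule
\[
\mathcal{L}_\varep(\phi_\varep)^\alpha=-\partial_i\Bigl[a_{ij}^{\alpha\sigma}(x/\varep)\bigl(\partial_{y_j}\chi_k^{\sigma\beta}\bigr)(x/\varep)\,\partial_k v_\varep^\beta\Bigr]-\varep\,\partial_i\Bigl[a_{ij}^{\alpha\sigma}(x/\varep)\chi_k^{\sigma\beta}(x/\varep)\,\partial_j\partial_k v_\varep^\beta\Bigr].
\]

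The three ``zeroth-order in $\varep$'' pieces coming from $\mathcal{L}_\varep(u_\varep)-\mathcal{L}_\varep(v_\varep)-\mathcal{L}_\varep(\phi_\varep)$ then combine into
\[
-\partial_i\Bigl\{\bigl[\hat a_{ik}^{\alpha\beta}-a_{ik}^{\alpha\beta}(x/\varep)-a_{ij}^{\alpha\sigma}(x/\varep)(\partial_{y_j}\chi_k^{\sigma\beta})(x/\varep)\bigr]\partial_k v_\varep^\beta\Bigr\}=-\partial_i\bigl\{\Phi_{ik}^{\alpha\beta}(x/\varep)\,\partial_k v_\varep^\beta\bigr\},
\]
by the very definition of $\Phi_{ik}^{\alpha\beta}$ in (\ref{definition-of-Phi}) (after harmless relabeling of dummy indices).

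The only nontrivial step is to turn this last expression into something with an explicit factor of $\varep$. Here I would invoke the antisymmetric potential $\Psi$ constructed in (\ref{definition-of-Psi}): since $\Phi_{ik}^{\alpha\beta}(y)=\partial_{y_j}\Psi_{jik}^{\alpha\beta}(y)$, one has $\Phi_{ik}^{\alpha\beta}(x/\varep)=\varep\,\partial_{x_j}\bigl[\Psi_{jik}^{\alpha\beta}(x/\varep)\bigr]$. Substituting and applying the Leibniz rule,
\[
-\partial_i\bigl\{\Phi_{ik}^{\alpha\beta}(x/\varep)\,\partial_k v_\varep^\beta\bigr\}=-\varep\,\partial_i\partial_j\bigl[\Psi_{jik}^{\alpha\beta}(x/\varep)\,\partial_k v_\varep^\beta\bigr]+\varep\,\partial_i\bigl[\Psi_{jik}^{\alpha\beta}(x/\varep)\,\partial_j\partial_k v_\varep^\beta\bigr].
\]
The first term vanishes: for fixed $\alpha,\beta,k$ the tensor $T_{ij}:=\Psi_{jik}^{\alpha\beta}(x/\varep)\,\partial_k v_\varep^\beta$ is antisymmetric in $(i,j)$ by $\Psi_{jik}=-\Psi_{ikj}$ (which forces the $(i,j)$-antisymmetry needed), so summing $\partial_i\partial_j T_{ij}$ against the symmetric differential operator $\partial_i\partial_j$ gives zero.

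Combining the surviving term with the $\varep$-order piece from $\mathcal{L}_\varep(\phi_\varep)$ yields
\[
\mathcal{L}_\varep(w_\varep)^\alpha=\varep\,\partial_i\Bigl\{\bigl[\Psi_{jik}^{\alpha\beta}(x/\varep)+a_{ij}^{\alpha\sigma}(x/\varep)\chi_k^{\sigma\beta}(x/\varep)\bigr]\partial_j\partial_k v_\varep^\beta\Bigr\}=\varep\,\partial_i\bigl\{b_{ijk}^{\alpha\gamma}(x/\varep)\,\partial_j\partial_k v_\varep^\gamma\bigr\},
\]
which is exactly (\ref{equation-for-w}). The only real obstacle is the antisymmetry argument in step~5; everything else is routine bookkeeping with the product rule and a careful tracking of indices.
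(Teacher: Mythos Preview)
Your argument is essentially identical to the paper's: compute $\mathcal{L}_\varep(w_\varep)$ by linearity, collect the zeroth-order pieces into $-\partial_i\{\Phi_{ik}^{\alpha\gamma}(x/\varep)\partial_k v_\varep^\gamma\}$, rewrite $\Phi$ via $\Psi$, and kill the double divergence by antisymmetry. One index slip to fix: the relation you need for $T_{ij}=\Psi_{jik}^{\alpha\beta}\partial_k v_\varep^\beta$ to be antisymmetric in $(i,j)$ is $\Psi_{jik}^{\alpha\beta}=-\Psi_{ijk}^{\alpha\beta}$ (swap the first two slots, which is exactly what (\ref{definition-of-Psi}) gives), not $\Psi_{jik}=-\Psi_{ikj}$ as written.
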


\begin{proof}
It follows from the assumption $\mathcal{L}_\varep (u_\varep)=\mathcal{L}_0 (v_\varep)$ that
$$
\aligned
\big(\mathcal{L}_\varep (w_\varep)\big)^\alpha
&=-\frac{\partial}{\partial x_i}
\left\{ \big[ \hat{a}^{\alpha\beta}_{ij} -a^{\alpha\beta}_{ij} (x/\varep)\big]
\frac{\partial v_\varep^\beta}{\partial x_j} \right\}+
\frac{\partial}{\partial x_i} \left\{ a_{ij}^{\alpha\beta} (x/\varep)
\frac{\partial}{\partial x_j}
\big[ \chi_k^{\beta\gamma} (x/\varep) \big]\frac{\partial v_\varep^\gamma}{\partial x_k}\right\}\\
&\qquad\qquad
+\varep \frac{\partial}{\partial x_i}
\left\{ a_{ij}^{\alpha\beta} (x/\varep) \chi_k^{\beta\gamma} (x/\varep)
\frac{\partial^2 v_\varep^\gamma}{\partial x_j\partial x_k} \right\}\\
&=
-\frac{\partial}{\partial x_i}
\left\{ \Phi_{ik}^{\alpha\gamma} (x/\varep)\frac{\partial v_\varep^\gamma}
{\partial x_k}\right\}
+\varep \frac{\partial}{\partial x_i}
\left\{ a_{ij}^{\alpha\beta} (x/\varep) \chi_k^{\beta\gamma} (x/\varep)
\frac{\partial^2 v_\varep^\gamma}{\partial x_j\partial x_k} \right\},
\endaligned
$$
where the periodic function
$\Phi_{ik}^{\alpha\gamma}(y)$ is given by (\ref{definition-of-Phi}).
Using the first equation in (\ref{definition-of-Psi}), we obtain
\begin{equation}\label{equation-1}
\aligned
\big(\mathcal{L}_\varep (w_\varep)\big)^\alpha
& =
-\varep \frac{\partial^2}{\partial x_i\partial x_j}
\left\{ \Psi_{jik}^{\alpha\gamma} (x/\varep) \frac{\partial v_\varep^\gamma}{\partial x_k}\right\}\\
& \qquad +\varep \frac{\partial}{\partial x_i}
\left\{ \left(\Psi_{jik}^{\alpha\gamma} (x/\varep) 
+a_{ij}^{\alpha\beta} (x/\varep) \chi_k^{\beta\gamma}(x/\varep)\right) 
\frac{\partial^2 v_\varep^\gamma}{\partial x_j\partial x_k}\right\}.
\endaligned
\end{equation}
By the second equation in (\ref{definition-of-Psi}), 
the first term in the right hand side of (\ref{equation-1}) is zero.
This gives the equation (\ref{equation-for-w}).
\end{proof}

\begin{remark}
{\rm Under the assumption $A\in \Lambda (\mu,\lambda, \tau)$, it is known that
$\nabla\chi$ is H\"older continuous.
This implies that $\nabla \Psi_{ijk}^{\alpha\beta}$ is H\"older continuous.
In particular, $\Psi_{ijk}^{\alpha\beta}$, $b_{ijk}^{\alpha\gamma}
\in L^\infty (Y)$.
Furthermore, $\|\Psi_{ijk}^{\alpha\beta}\|_\infty
+\| b_{ijk}^{\alpha\beta}\|_\infty$ is bounded by a constant depending only on
$m$, $d$, $\mu$, $\lambda$ and $\tau$. }
\end{remark}

Fix $F\in L^2(\Omega)$ and $f\in H^{1/2}(\partial\Omega)$. Let
$u_\varep, \, u_0 \in H^1(\Omega)$ solve
\begin{equation}\label{equation-Dirichlet}
 \left\{ \begin{array}{ll} \mathcal{L}_\varep (u_\varep)=F & \mbox{ in } \Omega, \\
u_\varep= f & \mbox{ on } \partial\Omega,
\end{array}\right.
\qquad \text{ and } \qquad
\left\{ \begin{array}{ll} \mathcal{L}_0 (u_0)=F & \mbox{ in } \Omega, \\
u_0=f & \mbox{ on } \partial\Omega, \end{array}\right.
\end{equation}
respectively.

\begin{thm}\label{D-1-theorem}
Let $\Omega$ be a bounded Lipschitz domain.
Suppose that $A\in \Lambda (\mu, \lambda, \tau)$ and $A^*=A$.
Assume further that $u_0\in H^2(\Omega)$.
Then
\begin{equation}\label{D-1-w}
\| \mathcal{M}(w_\varep)\|_{L^2(\partial\Omega)}
+\|w_\varep\|_{H^{1/2}(\Omega)}
+\left\{ \int_\Omega |\nabla w_\varep (x)|^2 \delta(x)\, dx\right\}^{1/2}
\le C \varep \| u_0\|_{H^2(\Omega)},
\end{equation}
where $w^\alpha_\varep (x)=u_\varep^\alpha (x)-u_0^\alpha (x)
- \varep \chi_k^{\alpha\beta} (x/\varep) \frac{\partial
u_0^\beta}{\partial x_k}$ and $\delta(x)=\text{dist}(x, \partial\Omega)$.
\end{thm}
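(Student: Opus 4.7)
The plan is to follow the classical two-scale expansion decomposition and feed it into the $L^2$ Dirichlet estimates of Theorem~\ref{Dirichlet-problem-theorem}. By Lemma~\ref{homo-lemma} applied with $v_\varep = u_0$, we have
$$
\mathcal{L}_\varep (w_\varep) = \varep\, \text{\rm div}(h_\varep) \text{ in } \Omega,
\qquad h^{\alpha,i}_\varep = b_{ijk}^{\alpha\gamma}(x/\varep)\,\partial_j\partial_k u_0^\gamma,
$$
with $|h_\varep|\le C|\nabla^2 u_0|$ by the remark following Lemma~\ref{homo-lemma}. On the boundary, $u_\varep = u_0 = f$, so
$$
w_\varep\big|_{\partial\Omega} = -\varep\,\chi(\cdot/\varep)\,\nabla u_0\big|_{\partial\Omega}.
$$
I will split $w_\varep = \theta_\varep + z_\varep$, where $\theta_\varep$ carries the boundary data and $z_\varep$ absorbs the interior divergence source.

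\textbf{Step 1: the boundary piece $\theta_\varep$.} Let $\theta_\varep$ solve $\mathcal{L}_\varep(\theta_\varep)=0$ in $\Omega$ with $\theta_\varep = -\varep\chi(\cdot/\varep)\nabla u_0$ on $\partial\Omega$. Since $\chi\in L^\infty$ (by the H\"older regularity of $\nabla\chi$ under $A\in\Lambda(\mu,\lambda,\tau)$) and the trace map sends $H^1(\Omega)\to L^2(\partial\Omega)$, the boundary data satisfies
$$
\big\|\varep\chi(\cdot/\varep)\nabla u_0\big\|_{L^2(\partial\Omega)}
\le C\varep\|\nabla u_0\|_{L^2(\partial\Omega)} \le C\varep \|u_0\|_{H^2(\Omega)}.
$$
Theorem~\ref{Dirichlet-problem-theorem} then gives, at once,
$$
\|(\theta_\varep)^*\|_{L^2(\partial\Omega)}
+\|\theta_\varep\|_{H^{1/2}(\Omega)}
+\Bigl(\int_\Omega|\nabla\theta_\varep|^2\delta\,dx\Bigr)^{1/2}
\le C\varep\|u_0\|_{H^2(\Omega)},
$$
and since $\mathcal{M}(\theta_\varep)\le (\theta_\varep)^*$ pointwise on $\partial\Omega$ this also controls the radial maximal term.

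\textbf{Step 2: the interior piece $z_\varep$.} The function $z_\varep\in H^1_0(\Omega)$ solves $\mathcal{L}_\varep(z_\varep)=\varep\,\text{\rm div}(h_\varep)$. Testing against $z_\varep$ itself and using uniform ellipticity yields the energy bound $\|\nabla z_\varep\|_{L^2(\Omega)}\le C\varep\|\nabla^2 u_0\|_{L^2(\Omega)}$; Poincar\'e then gives the same bound for $\|z_\varep\|_{L^2(\Omega)}$, hence $\|z_\varep\|_{H^{1/2}(\Omega)}\le \|z_\varep\|_{H^1(\Omega)}\le C\varep\|u_0\|_{H^2(\Omega)}$, and trivially $\int_\Omega|\nabla z_\varep|^2\delta\,dx\le \mathrm{diam}(\Omega)\|\nabla z_\varep\|_{L^2}^2 \le C\varep^2\|u_0\|_{H^2}^2$. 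For the radial maximal piece, I exploit $z_\varep|_{\partial\Omega}=0$: with the parametrization $\Lambda_s$ from Section~2, for $-c<t<0$,
$$
z_\varep(\Lambda_t(Q)) = -\int_t^0 \tfrac{d}{ds}z_\varep(\Lambda_s(Q))\,ds,
\qquad
\big|\mathcal{M}(z_\varep)(Q)\big|^2\le C\int_{-c}^0 |\nabla z_\varep(\Lambda_s(Q))|^2\,ds,
$$
by Cauchy-Schwarz. Integrating in $Q\in\partial\Omega$ and making the change of variables $(Q,s)\mapsto \Lambda_s(Q)$ (whose Jacobian is bounded above and below thanks to the bi-Lipschitz property of $\Lambda_s$) produces
$$
\|\mathcal{M}(z_\varep)\|_{L^2(\partial\Omega)}^2 \le C\int_\Omega|\nabla z_\varep|^2\,dx \le C\varep^2\|u_0\|_{H^2(\Omega)}^2.
$$

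\textbf{Step 3: combine.} Adding the bounds for $\theta_\varep$ and $z_\varep$ (both individually controlled by $C\varep\|u_0\|_{H^2(\Omega)}$) yields (\ref{D-1-w}). The only conceptual point that has to be checked carefully is the radial maximal estimate for $z_\varep$: since $z_\varep$ satisfies an inhomogeneous equation, one cannot quote the Dirichlet nontangential maximum estimate directly, which is why I argue by fundamental theorem of calculus along the radial paths $s\mapsto\Lambda_s(Q)$ and pay with the energy bound. Everything else is an immediate application of Theorem~\ref{Dirichlet-problem-theorem}, the energy method, and interpolation.
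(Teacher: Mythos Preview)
Your proof is correct and follows essentially the same route as the paper: split $w_\varep=\theta_\varep+z_\varep$ with $\theta_\varep$ the $\mathcal{L}_\varep$-harmonic extension of the boundary data and $z_\varep\in H^1_0(\Omega)$ absorbing the divergence source, then apply Theorem~\ref{Dirichlet-problem-theorem} to $\theta_\varep$ and the energy estimate to $z_\varep$. Your treatment of $\|\mathcal{M}(z_\varep)\|_{L^2(\partial\Omega)}$ via the fundamental theorem of calculus along the radial paths (exploiting $z_\varep|_{\partial\Omega}=0$) is in fact more explicit than the paper, which simply writes ``Thus'' after the $H^1_0$ bound; both arguments amount to the same thing.
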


Observe that
\begin{equation}\label{equation-4.0}
\aligned
& \| u_\varep -u_0\|_{L^2(\Omega)}
+\| \mathcal{M} (u_\varep-u_0)\|_{L^2(\partial\Omega)}\\
&\le \| w_\varep\|_{L^2(\Omega)}
+\|\mathcal{M}(w_\varep)\|_{L^2(\partial\Omega)}
+C\varep \left\{ \|\nabla u_0\|_{L^2(\Omega)}
+\|\mathcal{M}(\nabla u_0)\|_{L^2(\partial\Omega)}\right\} \\
&\le \| w_\varep\|_{L^2(\Omega)}
+\|\mathcal{M}(w_\varep)\|_{L^2(\partial\Omega)}
+C\varep \|u_0\|_{H^2(\Omega)},
\endaligned
\end{equation}
where we have used the fact that $\|\mathcal{M}(\nabla u_0)\|_{L^2(\partial\Omega)}
\le C \| u_0\|_{H^2(\Omega)}$, which follows from the estimate (\ref{8.4.2}).

As a corollary of Theorem \ref{D-1-theorem}, we obtain the following.

\begin{cor}\label{D-1-cor}
Under the same assumptions as in Theorem \ref{D-1-theorem}, we have
\begin{equation}\label{Dirichlet-1-interior-estimate}
\| u_\varep -u_0\|_{L^2(\Omega)}
+\| \mathcal{M} (u_\varep-u_0)\|_{L^2(\partial\Omega)}
\le C\varep \| u_0\|_{H^2(\Omega)}.
\end{equation}
\end{cor}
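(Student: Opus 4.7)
The plan is to read Corollary \ref{D-1-cor} directly off of Theorem \ref{D-1-theorem} via the triangle inequality applied to the defining decomposition
\[
u_\varep^\alpha - u_0^\alpha
= w_\varep^\alpha + \varep\,\chi_k^{\alpha\beta}(x/\varep)\,\frac{\partial u_0^\beta}{\partial x_k}.
\]
The two terms on the right will be handled separately: $w_\varep$ by Theorem \ref{D-1-theorem}, and the corrector term by the boundedness of $\chi$ together with a standard $H^2$ trace-type estimate for $\mathcal{M}(\nabla u_0)$.

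First I would control the interior $L^2$ piece. Since $w_\varep \in H^{1/2}(\Omega)\hookrightarrow L^2(\Omega)$, Theorem \ref{D-1-theorem} immediately gives $\|w_\varep\|_{L^2(\Omega)} \le C\varep\|u_0\|_{H^2(\Omega)}$. For the corrector piece I would use the remark following Lemma \ref{homo-lemma}: the assumption $A\in\Lambda(\mu,\lambda,\tau)$ yields $\chi \in L^\infty(Y)$, so
\[
\bigl\|\varep\,\chi(x/\varep)\nabla u_0\bigr\|_{L^2(\Omega)}
\le C\varep\,\|\chi\|_{L^\infty(Y)}\,\|\nabla u_0\|_{L^2(\Omega)}
\le C\varep\,\|u_0\|_{H^2(\Omega)}.
\]

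Next I would handle the radial maximal function term on $\partial\Omega$. By Theorem \ref{D-1-theorem} one has $\|\mathcal{M}(w_\varep)\|_{L^2(\partial\Omega)} \le C\varep\|u_0\|_{H^2(\Omega)}$. For the corrector term, using $\|\chi\|_\infty \le C$ one bounds pointwise
\[
\mathcal{M}\!\bigl(\varep\,\chi(x/\varep)\nabla u_0\bigr)(Q)
\le C\varep\,\mathcal{M}(\nabla u_0)(Q),
\]
so it remains to invoke the $L^2(\partial\Omega)$ estimate $\|\mathcal{M}(\nabla u_0)\|_{L^2(\partial\Omega)} \le C\|u_0\|_{H^2(\Omega)}$; this is precisely the ingredient the authors record in (\ref{equation-4.0}) and supply in Section 8 via estimate (\ref{8.4.2}). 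Adding the two contributions yields the claimed inequality.

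There is no genuine obstacle here — the corollary is essentially the display (\ref{equation-4.0}) combined with Theorem \ref{D-1-theorem}. The only point that requires a cited result rather than a direct computation is the control of $\mathcal{M}(\nabla u_0)$ by $\|u_0\|_{H^2(\Omega)}$, and even this is a standard non-tangential maximal function estimate for $H^2$ functions on a Lipschitz domain, whose proof is deferred to the potential-theoretic calculations of Section 8.
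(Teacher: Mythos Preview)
Your proposal is correct and follows exactly the paper's approach: it is precisely the computation recorded in display (\ref{equation-4.0}), combined with Theorem \ref{D-1-theorem} and the bound $\|\mathcal{M}(\nabla u_0)\|_{L^2(\partial\Omega)}\le C\|u_0\|_{H^2(\Omega)}$ from (\ref{8.4.2}). You have identified all the ingredients the authors use, including the $L^\infty$ bound on $\chi$ and the Section~8 estimate for the radial maximal function of an $H^2$ gradient.
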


\noindent{\it Proof of Theorem \ref{D-1-theorem}.} 
We first observe that by (\ref{equation-for-w}),
 $w_\varep$ satisfies 
\begin{equation}\label{Dirichlet-for-w}
\left\{
\begin{aligned}
\big\{ \mathcal{L}_\varep (w_\varep)\big\}^\alpha
& =\varep \frac{\partial}{\partial x_i} \left\{ b_{ijk}^{\alpha\gamma}
(x/\varep)\frac{\partial^2 u_0^\gamma}{\partial x_j\partial x_k}\right\} \quad \text{ in } \Omega,\\
w_\varep^\alpha  & =-\varep \chi_k^{\alpha\beta}(x/\varep)
 \frac{\partial u_0^\beta}{\partial x_k}
\qquad \qquad\text{ on } \partial \Omega.
\end{aligned}
\right.
\end{equation}
Let $w=\theta_\varep +z_\varep$, where
\begin{equation}\label{Dirichlet-for-w-1}
\left\{
\begin{aligned}
\big\{ \mathcal{L}_\varep (\theta_\varep)\big\}^\alpha
& =0 \quad \text{ in } \Omega,\\
\theta_\varep^\alpha & =-\varep \chi_k^{\alpha\beta}(x/\varep)
 \frac{\partial u_0^\beta}{\partial x_k}
\quad\text{ on } \partial \Omega,
\end{aligned}
\right.
\end{equation}
and
\begin{equation}\label{Dirichlet-for-w-2}
\left\{
\begin{aligned}
& \big\{ \mathcal{L}_\varep (z_\varep)\big\}^\alpha
 =\varep \frac{\partial}{\partial x_i} \left\{ b_{ijk}^{\alpha\gamma}
(x/\varep)\frac{\partial^2 u_0^\gamma}{\partial x_j\partial x_k}\right\} \quad \text{ in } \Omega,\\
& z_\varep\in H^1_0(\Omega).
\end{aligned}
\right.
\end{equation}

To estimate $\theta_\varep$, we apply Theorem \ref{Dirichlet-problem-theorem} to obtain
\begin{equation}\label{equation-4.1}
\|\mathcal{M}(\theta_\varep)\|_{L^2(\partial\Omega)}
+\|\theta_\varep\|_{H^{1/2}(\Omega)}
+\left\{ \int_\Omega |\nabla \theta_\varep (x)|^2 \delta (x)\, dx\right\}^{1/2}
\le C\varep \|\nabla u_0\|_{L^2(\partial\Omega)}.
\end{equation}
Since $b_{ijk}^{\alpha\gamma} \in L^\infty(Y)$,
by the usual energy estimates, we have
$
\| z_\varep\|_{H_0^1(\Omega)} \le C \varep \|\nabla^2 u_0\|_{L^2(\Omega)}.
$
Thus,
\begin{equation}\label{equation-4.2}
\|\mathcal{M}(z_\varep)\|_{L^2(\partial\Omega)}
+\|z_\varep\|_{H^{1/2}(\Omega)}
+\left\{ \int_\Omega |\nabla z_\varep (x)|^2 \delta (x)\, dx\right\}^{1/2}
\le C\varep \|\nabla^2 u_0\|_{L^2(\Omega)}.
\end{equation}
Since $\|\nabla u_0\|_{L^2(\partial\Omega)}\le C\|u_0\|_{H^2(\Omega)}$,
the desired estimate (\ref{D-1-w}) follows from (\ref{equation-4.1}) and
(\ref{equation-4.2}).
This completes the proof of Theorem \ref{D-1-theorem}.
\qed

\begin{remark}
{\rm
Let $\Omega$ be a $C^{1,\alpha}$ domain in $\mathbb{R}^d$.
As we mentioned in the Introduction,
 the estimate $\|u_\varep-u_0\|_{L^2(\Omega)}
\le C\, \varep \| u_0\|_{H^2(\Omega)}$ was proved in \cite{Moskow-Vogelius-1},
using the estimates for the $L^2$ Dirichlet problem in \cite{AL-1987-ho,AL-1987}.
Let $\theta_\varep$ and $z_\varep$ be given by (\ref{Dirichlet-for-w-1}) and
(\ref{Dirichlet-for-w-2}) respectively.
It follows from \cite[Theorem 3]{AL-1987} that $\|\theta_\varep\|_{L^\infty(\Omega)}
\le C\varep \|\nabla u_0\|_{L^\infty(\partial\Omega)}$.
In view of (\ref{Dirichlet-for-w-2}) we have
\begin{equation}\label{4.6.1}
|z_\varep (x)|
\le C\, \varep \int_\Omega |\nabla_y G_\varep (x,y)|\, |\nabla^2 u_0 (y)|\, dy,
\end{equation}
where $G_\varep (x,y)$ denotes the Greeen function for $\mathcal{L}_\varep$
in $\Omega$.
By \cite{AL-1987} we have $|\nabla_y G_\varep (x,y)|\le C |x-y|^{1-d}$.
It follows from (\ref{4.6.1}) and H\"older inequality that
$\|z_\varep\|_{L^\infty(\Omega)} \le C_p\, \varep \| \nabla^2 u_0\|_{L^p(\Omega)}$
for any $p>d$.
This gives
\begin{equation}\label{4.6.2}
\| u_\varep -u_0\|_{L^\infty(\Omega)} \le C_p\, \varep \| u_0\|_{W^{2,p}(\Omega)}
\qquad \text{ for any } p>d,
\end{equation}
where we also used the Sobolev imbedding $\|\nabla u_0\|_{C(\overline{\Omega})}
\le C_p \| u_0\|_{W^{2,p}(\Omega)}$ for $p>d$.
}
\end{remark}

\section{Dirichlet boundary condition}

Let $\Omega$ be a bounded Lipschitz domain.
Let $\Omega_\varep\supset\Omega$ and 
$\Lambda_\varep: \partial\Omega\to \partial\Omega_\varep$
be defined as in Section 2.
Given $f\in H^1(\partial\Omega)$ and $F\in L^2(\Omega)$, let
$v_\varep\in H^1(\Omega_\varep)$ be the weak solution of 
\begin{equation}\label{equation-for-v-varep}
\left\{
\begin{array}{ll}
\mathcal{L}_0 (v_\varep)  = \widetilde{F} \quad & \text{ in } \Omega_\varep,\\
 v_\varep  = f_\varep \quad & \text{ on } \partial\Omega_\varep,
\end{array}
\right.
\end{equation}
where $\widetilde{F}=F$ in $\Omega$ and zero otherwise, and $f_\varep (Q)=f (\Lambda^{-1}_\varep(Q))$
for $Q\in \partial\Omega_\varep$.
The goal of this section is to prove the following.

\begin{thm}\label{Dirichlet-2-theorem}
Let $\Omega$ be a bounded Lipschitz domain.
Suppose that $A\in \Lambda(\mu, \lambda,\tau)$ and $A^*=A$.
Let 
\begin{equation}\label{definition-of-Dirichlet-w-2}
w_\varep^\alpha (x)=u_\varep^\alpha (x) -u_0^\alpha (x) -\varep \chi_k^{\alpha\beta} (x/\varep) 
\frac{\partial v_\varep^\beta}{\partial x_k},
\end{equation}
where $v_\varep$ is given by (\ref{equation-for-v-varep}).
Then, if $0<\varep< (1/2)$, 
\begin{equation}\label{interior-estimate-of-w}
\| w_\varep\|_{L^2(\Omega)}\le C\varep |\ln (\varep)|^a
\left\{ \|F\|_{L^2(\Omega)} +\| f\|_{H^1(\partial\Omega)}\right\},
\quad \text{ for any } a>1/2,
\end{equation}

\begin{equation}\label{max-estimate-of-w}
\| \mathcal{M} (w_\varep)\|_{L^2(\partial\Omega)}\le C\varep |\ln (\varep)|^a
\left\{ \|F\|_{L^2(\Omega)} +\| f\|_{H^1(\partial\Omega)}\right\},
\quad \text{ for any } a>3/2,
\end{equation}
and
\begin{equation}\label{square-estimate-of-w}
\|w_\varep\|_{H^{1/2}(\Omega)} +\left\{ \int_\Omega |\nabla w_\varep (x)|^2 \delta (x)\, dx\right\}^{1/2}
\le C\varep |\ln \varep|
\left\{ \|F\|_{L^2(\Omega)} +\| f\|_{H^1(\partial\Omega)}\right\},
\end{equation}
where  $C$ depends only on
$\mu$, $\lambda$, $\tau$, $d$, $m$, $a$ and $\Omega$.
\end{thm}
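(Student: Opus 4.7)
The plan is to follow the scheme sketched for $(\ref{main-Dirichlet-estimate-2})$ in the Introduction: decompose $w_\varep$ into an $\mathcal L_\varep$-harmonic piece carrying the Dirichlet data and a zero-boundary piece carrying the divergence-form source produced by Lemma \ref{homo-lemma}. Since $\widetilde F=F$ on $\Omega$, we have $\mathcal L_\varep(u_\varep)=F=\mathcal L_0(v_\varep)$ there, so Lemma \ref{homo-lemma} gives
\begin{equation*}
\bigl\{\mathcal L_\varep(w_\varep)\bigr\}^\alpha = \varep\,\frac{\partial}{\partial x_i}\Bigl\{b_{ijk}^{\alpha\gamma}(x/\varep)\,\frac{\partial^2 v_\varep^\gamma}{\partial x_j\partial x_k}\Bigr\}\quad\text{in }\Omega,
\end{equation*}
together with $w_\varep|_{\partial\Omega}=f-v_\varep-\varep\chi(x/\varep)\nabla v_\varep$. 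Split $w_\varep=\theta_\varep+z_\varep$ with $\mathcal L_\varep\theta_\varep=0$ in $\Omega$ and $\theta_\varep=w_\varep|_{\partial\Omega}$, while $z_\varep\in H^1_0(\Omega)$ absorbs the right-hand side. The $\theta_\varep$-piece should contribute $O(\varep)$ with no $|\ln\varep|$ factor; all logarithms will come from $z_\varep$.

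For $\theta_\varep$, I would first estimate its Dirichlet data in $L^2(\partial\Omega)$. Since $v_\varep(\Lambda_\varep(P))=f_\varep(\Lambda_\varep(P))=f(P)$ for $P\in\partial\Omega$, integration along the Lipschitz path $t\mapsto\Lambda_t(P)$ gives $|f(P)-v_\varep(P)|\le C\varep\sup_{0\le t\le c\varep}|\nabla v_\varep(\Lambda_t(P))|$, and this pointwise supremum is majorized by the non-tangential maximal function of $\nabla v_\varep$ relative to $\Omega_\varep$, evaluated at $\Lambda_\varep(P)\in\partial\Omega_\varep$. Theorem \ref{Dirichlet-problem-theorem} applied to $\mathcal L_0$ on $\Omega_\varep$ (after reducing the $\widetilde F$-term via a Newtonian potential absorbed into the boundary data), together with a bi-Lipschitz change of variables, yields $\|f-v_\varep\|_{L^2(\partial\Omega)}+\varep\|\chi(\cdot/\varep)\nabla v_\varep\|_{L^2(\partial\Omega)}\le C\varep\{\|F\|_{L^2(\Omega)}+\|f\|_{H^1(\partial\Omega)}\}$. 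Feeding this bound back into Theorem \ref{Dirichlet-problem-theorem} for $\mathcal L_\varep$ on $\Omega$ controls $\|\mathcal M(\theta_\varep)\|_{L^2(\partial\Omega)}$, $\|\theta_\varep\|_{H^{1/2}(\Omega)}$, and $(\int_\Omega|\nabla\theta_\varep|^2\delta\,dx)^{1/2}$ all by $C\varep\{\|F\|_{L^2}+\|f\|_{H^1}\}$, with no logarithm.

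For $z_\varep$, the Green's function representation $z_\varep(x)=\varep\int_\Omega\nabla_y G_\varep(x,y)\cdot h_\varep(y)\,dy$ with $|h_\varep|\le C|\nabla^2 v_\varep|$ reduces the problem to a weighted potential estimate. I would invoke the weighted $L^2$ potential estimates promised in Section 8 (one per target norm, with a larger weight-exponent $a_j$ allotted to the more boundary-singular radial-maximal kernel) to obtain
\begin{equation*}
\|z_\varep\|_{L^2(\Omega)}+\|\mathcal M(z_\varep)\|_{L^2(\partial\Omega)}+\|z_\varep\|_{H^{1/2}(\Omega)}+\Bigl(\int_\Omega|\nabla z_\varep|^2\delta\,dx\Bigr)^{1/2}\le C\varep\Bigl(\int_\Omega|\nabla^2 v_\varep|^2\,\delta(x)\,\phi_{a_j}(\delta(x))\,dx\Bigr)^{1/2},
\end{equation*}
where $a_j>1$ suffices for the interior $L^2$ norm, $a_j>3$ for the radial-maximal norm, and $a_j=2$ for the $H^{1/2}$ and square-function norms. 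Closing the argument then requires the weighted second-derivative inequality
\begin{equation*}
\int_\Omega|\nabla^2 v_\varep(x)|^2\,\delta(x)\,\phi_a(\delta(x))\,dx\le C\,|\ln\varep|^{a}\bigl\{\|F\|_{L^2(\Omega)}^2+\|f\|_{H^1(\partial\Omega)}^2\bigr\},
\end{equation*}
which I would derive by comparing with the standard square-function bound $\int_{\Omega_\varep}|\nabla^2 v_\varep|^2\,\delta_{\Omega_\varep}\,dx\le C(\|F\|_{L^2}^2+\|f\|_{H^1}^2)$ for the $\mathcal L_0$-Dirichlet problem on the larger Lipschitz domain $\Omega_\varep$, using that $\delta_{\Omega_\varep}(x)\ge c(\delta(x)+\varep)$ for $x\in\Omega$. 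On $\{\delta\ge\varep\}$ the $\phi_a$-weight costs only $|\ln\varep|^a$ while $\delta(x)\le\delta_{\Omega_\varep}(x)$ absorbs the factor $\delta(x)$; the thin region $\{\delta<\varep\}$ is handled by a Whitney-shell/Caccioppoli argument using that $\delta_{\Omega_\varep}\gtrsim\varep$ there. Matching the exponents then gives (\ref{interior-estimate-of-w})--(\ref{square-estimate-of-w}).

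The hardest step will be the weighted Green-kernel estimates of Section 8, particularly for $\mathcal M(z_\varep)$: one must simultaneously track the $|x-y|^{1-d}$ singularity of $\nabla_y G_\varep$, the non-tangential supremum in $x$, and a relatively strong logarithmic weight $\phi_a(\delta(y))$ with $a>3$, which forces the use of the periodic asymptotics for $\Gamma_\varep$ from \cite{AL-1991} together with Muckenhoupt-weighted bounds for Calder\'on--Zygmund operators to keep the overall cost polynomial in $|\ln\varep|$ rather than a power of $\varep^{-1}$. A secondary delicate point is the Whitney-shell Caccioppoli step, since $v_\varep\notin H^2(\Omega_\varep)$ in general on a Lipschitz domain, so the local second-derivative bound has to come from interior regularity paired with the boundary $H^{3/2}$ theory of \cite{Jerison-Kenig-1995}.
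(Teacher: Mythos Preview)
Your overall strategy---split into an $\mathcal{L}_\varep$-harmonic piece carrying the boundary data and a zero-Dirichlet piece carrying the divergence source, control the first via Theorem~\ref{Dirichlet-problem-theorem} and the second via the weighted potential estimates of Section~8, then close with the weighted second-derivative bound passed to $\Omega_\varep$---is exactly the paper's approach, with the correct exponent thresholds $a>1$, $a>3$, $a=2$ for the three target norms. Two points need correction.

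First, Lemma~\ref{homo-lemma} does not apply directly to the theorem's $w_\varep=u_\varep-u_0-\varep\chi(x/\varep)\nabla v_\varep$: the lemma requires the second term and the function inside the corrector to be the \emph{same}, whereas here you have $u_0$ in one slot and $v_\varep$ in the other. Your claimed equation and boundary value are therefore those of $\widetilde{w}_\varep=u_\varep-v_\varep-\varep\chi\nabla v_\varep$, not of $w_\varep$; in fact $w_\varep|_{\partial\Omega}=-\varep\chi(\cdot/\varep)\nabla v_\varep$ (since $u_\varep=u_0=f$ there) and $\mathcal{L}_\varep(w_\varep)$ picks up an extra $\mathcal{L}_\varep(v_\varep-u_0)$, which is not zero (it is $\mathcal{L}_0$, not $\mathcal{L}_\varep$, that annihilates $v_\varep-u_0$). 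The paper fixes this by writing $w_\varep=\widetilde{w}_\varep+(v_\varep-u_0)$, running your argument on $\widetilde{w}_\varep$, and estimating $v_\varep-u_0$ separately via Theorem~\ref{Dirichlet-problem-theorem} for $\mathcal{L}_0$ on $\Omega$ with data $v_\varep-f$, using precisely the bound $\|f-v_\varep\|_{L^2(\partial\Omega)}\le C\varep$ you establish.

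Second, your closing weighted inequality is over-engineered. No Whitney-shell or Caccioppoli step is needed, and the $H^2$-regularity concern you raise is a red herring: the paper simply observes that $t\mapsto t\phi_a(t)$ is increasing and $\phi_a$ is decreasing, so for every $x\in\Omega$ one has $\delta(x)\phi_a(\delta(x))\le\delta_\varep(x)\phi_a(\delta_\varep(x))\le\delta_\varep(x)\phi_a(c\varep)\le C|\ln\varep|^a\,\delta_\varep(x)$ (using $\delta(x)\le\delta_\varep(x)$ and $\delta_\varep(x)\ge c\varep$ on $\Omega$), which reduces the weighted integral directly to the unweighted square-function bound on $\Omega_\varep$ from Lemma~\ref{lemma-4.1}.
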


As a corollary we obtain the following convergence rates of $u_\varep$ to $u_0$ in $L^2$.

\begin{cor}\label{cor-4.1}
Under the same conditions as in Theorem \ref{Dirichlet-2-theorem}, we have
\begin{equation}\label{Dirichlet-2-rate}
\| u_\varep-u_0 \|_{L^2(\Omega)}\le C\varep |\ln (\varep)|^{\frac12 +\sigma}
\left\{ \|F\|_{L^2(\Omega)} +\| f\|_{H^1(\partial\Omega)}\right\},
\end{equation}
\begin{equation}\label{Dirichlet-2-max-rate}
\| \mathcal{M} (u_\varep-u_0)\|_{L^2(\partial \Omega)}\le C\varep |\ln \varep|^{\frac32 +\sigma}
\left\{ \|F\|_{L^2(\Omega)} +\| f\|_{H^1(\partial\Omega)}\right\},
\end{equation}
for any $\sigma>0$.
\end{cor}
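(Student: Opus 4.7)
The definition of $w_\varepsilon$ in (\ref{definition-of-Dirichlet-w-2}) is precisely the algebraic identity
$$u_\varepsilon(x)-u_0(x)=w_\varepsilon(x)+\varepsilon\,\chi(x/\varepsilon)\nabla v_\varepsilon(x)\qquad\text{on }\Omega.$$
By the triangle inequality and the subadditivity of $\mathcal{M}$, the corollary reduces to controlling the corrector term $\varepsilon\,\chi(x/\varepsilon)\nabla v_\varepsilon$ in $L^2(\Omega)$ and in $\|\mathcal{M}(\cdot)\|_{L^2(\partial\Omega)}$ by $C\varepsilon\{\|F\|_{L^2(\Omega)}+\|f\|_{H^1(\partial\Omega)}\}$, since Theorem~\ref{Dirichlet-2-theorem} already supplies the correct logarithmic powers for $w_\varepsilon$ (take $a=\tfrac12+\sigma$ in (\ref{interior-estimate-of-w}) and $a=\tfrac32+\sigma$ in (\ref{max-estimate-of-w})).

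For (\ref{Dirichlet-2-rate}) the plan is easy: the boundedness $\chi\in L^\infty(Y)$ from the Remark following Lemma~\ref{homo-lemma} reduces matters to $\|\nabla v_\varepsilon\|_{L^2(\Omega_\varepsilon)}$, and the standard energy estimate for the Dirichlet problem (\ref{equation-for-v-varep}) combined with the bi-Lipschitz equivalence $\Lambda_\varepsilon:\partial\Omega\to\partial\Omega_\varepsilon$ gives
$$\|\nabla v_\varepsilon\|_{L^2(\Omega_\varepsilon)}\le C\{\|F\|_{L^2(\Omega)}+\|f_\varepsilon\|_{H^{1/2}(\partial\Omega_\varepsilon)}\}\le C\{\|F\|_{L^2(\Omega)}+\|f\|_{H^1(\partial\Omega)}\}.$$

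For (\ref{Dirichlet-2-max-rate}) the plan is to bound $\|\mathcal{M}(\nabla v_\varepsilon)\|_{L^2(\partial\Omega)}$, which again uses only $\|\chi\|_\infty<\infty$. I would split $v_\varepsilon=v^{(1)}+v^{(2)}$, where $v^{(1)}=\Gamma_0*\widetilde{F}$ is the Newtonian potential of $\widetilde{F}$ (with $\|v^{(1)}\|_{H^2(\mathbb{R}^d)}\le C\|F\|_{L^2(\Omega)}$ by Calder\'on--Zygmund theory applied to the constant-coefficient operator $\mathcal{L}_0$) and $v^{(2)}$ solves $\mathcal{L}_0(v^{(2)})=0$ in $\Omega_\varepsilon$ with $H^1(\partial\Omega_\varepsilon)$-boundary data $f_\varepsilon-v^{(1)}|_{\partial\Omega_\varepsilon}$ of norm $\lesssim \|F\|_{L^2(\Omega)}+\|f\|_{H^1(\partial\Omega)}$. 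Theorem~\ref{Dirichlet-problem-theorem} applied to $v^{(2)}$ in $\Omega_\varepsilon$ yields $\|(\nabla v^{(2)})^*\|_{L^2(\partial\Omega_\varepsilon)}\le C\{\|F\|_{L^2(\Omega)}+\|f\|_{H^1(\partial\Omega)}\}$, and a classical one-sided non-tangential bound for $\nabla v^{(1)}$ gives $\|(\nabla v^{(1)})^*\|_{L^2(\partial\Omega_\varepsilon)}\le C\|F\|_{L^2(\Omega)}$. Finally, since $\text{dist}(\partial\Omega,\partial\Omega_\varepsilon)\sim\varepsilon$, each point $\Lambda_t(Q)\in\Omega$ ($Q\in\partial\Omega$, $-c<t<0$) sits in a non-tangential cone inside $\Omega_\varepsilon$ at its nearest point $P(Q)\in\partial\Omega_\varepsilon$, with $Q\mapsto P(Q)$ bi-Lipschitz; this forces $\|\mathcal{M}(\nabla v_\varepsilon)\|_{L^2(\partial\Omega)}\le C\|(\nabla v_\varepsilon)^*_{\Omega_\varepsilon}\|_{L^2(\partial\Omega_\varepsilon)}$, and combining with (\ref{max-estimate-of-w}) gives (\ref{Dirichlet-2-max-rate}).

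The main obstacle I anticipate is the non-tangential maximal function estimate for the Newtonian potential piece $v^{(1)}$ on $\partial\Omega_\varepsilon$, since Theorem~\ref{Dirichlet-problem-theorem} is stated only for homogeneous solutions. This is a classical but nontrivial Calder\'on--Zygmund-type $L^2\to L^2$ bound that can be derived from $v^{(1)}\in H^2(\mathbb{R}^d)$ via a trace/one-sided estimate on a tubular neighborhood of $\partial\Omega_\varepsilon$, and must be verified with constants uniform in the Lipschitz character of $\partial\Omega_\varepsilon$ (which is uniform in $\varepsilon$). Everything else is an assembly: the algebraic identity, Theorem~\ref{Dirichlet-2-theorem} for $w_\varepsilon$, energy and non-tangential estimates for $v_\varepsilon$, and the cone comparison between parallel boundaries $\partial\Omega$ and $\partial\Omega_\varepsilon$.
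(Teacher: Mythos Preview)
Your approach is correct and essentially identical to the paper's: the splitting $v_\varepsilon=v^{(1)}+v^{(2)}$ and the ensuing estimates you outline are exactly the content of Lemma~\ref{lemma-4.1}, and the deduction of the corollary from Theorem~\ref{Dirichlet-2-theorem} via the triangle inequality is carried out in Remark~\ref{remark-4.1}. The obstacle you flag is handled there simply by noting that $v^{(1)}=\Gamma_0*\widetilde{F}\in H^2(\mathbb{R}^d)$, from which $\|\mathcal{M}_\varepsilon(\nabla v^{(1)})\|_{L^2(\partial\Omega_\varepsilon)}\le C\|v^{(1)}\|_{H^2}$ follows via the elementary one-sided inequality~(\ref{8.4.2}).
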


Without loss of generality we shall assume that 
$\|F\|_{L^2(\Omega)} +\|f\|_{H^1(\partial\Omega)}=1$ in the rest of this section.
We begin with an estimate on $\nabla v_\varep$.

\begin{lemma}\label{lemma-4.1}
Let $v_\varep$ be defined by (\ref{equation-for-v-varep}).
Then 
$$
\|\mathcal{M}_\varep(\nabla v_\varep)\|_{L^2(\partial\Omega_\varep)}
+\|\nabla v_\varep\|_{H^{1/2}(\Omega_\varep)}
+\left\{ \int_{\Omega_\varep} |\nabla^2 v_\varep (x)|^2
\delta_\varep (x)\, dx\right\}^{1/2}
\le C,
$$
where $\delta_\varep (x)=\text{dist}(x,\partial \Omega_\varep)$ 
and $\mathcal{M}_\varep (v_\varep)(\Lambda_\varep (Q))=
\sup \big\{ |v_\varep (\Lambda_s (Q))|:\, -c<t<\varep\}$.
\end{lemma}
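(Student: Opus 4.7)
The plan is to exploit linearity of the Dirichlet problem for the constant coefficient operator $\mathcal{L}_0$ and to reduce everything to estimates already available from Theorem \ref{Dirichlet-problem-theorem} applied to $\mathcal{L}_0$ on the slightly larger domain $\Omega_\varep$. Since $\Omega_\varep$ has Lipschitz character uniform in $\varep$, all constants coming out of Theorem \ref{Dirichlet-problem-theorem} will be uniform, which is what the conclusion requires. Write $v_\varep=v_\varep^{(1)}+v_\varep^{(2)}$, where $v_\varep^{(1)}$ solves $\mathcal{L}_0(v_\varep^{(1)})=\widetilde F$ in $\Omega_\varep$ with zero boundary data and $v_\varep^{(2)}$ solves $\mathcal{L}_0(v_\varep^{(2)})=0$ in $\Omega_\varep$ with $v_\varep^{(2)}=f_\varep$ on $\partial\Omega_\varep$.

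For $v_\varep^{(2)}$, the bi-Lipschitz map $\Lambda_\varep$ has constants independent of $\varep$, so $\|f_\varep\|_{H^1(\partial\Omega_\varep)}\le C\|f\|_{H^1(\partial\Omega)}\le C$. The second assertion of Theorem \ref{Dirichlet-problem-theorem} then gives $\|(\nabla v_\varep^{(2)})^*\|_{L^2(\partial\Omega_\varep)}\le C$, which bounds $\mathcal{M}_\varep(\nabla v_\varep^{(2)})$. Because $\mathcal{L}_0$ has constant coefficients, each partial derivative $\partial_\ell v_\varep^{(2)}$ is itself a solution of $\mathcal{L}_0(\partial_\ell v_\varep^{(2)})=0$ in $\Omega_\varep$, with non-tangential limit on the boundary lying in $L^2(\partial\Omega_\varep)$ and of norm at most $\|(\nabla v_\varep^{(2)})^*\|_{L^2(\partial\Omega_\varep)}\le C$. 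Applying the first assertion of Theorem \ref{Dirichlet-problem-theorem} to each $\partial_\ell v_\varep^{(2)}$ gives the $H^{1/2}$ and square-function bounds for $\nabla v_\varep^{(2)}$.

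For $v_\varep^{(1)}$, I would introduce the Newtonian potential $N(x)=\int_{\mathbb R^d}\Gamma_0(x,y)\widetilde F(y)\,dy$ associated with $\mathcal{L}_0$. By Calder\'on--Zygmund theory $N\in H^2(\mathbb R^d)$ with $\|N\|_{H^2(\mathbb R^d)}\le C\|F\|_{L^2(\Omega)}\le C$, and $\mathcal{L}_0 N=\widetilde F$ in $\mathbb R^d$. Writing $v_\varep^{(1)}=N+h_\varep$, the correction $h_\varep$ satisfies $\mathcal{L}_0(h_\varep)=0$ in $\Omega_\varep$ with boundary data $-N|_{\partial\Omega_\varep}$; by the trace theorem this data lies in $H^{3/2}(\partial\Omega_\varep)\subset H^1(\partial\Omega_\varep)$ uniformly in $\varep$, and the three desired estimates for $h_\varep$ follow exactly as in the paragraph above. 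For the contribution from $N$ itself, the $H^{1/2}$ bound and the square-function integral are immediate, since $\nabla N\in H^1(\mathbb R^d)$ and $\int_{\Omega_\varep}|\nabla^2 N|^2\delta_\varep\le \mathrm{diam}(\Omega_\varep)\,\|\nabla^2 N\|_{L^2}^2\le C$.

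The one non-routine point, and the main obstacle, is the radial maximal estimate $\|\mathcal{M}_\varep(\nabla N)\|_{L^2(\partial\Omega_\varep)}\le C$. This is a statement about the boundary behavior on the Lipschitz surface $\partial\Omega_\varep$ of the gradient of a Newtonian potential with an $L^2$ density---equivalently, the $L^2$ boundedness of a singular-integral-type maximal function built from the kernel $\nabla_x\Gamma_0(x,y)$ on a family of parallel Lipschitz surfaces. This is precisely the sort of weighted $L^2$ potential estimate for $\mathcal{L}_\varep$ proved in Section 8 of the paper; I would invoke it (in its $\varep=0$ specialization) to conclude. Combining the estimates for $v_\varep^{(1)}$ and $v_\varep^{(2)}$ yields the lemma.
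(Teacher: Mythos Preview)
Your approach is correct and is essentially the paper's own proof, just organized differently: the paper writes $v_\varep = G + (v_\varep - G)$ in one step, with $G = \Gamma_0 * \widetilde{F}$ your Newtonian potential $N$, so that $v_\varep - G$ (which equals your $h_\varep + v_\varep^{(2)}$) is a single $\mathcal{L}_0$-harmonic function with boundary data $f_\varep - G \in H^1(\partial\Omega_\varep)$, handled exactly as you describe via Theorem~\ref{Dirichlet-problem-theorem} and the constant-coefficient observation that derivatives of solutions are solutions.

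The step you flag as the main obstacle, $\|\mathcal{M}_\varep(\nabla N)\|_{L^2(\partial\Omega_\varep)} \le C$, is in fact routine and needs no singular-integral input: since $N \in H^2(\mathbb{R}^d)$, each $\partial_\ell N$ lies in $H^1(\Omega_\varep)$, and the elementary inequality~(\ref{8.4.2}) (fundamental theorem of calculus along the radial curves plus Cauchy--Schwarz) immediately gives $\|\mathcal{M}_\varep(u)\|_{L^2(\partial\Omega_\varep)} \le C\|u\|_{H^1(\Omega_\varep)}$ for any $u$. Your instinct to look to Section~8 is right, but the tool is this short pointwise bound rather than a weighted potential estimate.
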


\begin{proof}
Let $G=\Gamma_0 * \widetilde{F}$ in $\brd$, where $\Gamma_0 (x)$ is the matrix
of fundamental solutions for the operator $\mathcal{L}_0$, with pole at the origin.
Clearly, $\| G\|_{H^2(\brd)}\le C\| F\|_{L^2(\Omega)}$. This implies that
$\|\mathcal{M}_\varep(\nabla G)\|_{L^2(\partial\Omega_\varep)} +\|G\|_{H^1(\partial\Omega_\varep)}
\le C\| F\|_{L^2(\Omega)}$. 

Next, we note that $ \mathcal{L}_0 (v_\varep -G)=0$ in $\Omega_\varep$ and
$v_\varep -G=f_\varep -G$ on $\partial\Omega_\varep$.
Hence, by Theorem \ref{Dirichlet-problem-theorem} (see \cite{Dahlberg-Kenig-Pipher-Verchota}
for operators with constant coefficients), 
$$
\aligned
\|(\nabla (v_\varep -G))^*\|_{L^2(\partial\Omega_\varep)}
&+\|\nabla (v_\varep-G)\|_{H^{1/2}(\Omega_\varep)}
+\left\{ \int_{\Omega_\varep} |\nabla^2 (v_\varep-G)|^2\delta_\varep (x)\, dx \right\}^{1/2}\\
&\le C\| f_\varep -G\|_{H^1(\partial\Omega_\varep)} \le C.
\endaligned
$$
It follows that
$$
\aligned
&\|\mathcal{M}_\varep(\nabla v_\varep)\|_{L^2(\partial\Omega_\varep)}
+\|\nabla v_\varep\|_{H^{1/2}(\Omega_\varep)}
+\left\{ \int_{\Omega_\varep} |\nabla^2 v_\varep (x)|^2
\delta_\varep (x)\, dx\right\}^{1/2}\\
&\qquad
\le C +\|\mathcal{M}_\varep(\nabla G)\|_{L^2(\partial\Omega_\varep)}
+\|\nabla G\|_{H^{1/2}(\Omega_\varep)}
+\left\{ \int_{\Omega_\varep} |\nabla^2 G (x)|^2
\delta_\varep (x)\, dx\right\}^{1/2}\\
&\qquad
\le C.
\endaligned
$$
\end{proof}

\begin{remark}\label{remark-4.1}
{\rm
By Lemma \ref{lemma-4.1} we have
$\|\nabla v_\varep\|_{L^2(\Omega)} +\| \mathcal{M}(\nabla v_\varep)\|_{L^2(\partial\Omega)}
\le C$.
It follows that
\begin{equation}\label{4.1.1}
\aligned
\| u_\varep -u_0\|_{L^2(\Omega)} &
\le \|w_\varep\|_{L^2(\Omega)} +C \varep, \\
\| \mathcal{M}(u_\varep -u_0)\|_{L^2(\partial\Omega)} &
\le \|\mathcal{M}(w_\varep)\|_{L^2(\partial\Omega)} +C \varep .
\endaligned
\end{equation}
 This, together with Theorem \ref{Dirichlet-2-theorem}, gives the estimates in 
Corollary \ref{cor-4.1}.
}
\end{remark}

\begin{lemma}\label{lemma-4.2}
Let $f_\varep (Q)=f(\Lambda_\varep^{-1} (Q))$ and $v_\varep$ be defined 
by (\ref{equation-for-v-varep}).
Then $\| f-v_\varep\|_{L^2(\partial\Omega)} \le C\varep$ and
$$
\|(v_\varep -u_0)^*\|_{L^2(\partial\Omega)}
+\| v_\varep -u_0\|_{H^{1/2}(\Omega)}
+\left\{ \int_\Omega
|\nabla (v_\varep-u_0)|^2\delta (x)\, dx\right\}^{1/2}
\le C\varep.
$$
\end{lemma}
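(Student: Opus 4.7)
The plan is to prove the two assertions separately, using a path integral along the family $\{\Lambda_t\}$ for the first and Theorem \ref{Dirichlet-problem-theorem} for the second.

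For the bound $\|f-v_\varep\|_{L^2(\partial\Omega)}\le C\varep$, I would exploit the fact that for $Q\in\partial\Omega$, by construction $v_\varep(\Lambda_\varep(Q))=f_\varep(\Lambda_\varep(Q))=f(\Lambda_\varep^{-1}(\Lambda_\varep(Q)))=f(Q)$. Hence
\[
f(Q)-v_\varep(Q)=v_\varep(\Lambda_\varep(Q))-v_\varep(Q)=\int_0^\varep \frac{d}{dt}\bigl[v_\varep(\Lambda_t(Q))\bigr]\, dt.
\]
Since $|\Lambda_t(Q)-\Lambda_s(Q)|\sim |t-s|$ from the properties of the family recalled in Section 2, the chain rule and Cauchy--Schwarz yield
\[
|f(Q)-v_\varep(Q)|\le C\,\varep\cdot \mathcal{M}_\varep(\nabla v_\varep)(\Lambda_\varep(Q)).
\]
Integrating in $Q$ over $\partial\Omega$, using that $\Lambda_\varep:\partial\Omega\to\partial\Omega_\varep$ is bi-Lipschitz with bounded Jacobian, and invoking Lemma \ref{lemma-4.1} gives $\|f-v_\varep\|_{L^2(\partial\Omega)}^2\le C\varep^2\|\mathcal{M}_\varep(\nabla v_\varep)\|_{L^2(\partial\Omega_\varep)}^2\le C\varep^2$.

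For the second set of estimates, the key observation is that since $\widetilde{F}=F$ on $\Omega$, we have $\mathcal{L}_0(v_\varep-u_0)=0$ in $\Omega$, and the non-tangential boundary value of $v_\varep-u_0$ on $\partial\Omega$ is precisely $v_\varep|_{\partial\Omega}-f$. The constant-coefficient operator $\mathcal{L}_0$ satisfies all the hypotheses of Theorem \ref{Dirichlet-problem-theorem} (in fact it is a special case of the homogenized family and the result for constant-coefficient systems in Lipschitz domains is classical \cite{Dahlberg-Kenig-Pipher-Verchota}), so applying that theorem to $v_\varep-u_0$ yields
\[
\|(v_\varep-u_0)^*\|_{L^2(\partial\Omega)}+\|v_\varep-u_0\|_{H^{1/2}(\Omega)}+\Bigl\{\int_\Omega |\nabla(v_\varep-u_0)|^2\delta(x)\,dx\Bigr\}^{1/2}\le C\|v_\varep-f\|_{L^2(\partial\Omega)},
\]
and the first part bounds the right-hand side by $C\varep$.

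The only subtle point I anticipate is justifying the absolute continuity of $t\mapsto v_\varep(\Lambda_t(Q))$ on $[0,\varep]$ for a.e.~$Q\in\partial\Omega$, which is needed to write the fundamental theorem of calculus identity above. This follows from the fact that $\nabla v_\varep$ admits non-tangential limits a.e.\ on every parallel surface $\partial\Omega_t$ (since $\|\mathcal{M}_\varep(\nabla v_\varep)\|_{L^2(\partial\Omega_\varep)}<\infty$ by Lemma \ref{lemma-4.1}), together with the transversality $|d\Lambda_t(Q)/dt|\sim 1$, so that approximating $v_\varep$ by smooth functions and passing to the limit is routine. Everything else reduces to the two main ingredients already in hand: the radial maximal estimate of Lemma \ref{lemma-4.1} and the $L^2$ Dirichlet theory for $\mathcal{L}_0$.
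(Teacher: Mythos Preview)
Your proposal is correct and follows essentially the same approach as the paper: the paper's proof also identifies $f(Q)-v_\varep(Q)=v_\varep(\Lambda_\varep(Q))-v_\varep(Q)$, bounds it pointwise by $C\varep\,\mathcal{M}_\varep(\nabla v_\varep)(\Lambda_\varep(Q))$ and invokes Lemma~\ref{lemma-4.1}, then applies Theorem~\ref{Dirichlet-problem-theorem} to the constant-coefficient solution $v_\varep-u_0$ with boundary data $v_\varep-f$. Your added remark on the absolute continuity of $t\mapsto v_\varep(\Lambda_t(Q))$ is a detail the paper leaves implicit.
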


\begin{proof}
Note that for $Q\in \partial\Omega$,
$$
|f(Q)-v_\varep (Q)|=|v_\varep (\Lambda_\varep (Q))-v_\varep (Q)|
\le C\varep \mathcal{M}_\varep (\nabla v_\varep) (\Lambda_\varep (Q)).
$$
This gives $\| f-v_\varep\|_{L^2(\partial\Omega)}
\le C\varep \|\mathcal{M}_\varep (\nabla v_\varep)\|_{L^2(\partial\Omega_\varep)}
\le C\varep$, where the last inequality follows from Lemma \ref{lemma-4.1}.
Since $\mathcal{L}_0 (v_\varep -u_0)=0$ in $\Omega$ and
$v_\varep -u_0 =v_\varep -f$ on $\partial\Omega$,
we may apply Theorem \ref{Dirichlet-problem-theorem}
(for the case of constant coefficients) to obtain
$$
\aligned
\|(v_\varep -u_0)^*\|_{L^2(\partial\Omega)}
& +\| v_\varep -u_0\|_{H^{1/2} (\Omega)}
+\left\{ \int_\Omega |\nabla (v_\varep -u_0)|^2\delta (x)\, dx \right\}^{1/2}\\
&\le C \| v_\varep -f\|_{L^2(\partial\Omega)}
\le C\varep.
\endaligned
$$
This completes the proof.
\end{proof}

Let $\phi_a (t) = \left\{ \ln (\frac{1}{t}+e^a)\right\}^a$.

\begin{lemma}\label{lemma-4.3}
Let $W_\varep\in H^1(\Omega)$ be a solution of $\mathcal{L}_\varep (W_\varep)=\text{\rm div}
(h)$ in $\Omega$ and $W_\varep =g$ on $\partial\Omega$
for some $h\in L^2(\Omega)$ and $ g\in H^1(\partial\Omega)$.
Then
\begin{equation}\label{4.3.1}
\| W_\varep\|_{L^2(\Omega)}
\le C \| g\|_{L^2(\partial\Omega)}+
C_a \left\{ \int_\Omega |h(x)|^2\delta (x) \phi_a (\delta(x))\, dx\right\}^{1/2}
\text{ for any } a>1,
\end{equation}
\begin{equation}\label{4.3.2}
\left\{ \int_\Omega |\nabla W_\varep (x)|^2\delta (x)\, dx\right\}^{1/2}
\le C\| g\|_{L^2(\partial\Omega)}
+C \left\{ \int_\Omega |h(x)|^2\delta (x) \phi_2 (\delta(x))\, dx\right\}^{1/2},
\end{equation}
and
\begin{equation}\label{4.3.3}
\|\mathcal{M}(W_\varep)\|_{L^2(\partial\Omega)}
\le C\| g\|_{L^2(\partial\Omega)}
+C_a \left\{ \int_\Omega |h(x)|^2\delta (x) \phi_a (\delta(x))\, dx\right\}^{1/2}
\end{equation}
for any  $a>3$.
\end{lemma}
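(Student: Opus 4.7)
The plan is to split $W_\varep = W_\varep^{(1)} + W_\varep^{(2)}$, where $W_\varep^{(1)}$ solves the homogeneous Dirichlet problem $\mathcal{L}_\varep W_\varep^{(1)} = 0$ in $\Omega$ with $W_\varep^{(1)} = g$ on $\partial\Omega$, and $W_\varep^{(2)} \in H^1_0(\Omega)$ solves $\mathcal{L}_\varep W_\varep^{(2)} = \text{div}(h)$ with zero boundary data. The $W_\varep^{(1)}$ piece contributes the $C\|g\|_{L^2(\partial\Omega)}$ terms in all three inequalities: by Theorem \ref{Dirichlet-problem-theorem} we have $\|(W_\varep^{(1)})^*\|_{L^2(\partial\Omega)} + \|W_\varep^{(1)}\|_{H^{1/2}(\Omega)} + \{\int_\Omega |\nabla W_\varep^{(1)}|^2 \delta\, dx\}^{1/2} \le C\|g\|_{L^2(\partial\Omega)}$, and by the equivalence noted at the end of Section 2, $\|(W_\varep^{(1)})^*\|_{L^2(\partial\Omega)}$ dominates both $\|W_\varep^{(1)}\|_{L^2(\Omega)}$ and $\|\mathcal{M}(W_\varep^{(1)})\|_{L^2(\partial\Omega)}$.

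All the work is therefore in bounding $W_\varep^{(2)}$ by a weighted $L^2$ norm of $h$ alone. Using the Green function $G_\varep$ for $\mathcal{L}_\varep$ in $\Omega$ and integrating by parts (the boundary term vanishes because $G_\varep(x,\cdot) = 0$ on $\partial\Omega$),
\[
W_\varep^{(2)}(x) = -\int_\Omega \nabla_y G_\varep(x,y)\cdot h(y)\,dy, \qquad \nabla W_\varep^{(2)}(x) = -\int_\Omega \nabla_x \nabla_y G_\varep(x,y)\cdot h(y)\,dy.
\]
The kernel bounds $|\nabla_y G_\varep(x,y)| \le C|x-y|^{1-d}$ and $|\nabla_x \nabla_y G_\varep(x,y)| \le C|x-y|^{-d}$ recalled in Section 2 from \cite{AL-1991} identify $W_\varep^{(2)}$ as a fractional-integral operator applied to $h$ and $\nabla W_\varep^{(2)}$ as a Calder\'on--Zygmund operator applied to $h$. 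Thus (\ref{4.3.1}) reduces to a weighted $L^2$ mapping bound for the fractional potential from $L^2(\Omega;\delta\,\phi_a(\delta))$ into $L^2(\Omega)$; (\ref{4.3.2}) reduces to a weighted singular-integral bound from $L^2(\Omega;\delta\,\phi_2(\delta))$ into $L^2(\Omega;\delta)$; and (\ref{4.3.3}) reduces to a weighted bound for the radial maximal trace on $\partial\Omega$ of the fractional potential. These are precisely the weighted $L^2$ potential estimates the authors announce for Section 8, which I would invoke directly.

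The main obstacle is proving those weighted estimates. Although $\delta$ lies in the Muckenhoupt class $A_2$ on a bounded Lipschitz domain, here the weight (together with the extra $\phi_a(\delta)$ factor) sits only on the \emph{source} side, while the target side is either unweighted or carries only the plain $\delta$ factor. The logarithmic correction $\phi_a(\delta)$ encodes the precise borderline deficit one must pay to move the weight across, and the three thresholds $a>1$, $a=2$, $a>3$ record that the fractional integral has one spare power of $|x-y|^{-1}$, the singular integral has none, and the boundary radial maximal operator incurs an additional loss when passing from interior estimates to the trace on $\partial\Omega$. In practice I would split $\Omega\times\Omega$ into the near-diagonal region $|x-y|\le c\,\delta(x)$ and its complement, treat the near-diagonal piece by standard $A_2$ weighted inequalities for the relevant operator, and bound the far-diagonal piece by a dyadic Hardy-type argument whose logarithmic loss matches the exponent of $\phi_a$ required in each of (\ref{4.3.1})--(\ref{4.3.3}).
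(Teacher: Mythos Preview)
Your decomposition $W_\varep = W_\varep^{(1)} + W_\varep^{(2)}$ is natural, and the treatment of $W_\varep^{(1)}$ via Theorem~\ref{Dirichlet-problem-theorem} is fine. The gap is in the representation of $W_\varep^{(2)}$. The pointwise bounds $|\nabla_y G_\varep(x,y)|\le C|x-y|^{1-d}$ and $|\nabla_x\nabla_y G_\varep(x,y)|\le C|x-y|^{-d}$ that you invoke are the bounds for the \emph{fundamental solution} $\Gamma_\varep$ in $\mathbb{R}^d$ (this is what Section~2 actually states), not for the Green function $G_\varep$ of $\Omega$. In a general Lipschitz domain these Green-function gradient estimates fail near $\partial\Omega$; the paper only quotes $|\nabla_y G_\varep(x,y)|\le C|x-y|^{1-d}$ for $C^{1,\alpha}$ domains (Remark after Corollary~\ref{D-1-cor}), and the second-order bound is still worse. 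So you cannot identify $W_\varep^{(2)}$ and $\nabla W_\varep^{(2)}$ as a fractional integral and a Calder\'on--Zygmund operator on $\Omega$, and the Section~8 propositions---which are stated for the $\Gamma_\varep$-potential $H_\varep$, not for $W_\varep^{(2)}$---do not apply directly.

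The paper avoids this by using $\Gamma_\varep$ instead of $G_\varep$: it sets $H_\varep(x)=-\int_\Omega \nabla_y\Gamma_\varep(x,y)\cdot h(y)\,dy$, so that $\mathcal{L}_\varep(W_\varep-H_\varep)=0$ in $\Omega$ with boundary data $g-H_\varep$. The price is an extra term $\|H_\varep\|_{L^2(\partial\Omega)}$ on the right of the Dirichlet estimate, which is then absorbed by Proposition~\ref{prop-H-1}; this is exactly what forces the boundary estimate of $H_\varep$ into the argument and explains why the Section~8 propositions are formulated for $H_\varep$. A secondary point: even for $\Gamma_\varep$, the size bound $|\nabla_x\nabla_y\Gamma_\varep|\le C|x-y|^{-d}$ alone does not make the operator Calder\'on--Zygmund; the paper's Proposition~\ref{prop-H-3} uses the asymptotic expansion of $\nabla_x\nabla_y\Gamma_1$ from \cite{AL-1991} to reduce to genuine CZ operators plus a maximal function, then appeals to Coifman--Fefferman and Sawyer's two-weight maximal inequality rather than a near/far dyadic Hardy argument.
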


\begin{proof}
Let $h=(h_i^\alpha)$ and 
\begin{equation}\label{denifition-of-H}
H_\varep^\alpha (x)=-\int_\Omega \frac{\partial}{\partial y_i}
\left\{ \Gamma_\varep^{\alpha\beta} (x,y) \right\} h_i^\beta(y)\, dy,
\end{equation}
where $\Gamma_\varep (x,y)=
\big(\Gamma_\varep^{\alpha\beta} (x,y)\big)$ is the matrix of fundamental solutions
for $\mathcal{L}_\varep$ in $\brd$, with pole at $y$.
Note that $\mathcal{L}_\varep (W_\varep -H_\varep)=0$ in $\Omega$ and
$W_\varep -H_\varep =g-H_\varep$ on $\partial\Omega$.
It follows by Theorem \ref{Dirichlet-problem-theorem} that
$$
\aligned
\|W_\varep -H_\varep\|_{L^2(\Omega)} & +\|(W_\varep -H_\varep)^*\|_{L^2(\partial\Omega)}
+\left\{ \int_\Omega |\nabla (W_\varep -H_\varep)|^2\delta (x)\, dx\right\}^{1/2}\\
&\le C \|g\|_{L^2(\partial\Omega)}
+C\| H_\varep \|_{L^2(\partial\Omega)}.
\endaligned
$$
Hence,
$$
\aligned
 \|W_\varep\|_{L^2(\Omega)} & \le C\left\{ \| g\|_{L^2(\partial\Omega)}
+\|H_\varep\|_{L^2(\partial\Omega)}\right\} +\| H_\varep\|_{L^2(\Omega)},\\
 \|\mathcal{M}(W_\varep)\|_{L^2(\partial\Omega)}
& \le C \left\{ \| g\|_{L^2(\partial\Omega)}
+\|\mathcal{M}(H_\varep)\|_{L^2(\partial\Omega)}\right\},\\
\left\{ \int_\Omega |\nabla W_\varep |^2\delta (x)\, dx\right\}^{1/2}
& \le C \left\{ \| g\|_{L^2(\partial\Omega)} +\|H_\varep\|_{L^2(\partial\Omega)}\right\}
+\left\{ \int_\Omega |\nabla H_\varep|^2 \delta(x)\, dx\right\}^{1/2}.\\
\endaligned
$$
The desired estimates now follow from Propositions \ref{prop-H-1}, \ref{prop-H-2}, \ref{prop-H-3}
and \ref{prop-H-4}.
\end{proof}

We are in a position to give the proof of Theorem \ref{Dirichlet-2-theorem}.

\noindent{\it Proof of Theorem \ref{Dirichlet-2-theorem}. }
Let
\begin{equation}\label{definition-of-wide-w}
\widetilde{w}_\varep^\alpha =u^\alpha_\varep (x)
-v^\alpha_\varep (x) -\varep \chi_k^{\alpha\beta}(x/\varep) 
\frac{\partial v_\varep^\beta}{\partial x_k}
=w_\varep^\alpha -(v_\varep -u_0)
\end{equation}
in $\Omega$. In view of Lemma \ref{lemma-4.2}, it suffices to show that $\widetilde{w}_\varep$
satisfies the estimates in Theorem \ref{Dirichlet-2-theorem}.

To this end we first observe that by Lemma \ref{homo-lemma},
$$
\left\{
\aligned
& \big( \mathcal{L}_\varep (\widetilde{w}_\varep)\big)^\alpha
=\varep \frac{\partial}{\partial x_i}
\left\{ b_{ijk}^{\alpha\gamma}(x/\varep) \frac{\partial^2 v_\varep^\gamma}
{\partial x_j\partial x_k}\right\},\quad \text{ in }\Omega\\
& \widetilde{w}_\varep^\alpha 
=f^\alpha -v^\alpha_\varep -\varep \chi_k^{\alpha\beta} (x/\varep)
\frac{\partial v_\varep^\beta}{\partial x_k}\qquad\text{ on } \partial\Omega.
\endaligned
\right.
$$
Let $\widetilde{w}_\varep=\widetilde{\theta}_\varep +\widetilde{z}_\varep$, where
$\mathcal{L}_\varep (\widetilde{\theta}_\varep)=0$ in $\Omega$, 
$\widetilde{\theta}_\varep =\widetilde{w}_\varep$ on $\partial\Omega$, and $\widetilde{z}_\varep$
satisfies
$$
\left\{
\aligned
& \big( \mathcal{L}_\varep (\widetilde{z}_\varep)\big)^\alpha
=\varep \frac{\partial}{\partial x_i}
\left\{ b_{ijk}^{\alpha\gamma}(x/\varep) \frac{\partial^2 v_\varep^\gamma}
{\partial x_j\partial x_k}\right\}\quad \text{ in }\Omega\\
& \widetilde{z}_\varep\in H^1_0(\Omega).
\endaligned
\right.
$$
To estimate $\widetilde{\theta}_\varep$, we apply Theorem \ref{Dirichlet-problem-theorem}
to obtain
$$
\aligned
& \|\widetilde{\theta}_\varep\|_{L^2(\Omega)}
+\|\mathcal{M}(\widetilde{\theta}_\varep)\|_{L^2(\partial\Omega)} 
+\left\{ \int_\Omega |\nabla\widetilde{\theta}_\varep|^2\delta (x)\, dx\right\}^{1/2} \le 
C\| \widetilde{\theta}_\varep\|_{L^2(\partial\Omega)}\\
&\qquad\qquad =C \| \widetilde{w}_\varep\|_{L^2(\partial\Omega)}
\le 
C\left\{ \| f-v_\varep\|_{L^2(\partial\Omega)}
+\varep \|\nabla v_\varep\|_{L^2(\partial\Omega)} \right\} \le C\varep,
\endaligned
$$
where the last inequality follows from Lemmas \ref{lemma-4.1} and \ref{lemma-4.2}.

Finally, we use Lemma \ref{lemma-4.3} to handle $\widetilde{z}_\varep$.
In particular, this gives
\begin{equation}\label{4.4.1}
\|\widetilde{z}_\varep\|_{L^2(\Omega)}
\le C \varep \left\{ \int_\Omega |\nabla^2 v_\varep |^2
\delta(x) \phi_a (\delta (x))\, dx\right\}^{1/2}.
\end{equation}
for any $a>1$.
Note that $\phi_a (t)$ is decreasing and $t\phi_a (t)$ is increasing on $(0, \infty)$
for any $a\ge 0$. Hence, for any $x\in \Omega$ and $0<\varep< c_0$, 
$$
\delta (x)\phi_a (x)
\le \delta_\varep (x) \phi_a (\delta_\varep (x))
\le \delta_\varep (x) \phi_a (\varep/C)\le C \delta_\varep (x) |\ln (\varep)|^a,
$$
where $\delta_\varep (x) =\text{dist}(x, \partial\Omega_\varep)$.
In view of (\ref{4.4.1}) we obtain
\begin{equation}\label{4.4.2}
\aligned
\int_\Omega |\nabla^2 v_\varep|^2 \delta (x) \phi_a(\delta(x))\, dx
&\le C\varep |\ln (\varep)|^a
 \int_\Omega |\nabla^2 v_\varep|^2\delta_\varep (x)\, dx\\
&\le C\varep |\ln (\varep)|^a
 \int_{\Omega_\varep}
 |\nabla^2 v_\varep|^2\delta_\varep (x)\, dx\\
&\le C\varep |\ln (\varep)|^{a},
\endaligned
\end{equation}
for any $a>1$, where the last inequality follows from Lemma \ref{lemma-4.1}.
Thus $\|\widetilde{z}_\varep\|_{L^2(\Omega)}
\le C \varep |\ln (\varep)|^{a/2}$ for ant $a>1$.
This, together with the estimates of $\widetilde{\theta}_\varep$ and $v_\varep -u_0$
in $L^2(\Omega)$, gives (\ref{interior-estimate-of-w}).
Estimates (\ref{max-estimate-of-w}) and (\ref{square-estimate-of-w})
follow from Lemma \ref{lemma-4.3} in the same manner.
We omit the details.
\qed
\medskip

\noindent{\it Proof of Theorem \ref{main-Dirichlet-theorem}. }
Estimate (\ref{main-Dirichlet-estimate-1}) is given in Corollary \ref{D-1-cor}
and estimate (\ref{main-Dirichlet-estimate-2}) in Corollary \ref{cor-4.1}.
\qed
\medskip

\section{Neumann boundary condition, part I}

Fix $F\in L^2(\Omega)$ and $g\in L^2(\partial\Omega)$. 
Suppose that $\int_\Omega F +\int_{\partial\Omega} g=0$.
Let
$u_\varep, \, u_0 \in H^1(\Omega)$ solve
\begin{equation}\label{equation-Neumann}
 \left\{ \begin{array}{ll} \mathcal{L}_\varep (u_\varep)=F & \mbox{ in } \Omega, \\
\frac{\partial u_\varep}{\partial\nu_\varep}= g & \mbox{ on } \partial\Omega,
\end{array}\right.
\qquad \text{ and } \qquad
\left\{ \begin{array}{ll} \mathcal{L}_0 (u_0)=F & \mbox{ in } \Omega, \\
\frac{\partial u_0}{\partial\nu_0}=g & \mbox{ on } \partial\Omega, \end{array}\right.
\end{equation}
respectively. Recall that
\begin{equation}\label{definition-of-conormal}
\left(\frac{\partial u_\varep}{\partial\nu_\varep}\right)^\alpha
=n_i(x) a_{ij}^{\alpha\beta} (x/\varep)\frac{\partial u_\varep^\beta}{\partial x_j}
\qquad\text{ and }\qquad
\left(\frac{\partial u_0}{\partial\nu_0}\right)^\alpha
=n_i(x) \hat{a}_{ij}^{\alpha\beta}\frac{\partial u_0^\beta}{\partial x_j},
\end{equation}
where $n=(n_1,\cdots, n_d)$ denotes the outward unit normal to $\partial\Omega$.

\begin{lemma}\label{conormal-lemma}
Let $w_\varep^\alpha =u_\varep^\alpha -v_\varep^\alpha
 -\varep \chi^{\alpha\beta}_k(x/\varep)\frac{\partial v_\varep^\beta}
{\partial x_k}$, where
$u_\varep\in H^1(\Omega)$ and $v_\varep\in H^2(\Omega)$.
Then
\begin{equation}\label{conormal-of-w}
\aligned
\left(\frac{\partial w_\varep}{\partial\nu_\varep}\right)^\alpha
& =n_i(x) a_{ij}^{\alpha\beta}(x/\varep) \frac{\partial u_\varep^\beta}{\partial x_j}
-n_i(x) \hat{a}_{ij}^{\alpha\beta} \frac{\partial v_\varep^\beta}{\partial x_j}\\
&\qquad +\frac{\varep}{2}
\left\{ n_i(x)\frac{\partial}{\partial x_j} -n_j(x) \frac{\partial}{\partial x_i}\right\}
\left\{ \Psi_{jik}^{\alpha\gamma} (x/\varep) \frac{\partial v_\varep^\gamma}{\partial x_k}\right\}\\
&\qquad -\varep n_i(x) b_{ijk}^{\alpha\gamma} (x/\varep) 
\frac{\partial^2 v^\gamma_\varep}{\partial x_j\partial x_k},
\endaligned
\end{equation}
where $\Psi_{jik}^{\alpha\gamma} (y)$ and $b_{ijk}^{\alpha\gamma}(y)$ 
are the same as in Lemma \ref{homo-lemma}.
\end{lemma}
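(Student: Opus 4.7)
The plan is to compute $\left(\partial w_\varep/\partial\nu_\varep\right)^\alpha = n_i\, a_{ij}^{\alpha\beta}(x/\varep)\, \partial_j w_\varep^\beta$ directly from the definition of $w_\varep$ and then to algebraically regroup the pieces using the identities established in Section~3. Differentiating the expression $w_\varep^\beta = u_\varep^\beta - v_\varep^\beta - \varep\chi_k^{\beta\gamma}(x/\varep)\,\partial_k v_\varep^\gamma$, the factor $\varep$ cancels on the corrector gradient (by the chain rule $\partial_j[\chi_k^{\beta\gamma}(x/\varep)] = \varep^{-1}(\partial_{y_j}\chi_k^{\beta\gamma})(x/\varep)$) while the final term inherits a factor $\varep$ times $\chi\cdot \nabla^2 v_\varep$. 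Contracting with $n_i a_{ij}^{\alpha\beta}(x/\varep)$ gives four terms.

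The key algebraic step is to rewrite $n_i a_{ij}^{\alpha\beta}(x/\varep)\,(\partial_{y_j}\chi_k^{\beta\gamma})(x/\varep)\,\partial_k v_\varep^\gamma$ using the definition (\ref{definition-of-Phi}) of $\Phi$, which (after a routine relabelling of dummy indices) yields
\[
n_i a_{ij}^{\alpha\beta}(x/\varep)\,\tfrac{\partial\chi_k^{\beta\gamma}}{\partial y_j}(x/\varep)\,\partial_k v_\varep^\gamma
= n_i\bigl[\hat a_{ik}^{\alpha\gamma} - a_{ik}^{\alpha\gamma}(x/\varep) - \Phi_{ik}^{\alpha\gamma}(x/\varep)\bigr]\partial_k v_\varep^\gamma.
\]
Substituting this back causes the contribution $n_i a_{ik}^{\alpha\gamma}(x/\varep)\partial_k v_\varep^\gamma$ to cancel the $-n_i a_{ij}^{\alpha\beta}(x/\varep)\partial_j v_\varep^\beta$ coming directly from $\partial_j v_\varep^\beta$, leaving the first two advertised terms $n_i a_{ij}^{\alpha\beta}(x/\varep)\partial_j u_\varep^\beta - n_i \hat a_{ij}^{\alpha\beta}\partial_j v_\varep^\beta$, together with a remaining $\Phi$-term $n_i \Phi_{ik}^{\alpha\gamma}(x/\varep)\partial_k v_\varep^\gamma$ and the $\chi$-term $-\varep n_i a_{ij}^{\alpha\beta}(x/\varep)\chi_k^{\beta\gamma}(x/\varep)\partial_j\partial_k v_\varep^\gamma$.

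The main obstacle (and the reason the antisymmetry of $\Psi$ is needed) is to convert the $\Phi$-term into the claimed tangential-derivative expression. Using the identity $\Phi_{ik}^{\alpha\gamma} = \partial_{y_j}\Psi_{jik}^{\alpha\gamma}$ from (\ref{definition-of-Psi}), I rewrite $\varep^{-1}\Phi_{ik}^{\alpha\gamma}(x/\varep)\partial_k v_\varep^\gamma$ as $\partial_j\{\Psi_{jik}^{\alpha\gamma}(x/\varep)\partial_k v_\varep^\gamma\}$ minus $\Psi_{jik}^{\alpha\gamma}(x/\varep)\partial_j\partial_k v_\varep^\gamma$. The asymmetric combination $n_i\partial_j - n_j\partial_i$ is forced on us because naively one has only the operator $n_i\partial_j$, but antisymmetrizing it is free: by $\Psi_{jik}^{\alpha\gamma} = -\Psi_{ijk}^{\alpha\gamma}$, one checks that $n_j\partial_i\{\Psi_{jik}^{\alpha\gamma}(x/\varep)\partial_k v_\varep^\gamma\}$ contributes $-\varep^{-1}n_i\Phi_{ik}^{\alpha\gamma}(x/\varep)\partial_k v_\varep^\gamma$ (after a dummy swap using the same identity $\partial_{y_i}\Psi_{ijk}^{\alpha\gamma} = \Phi_{jk}^{\alpha\gamma}$) and a second-order piece $n_j\Psi_{jik}^{\alpha\gamma}(x/\varep)\partial_i\partial_k v_\varep^\gamma = -n_i\Psi_{jik}^{\alpha\gamma}(x/\varep)\partial_j\partial_k v_\varep^\gamma$. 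The two first-order pieces add (instead of cancelling) and the two second-order pieces also add; dividing by $2$ produces exactly the symmetric identity
\[
n_i\Phi_{ik}^{\alpha\gamma}(x/\varep)\partial_k v_\varep^\gamma
= \tfrac{\varep}{2}(n_i\partial_j - n_j\partial_i)\bigl\{\Psi_{jik}^{\alpha\gamma}(x/\varep)\partial_k v_\varep^\gamma\bigr\}
- \varep n_i \Psi_{jik}^{\alpha\gamma}(x/\varep)\partial_j\partial_k v_\varep^\gamma.
\]
Adding this to the $\chi$-term $-\varep n_i a_{ij}^{\alpha\beta}(x/\varep)\chi_k^{\beta\gamma}(x/\varep)\partial_j\partial_k v_\varep^\gamma$ and recognizing $\Psi_{jik}^{\alpha\gamma} + a_{ij}^{\alpha\beta}\chi_k^{\beta\gamma} = b_{ijk}^{\alpha\gamma}$ from (\ref{definition-of-b}) completes the formula. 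The only delicate bookkeeping is in verifying that $n_j\partial_i$ really produces the correct sign when contracted with $\Psi_{jik}$; everything else is relabelling dummy indices and applying (\ref{definition-of-Phi})--(\ref{definition-of-Psi}).
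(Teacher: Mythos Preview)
Your proof is correct and follows essentially the same approach as the paper: both compute $n_i a_{ij}^{\alpha\beta}(x/\varep)\partial_j w_\varep^\beta$ directly to reach the intermediate expression containing $n_i\Phi_{ik}^{\alpha\gamma}(x/\varep)\partial_k v_\varep^\gamma$, and then use the antisymmetry $\Psi_{jik}=-\Psi_{ijk}$ to rewrite $\varep\, n_i\partial_j\{\Psi_{jik}(x/\varep)\partial_k v_\varep\}$ as $\tfrac{\varep}{2}(n_i\partial_j-n_j\partial_i)\{\Psi_{jik}(x/\varep)\partial_k v_\varep\}$. Your justification of this antisymmetrization step is in fact more explicit than the paper's, which passes directly from the first line of its equation~(\ref{equation-4}) to the second without comment.
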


\begin{proof}
A direct computation shows that
\begin{equation}\label{equation-3}
\aligned
n_i(x) a_{ij}^{\alpha\beta} (x/\varep) \frac{\partial w_\varep^\beta}{\partial x_j}
& =n_i(x) a_{ij}^{\alpha\beta}(x/\varep) \frac{\partial u_\varep^\beta}{\partial x_j}
-n_i(x) \hat{a}_{ij}^{\alpha\beta} \frac{\partial v_\varep^\beta}{\partial x_j}\\
&\qquad
+n_i(x) \Phi_{ik}^{\alpha\gamma} (x/\varep) \frac{\partial v_\varep^\gamma}
{\partial x_k}\\
&\qquad
-\varep n_i(x) a_{ij}^{\alpha\beta} (x/\varep) \chi_k^{\beta\gamma}
(x/\varep) \frac{\partial^2 v_\varep^\gamma}{\partial x_j\partial x_k},
\endaligned
\end{equation}
where $\Phi_{ik}^{\alpha\gamma} (y)$ is defined by (\ref{definition-of-Phi}).
By (\ref{definition-of-Psi}), we obtain
\begin{equation}\label{equation-4}
\aligned
n_i(x) \Phi_{ik}^{\alpha\gamma} (x/\varep) \frac{\partial v_\varep^\gamma}
{\partial x_k}
=&\varep n_i(x) \frac{\partial}{\partial x_j}
\left\{ \Psi_{jik}^{\alpha\gamma} (x/\varep) \frac{\partial v_\varep^\gamma}
{\partial x_k}\right\}\\
&\qquad\qquad
-\varep n_i(x) \Psi_{jik}^{\alpha\gamma} (x/\varep) \frac{\partial^2 v_\varep^\gamma}
{\partial x_j\partial x_k}\\
&=
\frac{\varep}{2}
\left\{ n_i(x)\frac{\partial}{\partial x_j} -n_j(x) \frac{\partial}{\partial x_i}\right\}
\left\{ \Psi_{jik}^{\alpha\gamma} (x/\varep) \frac{\partial v_\varep^\gamma}{\partial x_k}\right\}\\
&\qquad
\qquad
-\varep n_i(x) \Psi_{jik}^{\alpha\gamma} (x/\varep) \frac{\partial^2 v_\varep^\gamma}
{\partial x_j\partial x_k}.
\endaligned
\end{equation}
Equation (\ref{conormal-of-w}) now follows from (\ref{equation-3}) and (\ref{equation-4}).
\end{proof}

\begin{thm}\label{Neumann-theorem-1}
Let $\Omega$ be a bounded Lipschitz domain.
Suppose that $A\in \Lambda(\mu, \lambda, \tau)$ and
$A^*=A$.
Let $(u_\varep, u_0)$ be a solution of (\ref{equation-Neumann}) with
$\int_{\partial \Omega} u_\varep =\int_{\partial\Omega} u_0=0$.
Assume further that $u_0\in H^2(\Omega)$.
Then
\begin{equation}\label{square-Neumann-1}
\|\mathcal{M} (w_\varep)\|_{L^2(\partial\Omega)}
+\|w_\varep\|_{H^{1/2}(\Omega)}
+\left\{ \int_\Omega |\nabla w_\varep (x)|^2\delta (x)\, dx\right\}^{1/2}
\le C\varep \|u_0\|_{H^2(\Omega)},
\end{equation}
where $w_\varep^\alpha =u_\varep^\alpha (x) -u_0^\alpha (x) -\varep \chi_k^{\alpha\beta} (x/\varep)
\frac{\partial u_0^\beta}{\partial x_k}$.
\end{thm}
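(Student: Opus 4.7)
The plan is to proceed analogously to the proof of Theorem \ref{D-1-theorem}, replacing the Dirichlet boundary analysis by a Neumann one driven by Lemma \ref{conormal-lemma} and Theorem \ref{Neumann-problem-theorem}. Applying Lemma \ref{homo-lemma} with $v_\varep = u_0$ gives the interior equation $(\mathcal{L}_\varep w_\varep)^\alpha = \varep\,\partial_i\{b_{ijk}^{\alpha\gamma}(x/\varep)\partial_j\partial_k u_0^\gamma\}$ in $\Omega$. Applying Lemma \ref{conormal-lemma} with $v_\varep = u_0$ and using the Neumann matching $\partial u_\varep/\partial\nu_\varep = g = \partial u_0/\partial\nu_0$ on $\partial\Omega$ to cancel the first two terms on the right-hand side of (\ref{conormal-of-w}), the conormal derivative of $w_\varep$ on $\partial\Omega$ becomes
\[
\frac{\partial w_\varep}{\partial\nu_\varep} = \tfrac{\varep}{2}\bigl(n_i\partial_j - n_j\partial_i\bigr)\bigl\{\Psi_{jik}^{\alpha\gamma}(x/\varep)\partial_k u_0^\gamma\bigr\} - \varep\,n_i\,b_{ijk}^{\alpha\gamma}(x/\varep)\,\partial_j\partial_k u_0^\gamma.
\]

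I would decompose $w_\varep = z_\varep + \theta_\varep$, where $z_\varep\in H^1_0(\Omega)$ is the energy solution of $\mathcal{L}_\varep z_\varep = \varep\,\mathrm{div}\{b(x/\varep)\nabla^2 u_0\}$ vanishing on $\partial\Omega$. Since $b\in L^\infty(Y)$, the standard energy estimate yields $\|\nabla z_\varep\|_{L^2(\Omega)}\le C\varep\|u_0\|_{H^2(\Omega)}$; and since $z_\varep|_{\partial\Omega}=0$, integrating $\nabla z_\varep$ along normal paths $s\mapsto\Lambda_s(Q)$ (so that $z_\varep(\Lambda_t(Q)) = \int_0^t \nabla z_\varep(\Lambda_s(Q))\cdot\partial_s\Lambda_s(Q)\,ds$) and applying Cauchy--Schwarz with a coarea change of variables bounds $\|\mathcal{M}(z_\varep)\|_{L^2(\partial\Omega)} + \|z_\varep\|_{H^{1/2}(\Omega)} + \bigl(\int_\Omega|\nabla z_\varep|^2\delta(x)\,dx\bigr)^{1/2}$ by $C\varep\|u_0\|_{H^2(\Omega)}$.

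The remainder $\theta_\varep$ is $\mathcal{L}_\varep$-harmonic in $\Omega$. Subtracting the weak formulations for $w_\varep$ and $z_\varep$ and performing a tangential integration by parts on $\partial\Omega$ (using the antisymmetry $\Psi_{jik} = -\Psi_{ijk}$ from (\ref{definition-of-Psi})), I expect to obtain the identity
\[
\int_\Omega A(x/\varep)\nabla\theta_\varep\cdot\nabla\phi = -\tfrac{\varep}{2}\int_{\partial\Omega}\Psi_{jik}^{\alpha\gamma}(x/\varep)\,\partial_k u_0^\gamma\,(n_i\partial_j - n_j\partial_i)(\phi^\alpha)\,d\sigma
\]
for every $\phi\in H^1(\Omega)$ with trace in $H^1(\partial\Omega)$. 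The key point is that the problematic bulk trace $\varep\,n\cdot b(x/\varep)\nabla^2 u_0$ in $\partial w_\varep/\partial\nu_\varep$---which is not controllable in $L^2(\partial\Omega)$ from $u_0\in H^2(\Omega)$---cancels exactly against the boundary contribution produced by integrating the interior divergence source by parts. What remains is a pure tangential derivative of an $L^2(\partial\Omega)$ field of size $\varep\|\nabla u_0\|_{L^2(\partial\Omega)}\le C\varep\|u_0\|_{H^2(\Omega)}$.

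The genuine obstacle is that this effective Neumann datum is only in $H^{-1}(\partial\Omega)$, not in $L^2(\partial\Omega)$, so Theorem \ref{Neumann-problem-theorem} does not apply verbatim. I would resolve this via duality with the $L^2$-regularity Dirichlet estimate $\|(\nabla v)^*\|_{L^2(\partial\Omega)}\le C\|v|_{\partial\Omega}\|_{H^1(\partial\Omega)}$ from Theorem \ref{Dirichlet-problem-theorem} (this duality pair is exactly the $L^2$ Neumann/Dirichlet regularity theory of \cite{Kenig-Shen-1,Kenig-Shen-2}): for an $\mathcal{L}_\varep$-harmonic $\theta_\varep$ whose Neumann datum is the tangential derivative of an $L^2(\partial\Omega)$ field $h$, one obtains, after normalizing $\theta_\varep$ by its mean on $\partial\Omega$, the bound $\|\mathcal{M}(\theta_\varep)\|_{L^2(\partial\Omega)} + \|\theta_\varep\|_{H^{1/2}(\Omega)} + \bigl(\int_\Omega|\nabla\theta_\varep|^2\delta\,dx\bigr)^{1/2}\le C\|h\|_{L^2(\partial\Omega)}$. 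Applied with $h=\Psi(x/\varep)\nabla u_0$ and combined with the bounds on $z_\varep$ (and a control of the normalizing constant via $\int_\Omega w_\varep = O(\varep\|u_0\|_{H^2})$ from the normalizations $\int_\Omega u_\varep = \int_\Omega u_0 = 0$), this yields (\ref{square-Neumann-1}).
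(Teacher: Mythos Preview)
Your overall strategy---split $w_\varep$ into a piece absorbing the interior source and an $\mathcal{L}_\varep$-harmonic remainder whose Neumann datum is a pure tangential derivative, then estimate the latter by duality---is exactly the paper's. But your choice of $z_\varep\in H^1_0(\Omega)$ breaks the argument. With the Dirichlet $z_\varep$, the weak identity
\[
\int_\Omega A(x/\varep)\nabla z_\varep\cdot\nabla\phi \;=\; -\varep\int_\Omega b_{ijk}^{\alpha\gamma}(x/\varep)\,\partial_j\partial_k u_0^\gamma\,\partial_i\phi^\alpha
\]
holds only for $\phi\in H^1_0(\Omega)$, not for general $\phi\in H^1(\Omega)$. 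Hence the two bulk terms you want to cancel in $\int_\Omega A\nabla\theta_\varep\cdot\nabla\phi$ do \emph{not} cancel when $\phi$ has nonzero trace; the discrepancy is precisely the (unknown) combined conormal flux $n\cdot\bigl(A\nabla z_\varep + \varep\, b\nabla^2 u_0\bigr)$ of your $z_\varep$ on $\partial\Omega$, which you have no handle on. Consequently your displayed identity for $\theta_\varep$ is false, and since $\theta_\varep|_{\partial\Omega}=w_\varep|_{\partial\Omega}=u_\varep-u_0-\varep\chi\nabla u_0$ contains the unknown $u_\varep-u_0$, you cannot fall back on a Dirichlet estimate either.

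The paper resolves this by defining $z_\varep$ via the \emph{Neumann} problem (\ref{5.1.2}): one imposes $\bigl(\partial z_\varep/\partial\nu_\varep\bigr)^\alpha=-\varep\, n_i\, b_{ijk}^{\alpha\gamma}(x/\varep)\,\partial_j\partial_k u_0^\gamma$ on $\partial\Omega$ (together with $\int_\Omega z_\varep=0$). In weak form this is exactly the identity above for \emph{all} $\phi\in H^1(\Omega)$, so the cancellation you described becomes genuine and $\theta_\varep=w_\varep-z_\varep-\rho$ acquires the clean Neumann datum $\tfrac{\varep}{2}(n_i\partial_j-n_j\partial_i)\{\Psi_{jik}\partial_k u_0\}$. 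The energy estimate $\|z_\varep\|_{H^1(\Omega)}\le C\varep\|u_0\|_{H^2(\Omega)}$ is immediate from this weak form. From there your duality argument is correct and coincides with the paper's: test against the solution $\Theta_\varep$ of $\mathcal{L}_\varep\Theta_\varep=0$, $\partial\Theta_\varep/\partial\nu_\varep=h\in L^2(\partial\Omega)$, integrate the tangential derivative by parts, and invoke $\|\nabla\Theta_\varep\|_{L^2(\partial\Omega)}\le C\|h\|_{L^2(\partial\Omega)}$ from Theorem~\ref{Neumann-problem-theorem} to get $\|\theta_\varep\|_{L^2(\partial\Omega)}\le C\varep\|\nabla u_0\|_{L^2(\partial\Omega)}$; then Theorem~\ref{Dirichlet-problem-theorem} upgrades this to the full estimate. (One minor point: the hypothesis is $\int_{\partial\Omega}u_\varep=\int_{\partial\Omega}u_0=0$, not $\int_\Omega$; the constant $\rho$ is fixed by the boundary mean, as in the paper.)
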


As in the case of Dirichlet boundary conditions, Theorem \ref{Neumann-theorem-1}
gives the following convergence rate of $u_\varep$
to $u_0$ in $L^2$. As we mentioned in the Introduction,
the estimate $\|u_\varep -u_0\|_{L^2(\Omega)}
\le C\varep \| u_0\|_{H^2(\Omega)}$ was proved
in \cite{Moskow-Vogelius-2} when $\Omega$ is a curvilinear convex domain
in $\mathbb{R}^2$.

\begin{cor}\label{Neumann-cor-1}
Under the same assumptions as in Theorem \ref{Neumann-theorem-1}, we have
\begin{equation}\label{max-Neumann-1}
\| u_\varep -u_0\|_{L^2(\Omega)}
+\|\mathcal{M} (u_\varep -u_0)\|_{L^2(\partial\Omega)}
\le C\varep \| u_0\|_{H^2(\Omega)}.
\end{equation}
\end{cor}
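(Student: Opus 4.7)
The plan is to derive Corollary \ref{Neumann-cor-1} from Theorem \ref{Neumann-theorem-1} in exactly the same way that Corollary \ref{D-1-cor} was derived from Theorem \ref{D-1-theorem}; the decomposition (\ref{equation-4.0}) is the model to imitate. First I would write
$$
u_\varep^\alpha - u_0^\alpha = w_\varep^\alpha + \varep\, \chi_k^{\alpha\beta}(x/\varep)\, \frac{\partial u_0^\beta}{\partial x_k},
$$
and apply the triangle inequality to the two semi-norms on the left-hand side of (\ref{max-Neumann-1}).

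Next, I would estimate the contribution of the corrector term. By the remark following Lemma \ref{homo-lemma}, under the hypothesis $A\in\Lambda(\mu,\lambda,\tau)$ the correctors satisfy $\chi\in L^\infty(Y)$ with norm depending only on the structural constants. Hence
$$
\left\| \varep\, \chi(x/\varep)\nabla u_0 \right\|_{L^2(\Omega)} \le C\varep\,\|\nabla u_0\|_{L^2(\Omega)} \le C\varep\,\|u_0\|_{H^2(\Omega)},
$$
and
$$
\left\| \mathcal{M}\!\left(\varep\, \chi(x/\varep)\nabla u_0\right)\right\|_{L^2(\partial\Omega)}\le C\varep\, \|\mathcal{M}(\nabla u_0)\|_{L^2(\partial\Omega)} \le C\varep\,\|u_0\|_{H^2(\Omega)},
$$
where the last inequality is the weighted trace/maximal-function estimate (\ref{8.4.2}) already invoked in the Dirichlet case.

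Finally, the terms $\|w_\varep\|_{L^2(\Omega)} + \|\mathcal{M}(w_\varep)\|_{L^2(\partial\Omega)}$ are controlled by Theorem \ref{Neumann-theorem-1}, which bounds them (and the $H^{1/2}$ and weighted square-function norms of $w_\varep$) by $C\varep\,\|u_0\|_{H^2(\Omega)}$. Combining the three estimates yields (\ref{max-Neumann-1}). There is no essential obstacle here: the entire content of the corollary is packed into Theorem \ref{Neumann-theorem-1}, and what remains is the same routine triangle-inequality/trace estimate used to pass from Theorem \ref{D-1-theorem} to Corollary \ref{D-1-cor}, with the only change being that the Neumann rather than the Dirichlet bound on $w_\varep$ is invoked.
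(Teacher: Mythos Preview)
Your proposal is correct and follows exactly the approach the paper intends: the paper states Corollary \ref{Neumann-cor-1} immediately after Theorem \ref{Neumann-theorem-1} with the remark ``As in the case of Dirichlet boundary conditions,'' meaning the derivation (\ref{equation-4.0}) is to be repeated verbatim with the Neumann bound (\ref{square-Neumann-1}) in place of (\ref{D-1-w}). Your identification of the corrector bound via $\chi\in L^\infty$ and the maximal-function estimate (\ref{8.4.2}) for $\mathcal{M}(\nabla u_0)$ matches the paper's reasoning precisely.
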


\noindent{\it Proof of Theorem \ref{Neumann-theorem-1}. }

In view of Lemmas \ref{homo-lemma} and \ref{conormal-lemma}, we may write 
$w_\varep=\theta_\varep +z_\varep+\rho$,
where 
\begin{equation}\label{5.1.1}
\left\{
\aligned
&\mathcal{L}_\varep (\theta_\varep)=0 \qquad\qquad \text{ in }\Omega,\\
& \left(\frac{\partial \theta_\varep}{\partial\nu_\varep}\right)^\alpha
=\frac{\varep}{2}
\left\{ n_i(x)\frac{\partial}{\partial x_j} -n_j(x) \frac{\partial}{\partial x_i}\right\}
\left\{ \Psi_{jik}^{\alpha\gamma} (x/\varep) \frac{\partial u_0^\gamma}{\partial x_k}\right\}
\quad \text{ on } \partial\Omega,\\
& \theta_\varep\in H^1 (\Omega) \quad {and } \quad \int_{\partial\Omega} \theta_\varep=0,
\endaligned
\right.
\end{equation}
\begin{equation}\label{5.1.2}
\left\{
\aligned
&\left(\mathcal{L}_\varep (z_\varep)\right)^\alpha =\varep \frac{\partial}
{\partial x_i} \left\{ b_{ijk}^{\alpha\gamma} (x/\varep)
\frac{\partial^2 u_0^\gamma}{\partial x_j\partial x_k} \right\} \qquad \text{ in }\Omega,\\
& \left(\frac{\partial z_\varep}{\partial \nu_\varep}\right)^\alpha
=-\varep n_i(x) b_{ijk}^{\alpha\gamma} (x/\varep)
\frac{\partial^2 u_0^\gamma}{\partial x_j\partial x_k}
\qquad \text{ on }\partial\Omega\\
& z_\varep \in H^1(\Omega)\quad \text{ and } \quad \int_{\Omega} z_\varep =0,
\endaligned
\right.
\end{equation}
and 
$$
\rho=\frac{1}{|\partial\Omega|}\int_{\partial\Omega} (w_\varep-z_\varep)
$$
is a constant.
It follows from the energy estimates that $\| z_\varep\|_{H^1(\Omega)}
\le C\varep \| u_0\|_{H^2(\Omega)}$.
Also note that 
$$
|\rho|\le C \int_{\partial\Omega} |z_\varep| +C\varep \int_{\partial\Omega}
|\nabla u_0|
\le C \varep \|u_0\|_{H^2(\Omega)},
$$
where we have used the condition $\int_{\partial\Omega} u_\varep
=\int_{\partial\Omega} u_0 =0$.
Thus it remains only to estimate $\theta_\varep$.

To this end we use a duality argument and consider the $L^2$ Neumann problem
\begin{equation}\label{L-2-Neumann}
\left\{
\aligned
& \mathcal{L}_\varep (\Theta_\varep)   =0 \qquad \text{ in }\Omega,\\
& \frac{\partial \Theta_\varep}{\partial \nu_\varep}  =h \qquad \text{ on }\partial\Omega,\\
&\Theta_\varep \in H^1(\Omega) \quad\text{ and } \int_{\partial\Omega} \Theta_\varep  =0,
\endaligned
\right.
\end{equation}
where $h\in L^2 (\partial\Omega)$ and $\int_{\partial\Omega} h=0$.
It follows from integration by parts that
\begin{equation}\label{5.1.3}
\aligned
\big|\int_{\partial\Omega} \theta_\varep \cdot h\big|
&=
\big| \int_{\partial\Omega} \theta_\varep\cdot \frac{\partial \Theta_\varep}{\partial\nu_\varep}\big|
=\big|\int_{\partial\Omega}
\Theta_\varep \cdot \frac{\partial \theta_\varep}{\partial\nu_\varep}\big|\\
&=\frac{\varep}{2}
\bigg|
\int_{\partial\Omega}
\left\{ n_i \frac{\partial}{\partial x_j} -n_j \frac{\partial}{\partial x_i}\right\}
\Theta_\varep^\alpha \cdot
\Psi_{jik}^{\alpha\gamma} (x/\varep) \frac{\partial u_0^\gamma}{\partial x_k}\bigg|\\
&\le
C\varep \|\nabla \Theta_\varep \|_{L^2(\partial\Omega)}
\|\nabla u_0\|_{L^2(\partial\Omega)},
\endaligned
\end{equation}
where we have used the fact that $n_i \frac{\partial}{\partial x_j}-n_j \frac{\partial}
{\partial x_i}$ is a tangential derivative for $1\le i, j\le d$.
In view of Theorem  \ref{Neumann-problem-theorem}
we have $\|\nabla \Theta_\varep \|_{L^2(\partial\Omega)}
\le C \| h\|_{L^2(\partial\Omega)}$.
Hence, by (\ref{5.1.3}) and duality, we obtain $\|\theta_\varep\|_{L^2(\partial\Omega)}
\le C \varep \|\nabla u_0\|_{L^2(\partial\Omega)}$.
Here we also use the fact $\int_{\partial \Omega} \theta_\varep =0$.

Finally, we use the estimates for the $L^2$ Dirichlet problem in Theorem 
\ref{Dirichlet-problem-theorem} to see that
$$
\aligned
\|\theta_\varep\|_{L^2(\Omega)}
& +\|\mathcal{M}(\theta_\varep)\|_{L^2(\Omega)}
+\left\{ \int_\Omega |\nabla \theta_\varep(x)|^2\delta (x)\, dx \right\}^{1/2}\\
&\le C\|\theta_\varep\|_{L^2(\partial\Omega)}
\le C\varep \|\nabla u_0\|_{L^2(\partial\Omega)}
\le C\varep \|u_0\|_{H^2(\Omega)}.
\endaligned
$$
This, together with the estimates of
$z_\varep$ and $\rho$,
 completes the proof of Theorem \ref{Neumann-theorem-1}.
\qed

\begin{remark}\label{remark-5.1}
{\rm
The estimates in Theorem \ref{Neumann-theorem-1} and Corollary \ref{Neumann-cor-1}
also hold under the condition
$\int_\Omega u_\varep =\int_\Omega u_0=0$.
In this case the constant $\rho$ is given by
$\rho=\frac{1}{|\Omega|}\int_\Omega (w_\varep-\theta_\varep)$, and we have
$$
|\rho|\le C\varep \int_\Omega |\nabla u_0| +C\int_\Omega |\theta_\varep|
\le C\varep \|u_0\|_{H^2(\Omega)}.
$$
This will be used in the proof of the error estimate 
for the Neumann eigenvalues for $\mathcal{L}_\varep$.}
\end{remark}

\section{Neumann boundary condition, part II}

In this section we extend the results on convergence rates in Section 4 to the case of Neumann 
boundary conditions.

\noindent{\it Construction of the first-order term. }
Fix $F\in L^2(\Omega)$ and $g\in L^2(\partial\Omega)$ such that
$\int_\Omega F +\int_{\partial\Omega} g=0$.
Let $(u_\varep, u_0)$ be the solution of (\ref{equation-Neumann})
with $\int_{\partial\Omega} u_\varep =\int_{\partial\Omega}u_0 =0$.
Consider 
\begin{equation}\label{Neumann-w-2}
w_\varep^\alpha =u^\alpha_\varep -u^\alpha_0
-\varep\chi_k^{\alpha\beta}(x/\varep) \frac{\partial v_\varep^\beta}{\partial x_k},
\end{equation}
where $v_\varep$ is the solution of (\ref{equation-for-v-varep}) in $\Omega_\varep$
with
$f$  given by $u_0|_{\partial\Omega}$ and $f_\varep (Q)=f(\Lambda_\varep^{-1}(Q))$.
Note that by Theorem \ref{Neumann-problem-theorem} (for operators with constant coefficients), 
\begin{equation}\label{6.0}
\| f\|_{H^1(\partial\Omega)}\le C \left\{ \|g\|_{L^2(\partial\Omega)}
+\| F\|_{L^2(\partial\Omega)} \right\}.
\end{equation}

\begin{thm}\label{Neumann-theorem-2}
Let $\Omega$ be a bounded Lipschitz domain.
Suppose that $A\in \Lambda(\mu, \lambda,\tau)$ and $A^*=A$.
Let $w_\varep$ be defined by (\ref{Neumann-w-2}).
Then, if $0<\varep< (1/2)$, 
\begin{equation}\label{Neumann-interior-estimate-of-w}
\| w_\varep\|_{L^2(\partial\Omega)}
+\| w_\varep\|_{L^2(\Omega)}\le C\varep |\ln (\varep)|^a
\left\{ \|F\|_{L^2(\Omega)} +\| g\|_{L^2(\partial\Omega)}\right\},
\quad \text{ for any } a>1/2,
\end{equation}

\begin{equation}\label{Neumann-max-estimate}
\| \mathcal{M} (w_\varep)\|_{L^2(\partial\Omega)}\le C\varep |\ln (\varep)|^a
\left\{ \|F\|_{L^2(\Omega)} +\| g\|_{L^2(\partial\Omega)}\right\},
\quad \text{ for any } a>3/2,
\end{equation}
and
\begin{equation}\label{Neumann-square-estimate}
\|w_\varep\|_{H^{1/2}(\Omega)} 
+\left\{ \int_\Omega |\nabla w_\varep (x)|^2 \delta (x)\, dx\right\}^{1/2}
\le C\varep |\ln \varep|
\left\{ \|F\|_{L^2(\Omega)} +\| g\|_{L^2(\partial\Omega)}\right\},
\end{equation}
where  $C$ depends only on
$\mu$, $\lambda$, $\tau$, $d$, $m$, $a$ and $\Omega$.
\end{thm}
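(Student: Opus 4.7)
The approach is to adapt the strategy used for Theorem~\ref{Dirichlet-2-theorem} to the Neumann setting, combining it with the duality technique from the proof of Theorem~\ref{Neumann-theorem-1}. Set
\[
\widetilde{w}_\varep^\alpha = u_\varep^\alpha - v_\varep^\alpha - \varep\,\chi_k^{\alpha\beta}(x/\varep)\frac{\partial v_\varep^\beta}{\partial x_k} = w_\varep^\alpha - (v_\varep - u_0)^\alpha.
\]
The $L^2$ regularity estimate for the Neumann problem, applied to $\mathcal{L}_0$ (see \eqref{6.0}), controls $\|u_0\|_{H^1(\partial\Omega)}$ by $\|F\|_{L^2(\Omega)}+\|g\|_{L^2(\partial\Omega)}$, so the analogues of Lemmas~\ref{lemma-4.1} and \ref{lemma-4.2} go through with this data. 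Consequently $v_\varep-u_0$ contributes at most $C\varep$ to each norm on the left-hand side of \eqref{Neumann-interior-estimate-of-w}--\eqref{Neumann-square-estimate}, and it suffices to prove the stated estimates with $\widetilde{w}_\varep$ in place of $w_\varep$.

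By Lemma~\ref{homo-lemma} (noting $\mathcal{L}_\varep u_\varep = F = \mathcal{L}_0 v_\varep$ in $\Omega$) and Lemma~\ref{conormal-lemma}, $\widetilde{w}_\varep$ satisfies $\mathcal{L}_\varep\widetilde{w}_\varep = \varep\,\partial_i(b_{ijk}(x/\varep)\partial_j\partial_k v_\varep)$ in $\Omega$ and, on $\partial\Omega$,
\[
\frac{\partial \widetilde{w}_\varep}{\partial\nu_\varep} = \bigl[g - n\hat{a}\nabla v_\varep\bigr] + \tfrac{\varep}{2}(n_i\partial_j - n_j\partial_i)\bigl[\Psi_{jik}(x/\varep)\partial_k v_\varep\bigr] - \varep\, n_i b_{ijk}(x/\varep)\partial_j\partial_k v_\varep.
\]
Decompose $\widetilde{w}_\varep = \widetilde{\theta}_\varep + \widetilde{z}_\varep + \rho$, where $\widetilde{\theta}_\varep$ is the $\mathcal{L}_\varep$-Neumann solution whose data is only the tangential-derivative term above, $\widetilde{z}_\varep$ carries the interior source together with the remaining boundary data $g - n\hat{a}\nabla v_\varep - \varep n_i b_{ijk}(x/\varep)\partial_j\partial_k v_\varep$ (and thus absorbs the $O(1)$ piece $g - n\hat{a}\nabla v_\varep = n\hat{a}\nabla(u_0-v_\varep)$), both normalized to have zero boundary mean, and $\rho$ is the compensating constant, which is $O(\varep)$ by direct computation.

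For $\widetilde{\theta}_\varep$, the duality argument in the proof of Theorem~\ref{Neumann-theorem-1} applies with $u_0$ replaced by $v_\varep$: pairing against the $L^2$-Neumann solution $\Theta_\varep$ of $\mathcal{L}_\varep$ with data $h\in L^2(\partial\Omega)$ and using the skew-adjointness of $n_i\partial_j - n_j\partial_i$ yields
\[
\Big|\int_{\partial\Omega}\widetilde{\theta}_\varep\cdot h\,d\sigma\Big| \le C\varep\|\nabla v_\varep\|_{L^2(\partial\Omega)}\|\nabla\Theta_\varep\|_{L^2(\partial\Omega)}\le C\varep\|h\|_{L^2(\partial\Omega)}
\]
by Theorem~\ref{Neumann-problem-theorem} and Lemma~\ref{lemma-4.1}. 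Thus $\|\widetilde{\theta}_\varep\|_{L^2(\partial\Omega)}\le C\varep$, and Theorem~\ref{Dirichlet-problem-theorem}, applied to $\widetilde{\theta}_\varep$ as an $\mathcal{L}_\varep$-harmonic function, delivers the remaining norms on the left-hand side of \eqref{Neumann-interior-estimate-of-w}--\eqref{Neumann-square-estimate} with the same $C\varep$ bound.

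For $\widetilde{z}_\varep$, Green's identity combined with $\mathcal{L}_0(u_0-v_\varep)=0$ in $\Omega$ gives the clean weak formulation
\[
\int_\Omega a_{ij}(x/\varep)\partial_j\widetilde{z}_\varep^\beta\,\partial_i\psi^\alpha\, dx = \int_\Omega \mathbf{H}_\varep\cdot\nabla\psi\, dx, \qquad \psi\in H^1(\Omega),
\]
with $\mathbf{H}_\varep = -\varep\, b(x/\varep)\nabla^2 v_\varep + \hat{a}\nabla(u_0-v_\varep)$; the $O(1)$ boundary term is absorbed by integrating by parts against the $\mathcal{L}_0$-harmonicity of $u_0-v_\varep$. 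Lemma~\ref{lemma-4.1} and the weighted bound on $\nabla(v_\varep-u_0)$ give $\{\int_\Omega|\mathbf{H}_\varep|^2\delta\,dx\}^{1/2}\le C\varep$, and the trick from \eqref{4.4.2} ($\delta\le C\delta_\varep|\ln\varep|^a$) promotes this to $\{\int_\Omega|\mathbf{H}_\varep|^2\delta\phi_a(\delta)\,dx\}^{1/2}\le C\varep|\ln\varep|^{a/2}$. The estimates for $\widetilde{z}_\varep$ then follow from a Neumann analog of Lemma~\ref{lemma-4.3}: a weak Neumann solution driven by a vector-field source $\mathbf{H}$ is controlled in the $L^2(\Omega)$, $L^2(\partial\Omega)$, $H^{1/2}(\Omega)$, radial-maximal, and square-function norms by $C_a\{\int|\mathbf{H}|^2\delta\phi_a(\delta)\}^{1/2}$ for appropriate admissible $a$ ($a>1$, $a>3$, or $a=2$, paralleling \eqref{4.3.1}--\eqref{4.3.3}). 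Establishing this Neumann analog—along the same lines as Lemma~\ref{lemma-4.3}, now using the Neumann Green's function of $\mathcal{L}_\varep$, the fundamental-solution asymptotics of \cite{AL-1991}, the $L^2$ Neumann regularity of Theorem~\ref{Neumann-problem-theorem}, and the weighted potential estimates proved in Section~8—is the principal new ingredient; granted it, assembling the three blocks yields \eqref{Neumann-interior-estimate-of-w}--\eqref{Neumann-square-estimate}.
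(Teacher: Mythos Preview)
Your overall architecture matches the paper: reduce to $\widetilde{w}_\varep$ via Lemmas~\ref{lemma-4.1}--\ref{lemma-4.2} and (\ref{6.0}), decompose $\widetilde{w}_\varep=\widetilde{\theta}_\varep+\widetilde{z}_\varep+\rho$, and handle $\widetilde{\theta}_\varep$ by the duality argument of Theorem~\ref{Neumann-theorem-1}. You are also right to carry the boundary piece $g-n\hat a\nabla v_\varep=\partial(u_0-v_\varep)/\partial\nu_0$ into $\widetilde{z}_\varep$; the paper's stated decomposition (\ref{6.3.1})--(\ref{6.3.2}) silently drops it, and your conversion of it into the interior source $\hat a\nabla(u_0-v_\varep)$ via $\mathcal{L}_0(u_0-v_\varep)=0$ is exactly the clean fix.

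Where you diverge is in estimating $\widetilde{z}_\varep$. You postulate a ``Neumann analog of Lemma~\ref{lemma-4.3}'' as the principal new ingredient and only sketch how it might be proved. The paper never needs such a lemma. Instead it runs the \emph{same} duality trick a second time: pair $\widetilde{z}_\varep$ on $\partial\Omega$ against $h$, let $\Theta_\varep$ solve the $\mathcal{L}_\varep$-Neumann problem with data $h$, and integrate by parts to get $\int_{\partial\Omega}\widetilde{z}_\varep\cdot h=\int_\Omega \mathbf H_\varep\cdot\nabla\Theta_\varep$; then weighted Cauchy--Schwarz and $\|(\nabla\Theta_\varep)^*\|_{L^2(\partial\Omega)}\le C\|h\|_{L^2(\partial\Omega)}$ from Theorem~\ref{Neumann-problem-theorem} give $\|\widetilde{z}_\varep\|_{L^2(\partial\Omega)}\le C\bigl\{\int_\Omega|\mathbf H_\varep|^2\delta\phi_a\bigr\}^{1/2}$ for $a>1$. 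With the boundary trace controlled, the \emph{existing} Dirichlet Lemma~\ref{lemma-4.3} applies directly to $\widetilde{z}_\varep$ (take $g=\widetilde{z}_\varep|_{\partial\Omega}$ and interior source $\varep\, b(x/\varep)\nabla^2 v_\varep$), yielding all the remaining norms. This is shorter and avoids developing any new potential-theoretic machinery.

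One smaller gap in your sketch: for the piece $\hat a\nabla(u_0-v_\varep)$ of $\mathbf H_\varep$, the monotonicity trick from (\ref{4.4.2}) does not by itself give $\int_\Omega|\nabla(u_0-v_\varep)|^2\,\delta\phi_a(\delta)\le C\varep^2|\ln\varep|^a$, because $u_0-v_\varep$ is defined only on $\Omega$ and $\phi_a(\delta)$ is unbounded near $\partial\Omega$. One also needs $\|\nabla(u_0-v_\varep)\|_{L^2(\Omega)}^2\le C\varep$, which follows by interpolating $\|f-v_\varep\|_{L^2(\partial\Omega)}\le C\varep$ against $\|f-v_\varep\|_{H^1(\partial\Omega)}\le C$ and invoking $H^1$ regularity for the constant-coefficient Dirichlet problem; then split the integral at $\delta\sim\varep$.
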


Observe that
$$
\aligned
\|\nabla v_\varep\|_{L^2(\Omega)}
+\|\mathcal{M} (\nabla v_\varep)\|_{L^2(\partial\Omega)}
&\le C\left\{
\| F\|_{L^2(\Omega)}
+\| f\|_{H^1(\partial\Omega)}\right\}\\
& \le C \left\{ 
\| F\|_{L^2(\Omega)}
+\| g\|_{L^2(\partial\Omega)}\right\}.
\endaligned
$$
Thus, as a corollary of Theorem \ref{Neumann-theorem-2},
 we obtain the following convergence rates of $u_\varep$ to $u_0$ in $L^2$.

\begin{cor}\label{cor-6.1}
Under the same conditions as in Theorem \ref{Neumann-theorem-2}, we have
\begin{equation}\label{Neumann-2-rate}
\|u_\varep -u_0\|_{L^2(\partial\Omega)}
+\| u_\varep-u_0 \|_{L^2(\Omega)}\le C\varep |\ln (\varep)|^{\frac12 +\sigma}
\left\{ \|F\|_{L^2(\Omega)} +\| g\|_{L^2(\partial\Omega)}\right\},
\end{equation}
\begin{equation}\label{Neumann-2-max-rate}
\| \mathcal{M} (u_\varep-u_0)\|_{L^2(\partial \Omega)}\le C\varep |\ln \varep|^{\frac32 +\sigma}
\left\{ \|F\|_{L^2(\Omega)} +\| g\|_{L^2(\partial\Omega)}\right\},
\end{equation}
for any $\sigma>0$.
\end{cor}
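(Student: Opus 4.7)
\noindent\emph{Proof plan.}
The plan is to follow the structure of the proof of Theorem \ref{Dirichlet-2-theorem}. I would set
\[
\widetilde{w}_\varep^\alpha = u_\varep^\alpha - v_\varep^\alpha - \varep \chi_k^{\alpha\beta}(x/\varep)\frac{\partial v_\varep^\beta}{\partial x_k},
\]
so that $w_\varep = \widetilde{w}_\varep + (v_\varep - u_0)$ in $\Omega$, and estimate the two pieces separately. Since $\mathcal{L}_\varep u_\varep = F = \mathcal{L}_0 v_\varep$ in $\Omega$ (as $\widetilde{F}|_\Omega = F$), Lemmas \ref{homo-lemma} and \ref{conormal-lemma} apply to $\widetilde{w}_\varep$. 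First I would carry through the analogs of Lemmas \ref{lemma-4.1} and \ref{lemma-4.2}: the regularity estimate (\ref{6.0}) controls $\|f\|_{H^1(\partial\Omega)}$ by $\|F\|_{L^2(\Omega)}+\|g\|_{L^2(\partial\Omega)}$, so the Dirichlet problem defining $v_\varep$ in $\Omega_\varep$ has $H^1$ boundary data of the right size, and the same arguments used in Section 4 give $\|\mathcal{M}_\varep(\nabla v_\varep)\|_{L^2(\partial\Omega_\varep)} + (\int_{\Omega_\varep}|\nabla^2 v_\varep|^2\delta_\varep\,dx)^{1/2}\le C$ together with $\|v_\varep - u_0\|_{L^2(\partial\Omega)} + \|(v_\varep-u_0)^*\|_{L^2(\partial\Omega)} + \|v_\varep - u_0\|_{H^{1/2}(\Omega)} + (\int_\Omega |\nabla(v_\varep - u_0)|^2 \delta\,dx)^{1/2} \le C\varep$. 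This reduces the theorem to bounding $\widetilde{w}_\varep$.

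Lemma \ref{homo-lemma} gives $\mathcal{L}_\varep \widetilde{w}_\varep = \varep\partial_i\{b_{ijk}^{\alpha\gamma}(x/\varep) \partial_j \partial_k v_\varep^\gamma\}$ in $\Omega$, and Lemma \ref{conormal-lemma} gives
\[
\Big(\frac{\partial \widetilde{w}_\varep}{\partial \nu_\varep}\Big)^\alpha = (g-\partial_{\nu_0} v_\varep)^\alpha + \frac{\varep}{2}(n_i\partial_j - n_j\partial_i)\{\Psi_{jik}^{\alpha\gamma}(x/\varep)\partial_k v_\varep^\gamma\} - \varep n_i b_{ijk}^{\alpha\gamma}(x/\varep)\partial_j \partial_k v_\varep^\gamma
\]
on $\partial\Omega$. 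Combining $\int_\Omega F + \int_{\partial\Omega} g = 0$ with $\int_{\partial\Omega}\partial_{\nu_0}v_\varep\,dS = -\int_\Omega F\,dx$ (divergence theorem, using $\mathcal{L}_0 v_\varep = F$ in $\Omega$) confirms global Neumann compatibility. Mirroring the proof of Theorem \ref{Neumann-theorem-1}, I would decompose $\widetilde{w}_\varep = \widetilde{\theta}_\varep + \widetilde{z}_\varep + \rho$, where $\widetilde{\theta}_\varep$ is $\mathcal{L}_\varep$-harmonic with Neumann data equal to the first two terms above and $\widetilde{z}_\varep$ solves the (automatically compatible) Neumann problem with interior source $\varep\partial_i(b(x/\varep)\partial^2 v_\varep)$ and boundary datum $-\varep n_i b(x/\varep)\partial^2 v_\varep$. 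For $\widetilde{z}_\varep$ I would invoke a Neumann counterpart of Lemma \ref{lemma-4.3}, built from the weighted $L^2$ potential estimates of Section 8, to obtain
\[
\|\widetilde{z}_\varep\|_{L^2(\Omega)} + \|\mathcal{M}(\widetilde{z}_\varep)\|_{L^2(\partial\Omega)} + \Big(\int_\Omega |\nabla \widetilde{z}_\varep|^2\delta\,dx\Big)^{1/2} \le C_a\,\varep\,\Big(\int_\Omega |\nabla^2 v_\varep|^2\delta(x)\phi_a(\delta(x))\,dx\Big)^{1/2},
\]
and then convert to $C\varep|\ln\varep|^{a/2}$ via $\delta(x)\phi_a(\delta(x)) \le C\delta_\varep(x)|\ln\varep|^a$ and the square-function bound from the first step, exactly as in (\ref{4.4.2}).

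For $\widetilde{\theta}_\varep$ I would use duality with the $L^2$ Neumann problem as in (\ref{5.1.3}): given $h\in L^2(\partial\Omega)$ with $\int h=0$ and $\Theta_\varep$ the Neumann solution of $\mathcal{L}_\varep\Theta_\varep=0$, $\partial_{\nu_\varep}\Theta_\varep = h$, one has
\[
\int_{\partial\Omega}\widetilde{\theta}_\varep\cdot h\,dS = \int_{\partial\Omega}(g-\partial_{\nu_0} v_\varep)\cdot \Theta_\varep\,dS + \frac{\varep}{2}\int_{\partial\Omega}(n_i\partial_j - n_j\partial_i)\Theta_\varep^\alpha \cdot \Psi_{jik}^{\alpha\gamma}(x/\varep)\partial_k v_\varep^\gamma\,dS.
\]
The tangential piece is controlled by $C\varep\|\nabla\Theta_\varep\|_{L^2(\partial\Omega)}\|\nabla v_\varep\|_{L^2(\partial\Omega)} \le C\varep\|h\|_{L^2(\partial\Omega)}$ via Theorem \ref{Neumann-problem-theorem} and the $L^2(\partial\Omega)$ control of $\nabla v_\varep$. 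The main obstacle is the first integral: writing $g-\partial_{\nu_0} v_\varep = -\partial_{\nu_0}(v_\varep - u_0)$ and integrating by parts against $\mathcal{L}_0(v_\varep - u_0)=0$ produces $-\int_\Omega \hat{A}\nabla\Theta_\varep \cdot \nabla(v_\varep - u_0)\,dx$. I would split $\hat{A} = A(x/\varep) + (\hat{A}-A(x/\varep))$: using $\mathcal{L}_\varep\Theta_\varep = 0$, the $A(x/\varep)$-piece becomes the boundary pairing $-\int_{\partial\Omega}(v_\varep - u_0)\cdot h\,dS$, of size $C\varep\|h\|_{L^2}$ by the first step; the oscillatory remainder $\int_\Omega(\hat{A}-A(x/\varep))\nabla\Theta_\varep\cdot\nabla(v_\varep - u_0)\,dx$ must then be absorbed by exploiting the divergence structure of $\hat{A}-A(x/\varep)$ coming from the flux correctors $\Psi$ (\ref{definition-of-Phi})--(\ref{definition-of-Psi}) together with the weighted square-function control on $\nabla(v_\varep - u_0)$ and the weighted potential estimates for $\nabla\Theta_\varep$ from Section 8; this is where the factor $|\ln\varep|$ enters. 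Once $\|\widetilde{\theta}_\varep\|_{L^2(\partial\Omega)} \le C\varep|\ln\varep|^a$ is obtained, Theorem \ref{Dirichlet-problem-theorem} upgrades this to the bounds on $\mathcal{M}(\widetilde{\theta}_\varep)$, $\|\widetilde{\theta}_\varep\|_{H^{1/2}(\Omega)}$ and the square function. The constant $\rho$ is controlled as in Remark \ref{remark-5.1}, and together with the first step these bounds yield the theorem.
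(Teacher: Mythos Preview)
Your overall architecture matches the paper: split $w_\varep=\widetilde w_\varep+(v_\varep-u_0)$, control $v_\varep-u_0$ via Lemmas~\ref{lemma-4.1}--\ref{lemma-4.2} and (\ref{6.0}), and decompose $\widetilde w_\varep$ into a harmonic Neumann piece, a piece carrying $\varep\,\text{div}(b\nabla^2 v_\varep)$, and a constant.  Two substantive differences deserve comment.

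\textbf{The term $g-\partial_{\nu_0}v_\varep$.}  You are right that Lemma~\ref{conormal-lemma} produces this term in $\partial_{\nu_\varep}\widetilde w_\varep$; the paper's decomposition (\ref{6.3.1})--(\ref{6.3.2}) writes only the tangential $\Psi$--term and the $b$--term and does not explicitly treat it.  Your proposed handling, however, is more intricate than needed and the step ``absorb the oscillatory remainder $\int_\Omega(\hat A-A(x/\varep))\nabla\Theta_\varep\cdot\nabla(v_\varep-u_0)$ via the flux correctors'' is not convincing as written: $\hat A-A$ is \emph{not} itself a divergence (only $\Phi=\hat A-A-A\nabla\chi$ is), so after you peel off $\Phi$ you are left with an $A\nabla\chi$ contribution carrying no $\varep$.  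A cleaner argument avoids this split entirely.  Since $v_\varep-u_0$ solves the $L^2$ Dirichlet problem for $\mathcal L_0$ with data of size $O(\varep)$ on $\partial\Omega$ (Lemma~\ref{lemma-4.2}), one has $\|\partial_{\nu_0}(v_\varep-u_0)\|_{H^{-1}(\partial\Omega)}\le C\varep$; pairing against $\Theta_\varep$, which by Theorem~\ref{Neumann-problem-theorem} satisfies $\|\Theta_\varep\|_{H^1(\partial\Omega)}\le C\|h\|_{L^2(\partial\Omega)}$, gives $\big|\int_{\partial\Omega}(g-\partial_{\nu_0}v_\varep)\cdot\Theta_\varep\big|\le C\varep\|h\|_{L^2(\partial\Omega)}$ with no logarithm.

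\textbf{The piece $\widetilde z_\varep$.}  You propose a ``Neumann counterpart of Lemma~\ref{lemma-4.3}'', which is not developed in the paper.  The paper's device is sharper and avoids building any new lemma: it first bounds $\|\widetilde z_\varep\|_{L^2(\partial\Omega)}$ by the same duality against $\Theta_\varep$ (integration by parts gives $\big|\int_{\partial\Omega}\widetilde z_\varep\cdot h\big|=\varep\big|\int_\Omega b(x/\varep)\nabla^2v_\varep\cdot\nabla\Theta_\varep\big|$, and a weighted Cauchy--Schwarz with $\delta\phi_a$ and $(\delta\phi_a)^{-1}$ yields (\ref{6.3.5})), and then feeds this into the \emph{Dirichlet} Lemma~\ref{lemma-4.3} by regarding $\widetilde z_\varep$ as the solution of $\mathcal L_\varep\widetilde z_\varep=\varep\,\text{div}(b\nabla^2v_\varep)$ with Dirichlet data $\widetilde z_\varep|_{\partial\Omega}$.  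This reuses Section~8 verbatim and is worth adopting.
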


Without loss of generality we will assume that $\|F\|_{L^2(\Omega)}
+\| g\|_{L^2(\partial\Omega)}\le 1$ in the rest of this section.
We remark that because of (\ref{6.0}), the estimates of $\nabla v_\varep$
in Lemmas \ref{lemma-4.1} and \ref{lemma-4.2} continue to hold. 

\medskip

\noindent{\it Proof of Theorem \ref{Neumann-theorem-2}. }
We proceed as in the case of
Dirichlet condition and write
$$
w_\varep^\alpha 
=\widetilde{w}_\varep^\alpha + \big\{ v^\alpha_\varep -u_0^\alpha\big\}
\quad \text{ and } \quad 
\widetilde{w}_\varep^\alpha
=u_\varep^\alpha (x)-v_\varep^\alpha (x)-\varep\chi_k^{\alpha\beta}
(x/\varep)\frac{\partial v_\varep^\beta}{\partial x_k}.
$$
The desired estimates for $v_\varep -u_0$ follow directly from Lemmas
\ref{lemma-4.1}-\ref{lemma-4.2} and (\ref{6.0}). 

Next we let $\widetilde{w}_\varep=\widetilde{\theta}_\varep
 +\widetilde{z}_\varep +\rho$, where
\begin{equation}\label{6.3.1}
\left\{
\aligned
&\mathcal{L}_\varep (\widetilde{\theta}_\varep)=0 \qquad\qquad \text{ in }\Omega,\\
& \left(\frac{\partial \widetilde{\theta}_\varep}{\partial\nu_\varep}\right)^\alpha
=\frac{\varep}{2}
\left\{ n_i\frac{\partial}{\partial x_j} -n_j \frac{\partial}{\partial x_i}\right\}
\left\{ \Psi_{jik}^{\alpha\gamma} (x/\varep) \frac{\partial v_\varep^\gamma}{\partial x_k}\right\}
\quad \text{ on } \partial\Omega,\\
& \widetilde{\theta}_\varep\in H^1 (\Omega) \quad {and } 
\quad \int_{\partial\Omega} \widetilde{\theta}_\varep=0,
\endaligned
\right.
\end{equation}
\begin{equation}\label{6.3.2}
\left\{
\aligned
&\left(\mathcal{L}_\varep (\widetilde{z}_\varep)\right)^\alpha =\varep \frac{\partial}
{\partial x_i} \left\{ b_{ijk}^{\alpha\gamma} (x/\varep)
\frac{\partial^2 v_\varep^\gamma}{\partial x_j\partial x_k} \right\} \qquad \text{ in }\Omega,\\
& \left(\frac{\partial \widetilde{z}_\varep}{\partial \nu_\varep}\right)^\alpha
=-\varep n_i(x) b_{ijk}^{\alpha\gamma} (x/\varep)
\frac{\partial^2 v_\varep^\gamma}{\partial x_j\partial x_k}
\qquad \text{ on }\partial\Omega\\
& \widetilde{z}_\varep \in H^1(\Omega)\quad
 \text{ and } \quad \int_{\partial\Omega} \widetilde{z}_\varep =0,
\endaligned
\right.
\end{equation}
and 
$$
\rho=\frac{1}{|\partial\Omega|}\int_{\partial\Omega} \widetilde{w}_\varep
=\frac{1}{|\partial\Omega|}\int_{\partial\Omega} {w_\varep}
-\frac{1}{|\partial\Omega|}\int_{\partial\Omega} (v_\varep -u_0)
$$
is a constant. Note that
$$
|\rho|\le C\varep \|\nabla v_\varep\|_{L^2(\partial\Omega)}
+C \|v_\varep -u_0\|_{L^2(\partial\Omega)}
\le C\varep,
$$
where we have used the fact $\int_{\partial \Omega} u_\varep=
\int_{\partial\Omega} u_0 =0$ as well as Lemmas \ref{lemma-4.1} and \ref{lemma-4.2}.

By a duality argument similar to that in the proof of Theorem \ref{Neumann-theorem-1},
we have
$$
\|\widetilde{\theta}_\varep\|_{L^2(\partial\Omega)}
\le C\varep \|\nabla v_\varep\|_{L^2(\partial\Omega)}
\le C\varep.
$$
It then follows from the estimates for the $L^2$ Dirichlet problem
in Theorem \ref{Dirichlet-problem-theorem} that
$$
\aligned
\|\mathcal{M}(\widetilde{\theta}_\varep)\|_{L^2(\partial\Omega)}
+& \|\widetilde{\theta}_\varep\|_{H^{1/2}(\Omega)}
+ \left\{ \int_\Omega
|\nabla \widetilde{\theta}_\varep|^2\delta (x)\, dx \right\}^{1/2}\\
& \le C\|\widetilde{\theta}_\varep\|_{L^2(\partial\Omega)}
\le C\varep.
\endaligned
$$

The estimates of $\widetilde{z}_\varep$ also relies on a duality estimate.
Indeed, let $\Theta_\varep\in H^1(\Omega)$ be the solution of (\ref{L-2-Neumann}) with
$h\in L^2(\partial\Omega)$ and $\int_{\partial\Omega} h=0$.
It follows from integration by parts that
\begin{equation}\label{6.3.3} 
\aligned
\bigg|\int_{\partial\Omega} \widetilde{z}_\varep \cdot h\bigg|
&=\bigg| \int_{\partial\Omega} \widetilde{z}_\varep \cdot \frac{\partial \Theta_\varep}{\partial 
\nu_\varep}\bigg|\\
&=\bigg| \int_\Omega
a_{ij}^{\alpha\beta} (x/\varep)
\frac{\partial \widetilde{z}_\varep^\alpha}{\partial x_i}\cdot \frac{\partial \Theta_\varep^\beta}
{\partial x_j}\bigg|\\
&=\varep\bigg|\int_\Omega
b_{ijk}^{\alpha\gamma} (x/\varep)  \frac{\partial^2 v_\varep^\gamma}
{\partial x_j\partial x_k}
\cdot \frac{\partial \Theta_\varep^\alpha}{\partial x_i}\bigg|.
\endaligned
\end{equation}
By the Cauchy inequality this gives
\begin{equation}\label{6.3.4}
\aligned
&\bigg|\int_{\partial\Omega} \widetilde{z}_\varep \cdot h\bigg|\\
&\le
 \varep
\left\{ \int_\Omega |\nabla^2 v_\varep|^2 \delta (x)\phi_a (\delta(x))\, dx\right\}^{1/2}
\left\{ \int_\Omega |\nabla \Theta_\varep |^2 \frac{dx}{\delta(x)\phi_a (\delta(x))}\right\}^{1/2}.
\endaligned
\end{equation}
Observe that if $a>1$,
$$
\left\{ \int_\Omega |\nabla \Theta_\varep |^2 \frac{dx}{\delta(x)\phi_a (\delta(x))}\right\}^{1/2}
\le C \|(\nabla \Theta_\varep)^*\|_{L^2(\partial\Omega)}
\le C\| h\|_{L^2(\partial\Omega)}.
$$
Hence, by (\ref{6.3.4}) and duality, we obtain
\begin{equation}\label{6.3.5}
\| \widetilde{z}_\varep\|_{L^2(\partial\Omega)}
\le C \varep \left\{ \int_\Omega |\nabla^2 v_\varep|^2 \delta (x)\phi_a (\delta(x))\, dx\right\}^{1/2}
\end{equation}
for any $a>1$.
With (\ref{6.3.5}) at our disposal we may apply Lemma \ref{lemma-4.3} to obtain
$$
\|\widetilde{z}_\varep\|_{L^2(\Omega)}
\le C \varep 
\left\{ \int_\Omega |\nabla^2 v_\varep|^2 \delta (x)\phi_a (\delta(x))\, dx\right\}^{1/2}
\le C\varep |\ln(\varep)|^{a/2},
$$
where we have used (\ref{4.4.2}) for the last inequality
This, together with estimates of $\widetilde{\theta}_\varep$, $\rho$ and $v_\varep-u_0$,
 gives (\ref{Neumann-interior-estimate-of-w}).
Estimates (\ref{Neumann-max-estimate}) and (\ref{Neumann-square-estimate})
follow from Lemma \ref{lemma-4.3} and (\ref{6.3.5}) in the same manner.
This completes the proof of Theorem \ref{Neumann-theorem-2}.
\qed

\begin{remark}\label{remark-6.1}
{\rm
The estimates in Theorem \ref{Neumann-theorem-2} and Corollary \ref{cor-6.1}
continue to hold under the condition 
$\int_\Omega u_\varep =\int_\Omega u_0 =0$.
In this case one has
$$
\rho =\frac{1}{|\Omega|}
\int_\Omega \big\{ \widetilde{w}_\varep-\widetilde{\theta}_\varep 
-\widetilde{z}_\varep \big\}
=\frac{1}{|\Omega|}
\int_\Omega  w_\varep
-\frac{1}{|\Omega|} \int_\Omega \big\{ v_\varep -u_0\big\}
-\frac{1}{|\Omega|} \int_\Omega \big\{ \widetilde{\theta}_\varep +\widetilde{z}_\varep\big\}.
$$
Hence, 
$$
\aligned
|\rho|&
\le C\varep \|\nabla v_\varep \|_{L^2(\Omega)}
+C \| v_\varep -u_0\|_{L^2(\Omega)}
+C \|\widetilde{\theta}_\varep\|_{L^2(\Omega)}
+C \|\widetilde{z}_\varep\|_{L^2(\Omega)}\\
&\le C\varep |\ln (\varep)|^{a/2}.
\endaligned
$$
for any $a>1$. The rest of the proof is the same.
}
\end{remark}

\medskip

\noindent{\it Proof of Theorem \ref{main-Neumann-theorem}. }
Estimate (\ref{main-Dirichlet-estimate-1}) for the Neumann boundary conditions
 is given in Corollary \ref{Neumann-cor-1}
and estimate (\ref{main-Neumann-estimate-2}) in Corollary \ref{cor-6.1}.
\qed

\section{Convergence rates for eigenvalues}

In this section we study the convergence rates for Dirichlet, Neumann, and Steklov eigenvalues
associated with $\{ \mathcal{L}_\varep\}$.
Our approach relies on the following theorem, whose proof may be found in \cite[pp.338-345]{Jikov-1994}.

\begin{thm}\label{eigenvalue-theorem}
Let $\{ T_\varep, \varep\ge 0\}$ be a family of 
bounded, positive, self-adjoint, compact operators on a Hilbert space $\mathcal{H}$.
Suppose that (1) $\|T_\varep\|\le C$ and $\|T_\varep f -T_0 f \|
\to 0$ as $\varep\to 0$ for all $f\in H$;
(2) $\{ T_\varep f_\varep, \varep>0\}$ is pre-compact in $\mathcal{H}$,
whenever $\{  f_\varep\}$ is bounded in $\mathcal{H}$.
Let $\{ u_\varep^k \}$ be an orthornomal basis of $\mathcal{H}$ consisting of eigenvectors
of $T_\varep$ with the corresponding eigenvalues $\{ \mu_\varep^k\}$
in a decreasing order, 
$$
\mu_\varep^1\ge \mu_\varep^2
\ge \cdots \ge \mu_\varep^k \ge \cdots >0.
$$
Then $\mu_\varep^k \ge c_k >0$, and if $\varep>0$ is sufficiently small,
$$
|\mu_\varep^k -\mu_0^k|
\le 2 \sup \big \{ \|T_\varep u - T_0 u\|:\ 
u\in N(\mu_0^k, T_0) \text{ and } \| u\|=1\big\},
$$
where $N(\mu_0^k, T_0)$ is the eigenspace of $T_0$
associated with eigenvalue $\mu_0^k$.
\end{thm}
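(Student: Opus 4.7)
The proof plan relies on the Courant-Fischer min-max characterization of eigenvalues of compact positive self-adjoint operators, combined with a quasi-eigenvector argument for the quantitative step. First I would establish qualitative spectral convergence $\mu_\varep^k \to \mu_0^k$ for each fixed $k$ (which automatically yields the lower bound $\mu_\varep^k \geq c_k > 0$ for small $\varep$), and then refine this to the sharper estimate in the statement.

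For the qualitative part, one starts from
\[
\mu_\varep^k = \max_{\dim V = k}\ \min_{u \in V,\ \|u\| = 1} \langle T_\varep u, u\rangle
\]
and takes as test subspace $V = \text{span}\{u_0^1, \ldots, u_0^k\}$ to bound $\mu_\varep^k$ from below by a quantity tending to $\mu_0^k$, using the strong convergence hypothesis (1). The matching upper bound is obtained by the symmetric argument, applied to $k$-dimensional subspaces spanned by eigenvectors of $T_\varep$ and extracting weak subsequential limits; here the collective-compactness condition (2) upgrades weak to strong convergence of $T_\varep f_\varep$, so no mass escapes in the limit. This yields $\mu_\varep^k \to \mu_0^k$ for every $k$.

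For the quantitative bound, set $\eta_\varep := \sup\{\|T_\varep u - T_0 u\| : u \in N(\mu_0^k, T_0),\ \|u\|=1\}$ and fix a unit vector $u \in N(\mu_0^k, T_0)$. Since $T_0 u = \mu_0^k u$, expanding $u = \sum_j c_j u_\varep^j$ in the eigenbasis of $T_\varep$ gives
\[
\eta_\varep^2 \geq \|T_\varep u - T_0 u\|^2 = \sum_j (\mu_\varep^j - \mu_0^k)^2 |c_j|^2,
\]
which forces the mass of $(c_j)$ to concentrate on indices $j$ with $|\mu_\varep^j - \mu_0^k|$ small. Let $m$ denote the multiplicity of $\mu_0^k$ and $\Pi_\varep$ the spectral projection of $T_\varep$ onto $\{u_\varep^j : |\mu_\varep^j - \mu_0^k| \leq 2\eta_\varep\}$. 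Applying the previous display to each element of an orthonormal basis of $N(\mu_0^k, T_0)$ shows that $\Pi_\varep$ restricts to an injection on $N(\mu_0^k, T_0)$, so $\mathrm{rank}(\Pi_\varep) \geq m$. Step 1 then guarantees that for $\varep$ small, $\mu_\varep^{k-1}$ and $\mu_\varep^{k+m}$ stay uniformly separated from $\mu_0^k$, so $\Pi_\varep$ is exactly the projection onto $\text{span}\{u_\varep^k, \ldots, u_\varep^{k+m-1}\}$, giving $|\mu_\varep^j - \mu_0^k| \leq 2\eta_\varep$ for every $j$ in that cluster, in particular for $j = k$.

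The main obstacle is the index-matching in this final step. The quasi-eigenvector calculation by itself only detects the existence of \emph{some} eigenvalue of $T_\varep$ within $\eta_\varep$ of $\mu_0^k$; it does not rule out the possibility that $\mu_\varep^k$ (in the prescribed decreasing order) has been displaced far away while some unrelated $\mu_\varep^j$ with $j \neq k$ sits close to $\mu_0^k$. It is precisely the qualitative spectral convergence from Step 1, and the two-sided spectral gap it produces between $\{\mu_\varep^{k-1}, \mu_\varep^{k+m}\}$ and $\mu_0^k$, that rigidly identifies the cluster captured by $\Pi_\varep$ with $\{\mu_\varep^k, \ldots, \mu_\varep^{k+m-1}\}$ and closes the argument.
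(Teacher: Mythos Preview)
The paper does not supply its own proof of this theorem; it merely cites \cite[pp.~338--345]{Jikov-1994}. Your outline is correct and is in fact essentially the standard argument found there: qualitative spectral convergence via the min--max principle together with the collective-compactness hypothesis, followed by the quasi-eigenvector estimate $\sum_j (\mu_\varep^j-\mu_0^k)^2|c_j|^2\le\eta_\varep^2$ and a rank count on the spectral projection to pin down the cluster.

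One small indexing slip worth cleaning up: when you invoke the separation of $\mu_\varep^{k-1}$ and $\mu_\varep^{k+m}$ from $\mu_0^k$, you are tacitly assuming that $k$ is the \emph{first} index at which the eigenvalue $\mu_0^k$ occurs. In general the given $k$ may sit anywhere inside the block $\{k_0,\dots,k_0+m-1\}$ of indices carrying that eigenvalue, so the spectral gap should be stated for $\mu_\varep^{k_0-1}$ and $\mu_\varep^{k_0+m}$. With that adjustment your argument shows $|\mu_\varep^j-\mu_0^k|\le 2\eta_\varep$ for every $j$ in the block, and in particular for the given $k$.
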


\medskip

\noindent{\it Dirichlet eigenvalues. }
Given $f\in \mathcal{H}=L^2(\Omega)$, let $T^D_\varep (f)=u_\varep\in H^1_0(\Omega)$ be 
the weak solution of
$\mathcal{L}_\varep (u_\varep)=f$ in $\Omega$.
It is easy to see that $\{T^D_\varep, \varep\ge 0\}$ satisfies
the assumptions in Theorem \ref{eigenvalue-theorem}.
Recall that $\lambda_\varep$ is a Dirichlet eigenvalue for $\mathcal{L}_\varep$ in $\Omega$
if there exists a nonzero $u_\varep\in H^1_0(\Omega)$ such that
$\mathcal{L}_\varep (u_\varep)=\lambda_\varep u_\varep$ in $\Omega$.
Let $\{ \lambda_\varep^k\}$ denote the sequence of Dirichlet eigenvalues in
an increasing order for $\mathcal{L}_\varep$.
Then $\{ (\lambda_\varep^k)^{-1}\}$
is the sequence of eigenvalues in a decreasing order for $T^D_\varep$
on $L^2(\Omega)$.
It follows from Theorem \ref{eigenvalue-theorem} that if $\varep$ is sufficiently small,
\begin{equation}\label{eigenvalue-inequality}
\left|\frac{1}{\lambda_\varep^k}
-\frac{1}{\lambda_0^k}\right |
\le 2 \sup\big\{
\| u_\varep -u_0\|_{L^2(\Omega)}\big\},
\end{equation}
where the supremum is taken over all $u_\varep, \, u_0\in H^1_0(\Omega)$ with the property that
$\lambda_0^k \|u_0\|_{L^2(\Omega)}=1$
and $\mathcal{L}_\varep (u_\varep)=\mathcal{L}_0 (u_0) =\lambda_0^k u_0$ in $\Omega$.
Note that if $\Omega$ is $C^{1,1}$ (or convex in the case $m=1$), then $\|u_0\|_{H^{2}(\Omega)}\le C_k$
by the standard regularity theory for second-order elliptic systems with constant coefficients.
Also, by Theorem \ref{eigenvalue-theorem}, we see that $\lambda_\varep^k \le C_k$. 
Hence, we may deduce from (\ref{eigenvalue-inequality}) and Corollary \ref{D-1-cor} that
 $|\lambda_\varep^k -\lambda_0^k|\le c_k\, \varep$
(see e.g. \cite[p.347]{Jikov-1994}).
However, if $\Omega$ is a general Lipschitz domain,
$u_0\in H^2(\Omega)$ no longer holds.
Nevertheless, Corollary \ref{cor-4.1} gives us the following.

\begin{thm}\label{Dirichlet-eigenvalue-theorem-1}
Let $0\le \varep\le (1/2)$ and
$\{\lambda^k_\varep\}$ be the sequence of Dirichlet eigenvalues in an increasing order of
$\mathcal{L}_\varep$ in a bounded Lipschitz domain. Then for any $\sigma>0$,
$$
|\lambda_\varep^k -\lambda_0^k|\le c\, \varep |\ln (\varep)|^{\frac12 +\sigma},
$$
where $c $ depends on $k$ and $\sigma$, but not $\varep$.
\end{thm}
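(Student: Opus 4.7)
The plan is to apply Theorem~\ref{eigenvalue-theorem} to the solution operator $T^D_\varep: L^2(\Omega)\to L^2(\Omega)$ defined by $T^D_\varep(f)=u_\varep$, where $u_\varep\in H^1_0(\Omega)$ is the weak solution of $\mathcal{L}_\varep(u_\varep)=f$ in $\Omega$. The standard energy estimate gives $\|T^D_\varep\|\le C$ uniformly in $\varep$, Rellich's theorem provides the required compactness of $\{T^D_\varep f_\varep\}$ in $L^2(\Omega)$ whenever $\{f_\varep\}$ is bounded, and the strong $L^2$-convergence $T^D_\varep f\to T^D_0 f$ for each fixed $f$ is the basic qualitative homogenization result. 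The operators $T^D_\varep$ are positive and self-adjoint on $L^2(\Omega)$ because $A$ is symmetric.

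With those hypotheses verified, Theorem~\ref{eigenvalue-theorem} yields inequality (\ref{eigenvalue-inequality}): for all sufficiently small $\varep$,
\begin{equation*}
\left|\frac{1}{\lambda_\varep^k}-\frac{1}{\lambda_0^k}\right|
\le 2\sup\big\{\|u_\varep-u_0\|_{L^2(\Omega)}\big\},
\end{equation*}
where the supremum runs over pairs $(u_\varep,u_0)\in H^1_0(\Omega)\times H^1_0(\Omega)$ with $\mathcal{L}_\varep(u_\varep)=\mathcal{L}_0(u_0)=\lambda_0^k u_0$ in $\Omega$ and $\lambda_0^k\|u_0\|_{L^2(\Omega)}=1$. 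Each such pair is a pair of Dirichlet solutions with right-hand side $F=\lambda_0^k u_0$ and boundary data $f=0$, so Corollary~\ref{cor-4.1} applies and yields
\begin{equation*}
\|u_\varep-u_0\|_{L^2(\Omega)}
\le C_\sigma\,\varep|\ln\varep|^{\frac12+\sigma}\,\|\lambda_0^k u_0\|_{L^2(\Omega)}
=C_\sigma\,\varep|\ln\varep|^{\frac12+\sigma},
\end{equation*}
where the last equality uses the normalization $\lambda_0^k\|u_0\|_{L^2(\Omega)}=1$.

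It remains to convert the estimate on $|1/\lambda_\varep^k-1/\lambda_0^k|$ into the advertised estimate on $|\lambda_\varep^k-\lambda_0^k|$. Writing $|\lambda_\varep^k-\lambda_0^k|=\lambda_\varep^k\lambda_0^k|1/\lambda_\varep^k-1/\lambda_0^k|$, we need a uniform upper bound $\lambda_\varep^k\le C_k$. This is precisely the conclusion $\mu_\varep^k\ge c_k>0$ in Theorem~\ref{eigenvalue-theorem} applied to $\mu_\varep^k=(\lambda_\varep^k)^{-1}$, and combining it with the previous display gives $|\lambda_\varep^k-\lambda_0^k|\le c_{k,\sigma}\,\varep|\ln\varep|^{\frac12+\sigma}$.

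I expect no serious obstacle: the only subtlety is invoking Corollary~\ref{cor-4.1} rather than the sharper $H^2$ bound used for smooth or convex domains, since in a general Lipschitz domain the eigenfunctions $u_0$ need not lie in $H^2(\Omega)$, so the $O(\varep)$ rate of Corollary~\ref{D-1-cor} is unavailable and is necessarily replaced by the logarithmic factor coming from Corollary~\ref{cor-4.1}. The remaining ingredients---verifying the hypotheses of Theorem~\ref{eigenvalue-theorem} and controlling $\lambda_\varep^k$ from above---are standard.
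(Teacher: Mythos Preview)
Your proposal is correct and follows essentially the same route as the paper: apply Theorem~\ref{eigenvalue-theorem} to the Dirichlet solution operator $T^D_\varep$, invoke Corollary~\ref{cor-4.1} (in place of Corollary~\ref{D-1-cor}, since $u_0\notin H^2(\Omega)$ in a general Lipschitz domain) to bound the right-hand side of (\ref{eigenvalue-inequality}), and use the uniform bound $\lambda_\varep^k\le C_k$ from Theorem~\ref{eigenvalue-theorem} to pass from reciprocals to the eigenvalues themselves.
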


\medskip

\noindent{\it Neumann eigenvalues. }
Given $f\in L^2(\Omega)$ with $ \int_\Omega f=0$, let
${T}^N_\varep (f)=u_\varep \in H^1(\Omega)$ be the weak solution of
the Neumann problem: $\mathcal{L}_\varep (u_\varep)=f$ in $\Omega$,
 $\frac{\partial u_\varep}{\partial\nu_\varep} =0$ on $\partial\Omega$ and
$\int_\Omega u_\varep =0$.
Again, it is easy to verify that the family of
operators $\{ {T}^N_\varep\}$ on the Hilbert space
$\{ f\in L^2(\Omega): \int_\Omega f=0\}$
satisfies the assumptions
of Theorem \ref{eigenvalue-theorem}.
Recall that $\rho_\varep$ is a Neumann eigenvalue for $\mathcal{L}_\varep$ in $\Omega$ if
there exists a nonzero $u_\varep\in H^1(\Omega)$ such that
$\mathcal{L}_\varep (u_\varep)=\rho_\varep u_\varep$ in $\Omega$
and $\frac{\partial u_\varep}{\partial\nu_\varep}=0$ on $\partial\Omega$.
Let $\{ \rho_\varep^k\}$ be the sequence of nonzero Neumann eigenvalues in
an increasing order for $ \mathcal{L}_\varep$ in $\Omega$.
Then $\{(\rho_\varep^k)^{-1}\}$ is the sequence of eigenvalues in a decreasing order
for ${T}^N_\varep$.
Thus, in view of Theorems \ref{eigenvalue-theorem} and Remarks \ref{remark-5.1} and
\ref{remark-6.1}, we obtain the following.

\begin{thm}\label{Neumann-eigenvalue-theorem}
Let $0\le \varep\le (1/2)$
and $\{ \rho_\varep^k\}$ denote the sequence of nonzero Neumann eigenvalues in an increasing order
for $\mathcal{L}_\varep$ in a bounded Lipschitz domain $\Omega$. 
Then for any $\sigma>0$,
$$
|\rho_\varep^k -\rho_0^k|\le c\, \varep |\ln (\varep)|^{\frac12 +\sigma},
$$
where $c$ depends on $k$ and $\sigma$, but not $\varep$.
Furthermore, the estimate 
$|\rho_\varep^k -\rho_0^k|\le c_k \, \varep$ holds 
if $\Omega$ is $C^{1,1}$ (or convex in the case $m=1$).
\end{thm}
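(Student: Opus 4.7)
\noindent\textit{Proof proposal for Theorem \ref{Neumann-eigenvalue-theorem}.}
The plan is to mirror the argument sketched for Dirichlet eigenvalues, but to apply the abstract spectral theorem (Theorem \ref{eigenvalue-theorem}) to the Neumann solution operator $T^N_\varep$ on the Hilbert space $\mathcal{H}=\{f\in L^2(\Omega):\int_\Omega f=0\}$ and then feed in the $L^2$ convergence rate established in Corollary \ref{cor-6.1} (together with Remark \ref{remark-6.1}). First, one checks that $\{T^N_\varep\}_{\varep\ge 0}$ satisfies hypotheses (1) and (2) of Theorem \ref{eigenvalue-theorem}. Uniform boundedness $\|T^N_\varep\|\le C$ is immediate from the energy estimate, while compactness and strong convergence $T^N_\varep f\to T^N_0 f$ follow from the weak $H^1$ convergence and the compact embedding $H^1(\Omega)\hookrightarrow L^2(\Omega)$; this part is routine.

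Next, Theorem \ref{eigenvalue-theorem} applied to $T^N_\varep$ yields, for $\varep$ sufficiently small,
\begin{equation*}
\left|\frac{1}{\rho_\varep^k}-\frac{1}{\rho_0^k}\right|\le 2\sup\bigl\{\|u_\varep-v_0\|_{L^2(\Omega)}\bigr\},
\end{equation*}
where the supremum is taken over all eigenfunctions $u_0$ of $\mathcal{L}_0$ (with Neumann data $0$ and $\int_\Omega u_0=0$) corresponding to $\rho_0^k$, normalized by $\|u_0\|_{L^2(\Omega)}=1$, and where $v_0:=u_0/\rho_0^k=T^N_0 u_0$ and $u_\varep:=T^N_\varep u_0$. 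Thus $u_\varep$ and $v_0$ are respectively the weak solutions of the Neumann problems
\begin{equation*}
\mathcal{L}_\varep(u_\varep)=u_0,\quad \tfrac{\partial u_\varep}{\partial\nu_\varep}=0\text{ on }\partial\Omega,\quad \int_\Omega u_\varep=0,
\end{equation*}
and the corresponding problem for $\mathcal{L}_0$, so the hypotheses of Corollary \ref{cor-6.1} apply with $F=u_0$ and $g=0$. Invoking Remark \ref{remark-6.1} to switch from the normalization $\int_{\partial\Omega} u=0$ to $\int_\Omega u=0$, we obtain for any $\sigma>0$
\begin{equation*}
\|u_\varep-v_0\|_{L^2(\Omega)}\le C_\sigma\,\varep|\ln\varep|^{\frac12+\sigma}\|u_0\|_{L^2(\Omega)}=C_\sigma\,\varep|\ln\varep|^{\frac12+\sigma}.
\end{equation*}

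To pass from the reciprocal estimate back to $|\rho_\varep^k-\rho_0^k|$, one writes $|\rho_\varep^k-\rho_0^k|=\rho_\varep^k\rho_0^k\,|1/\rho_\varep^k-1/\rho_0^k|$ and notes that Theorem \ref{eigenvalue-theorem} also guarantees $\rho_\varep^k\le c_k$ uniformly in small $\varep$. This yields the Lipschitz-domain estimate $|\rho_\varep^k-\rho_0^k|\le c\,\varep|\ln\varep|^{\frac12+\sigma}$. For the $C^{1,1}$ (or convex, $m=1$) case, one uses $H^2$ regularity for the constant-coefficient system $\mathcal{L}_0$, which gives $\|v_0\|_{H^2(\Omega)}\le C\|u_0\|_{L^2(\Omega)}$ with constant depending on $\rho_0^k$; then Corollary \ref{Neumann-cor-1} with Remark \ref{remark-5.1} replaces the logarithmic bound by $\|u_\varep-v_0\|_{L^2(\Omega)}\le C\varep\|v_0\|_{H^2(\Omega)}\le c_k\varep$, yielding $|\rho_\varep^k-\rho_0^k|\le c_k\varep$.

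The only subtle point, and the one I expect to take the most care, is the careful bookkeeping of the normalization in Theorem \ref{eigenvalue-theorem}: the abstract theorem requires eigenfunctions normalized in $\mathcal{H}$, whereas Corollaries \ref{Neumann-cor-1} and \ref{cor-6.1} were originally stated under $\int_{\partial\Omega}u=0$. This is precisely why Remarks \ref{remark-5.1} and \ref{remark-6.1} were recorded, so the obstruction is merely notational and the rest of the argument is mechanical. No new estimate on $u_\varep-u_0$ is needed beyond what Sections 5 and 6 already provide.
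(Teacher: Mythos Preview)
Your proposal is correct and follows essentially the same approach as the paper: apply Theorem \ref{eigenvalue-theorem} to the operators $T^N_\varep$ on $\{f\in L^2(\Omega):\int_\Omega f=0\}$, then invoke Corollary \ref{cor-6.1} with Remark \ref{remark-6.1} in the Lipschitz case and Corollary \ref{Neumann-cor-1} with Remark \ref{remark-5.1} in the $C^{1,1}$ case. You have in fact spelled out more of the details (the reciprocal-to-eigenvalue passage, the uniform bound $\rho_\varep^k\le c_k$, and the role of the normalization remarks) than the paper, which simply records the setup and refers to the relevant results.
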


\medskip

\noindent{\it Steklov eigenvalues. }
We say $s_\varep$ is a Steklov eigenvalue for $\mathcal{L}_\varep$ in $\Omega$ if there exists a 
nonzero $u_\varep \in H^1(\Omega)$ such that $\mathcal{L}_\varep (u_\varep)
=0$ in $\Omega$ and $\frac{\partial u_\varep}{\partial \nu_\varep}
=s_\varep |\partial\Omega|^{-1} u_\varep$ on $\partial\Omega$.
Note that $s_\varep |\partial\Omega|^{-1}$ is also an eigenvalue of the Dirichlet-to-Neumann
map associated with $\mathcal{L}_\varep$. Given $g\in L^2(\partial\Omega)$
with $\int_{\partial\Omega} g=0$,
let $S_\varep (g)=u_\varep|_{\partial\Omega}$, where $u_\varep$
is the weak solution to the $L^2$ Neumann problem: $\mathcal{L}_\varep (u_\varep)=0$
in $\Omega$, $\frac{\partial u_\varep}{\partial\nu_\varep} =g$ on $\partial\Omega$
and $\int_{\partial\Omega} u_\varep =0$.
It is not hard to verify that the family of
operators $\{ S_\varep\}$ on the Hilbert space
$\{ g\in L^2(\partial\Omega): \int_{\partial\Omega}g=0 \}$
satisfies the assumptions in Theorem \ref{eigenvalue-theorem}.
Consequently, the $L^2(\partial\Omega)$ convergence estimates
in Corollaries \ref{Neumann-cor-1} and \ref{cor-6.1}
give the following.

\begin{thm}\label{Steklov-eigenvalue-theorem}
Let $0\le \varep\le (1/2)$ and
$\{ s_\varep^k\}$ denote the sequence of nonzero Steklov eigenvalues in an increasing order
for $\mathcal{L}_\varep$ in a bounded Lipschitz domain $\Omega$. 
Then for any $\sigma>0$,
$$
|s_\varep^k -s_0^k|\le c\, \varep |\ln (\varep)|^{\frac12 +\sigma},
$$
where $c$ depends on $k$ and $\sigma$, but not $\varep$.
Furthermore, the estimate 
$|s_\varep^k -s_0^k|\le c_k \, \varep$ holds if $\Omega$ is $C^{1,1}$
(or convex in the case $m=1$).
\end{thm}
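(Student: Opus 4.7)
\noindent\textit{Proof proposal for Theorem \ref{Steklov-eigenvalue-theorem}.}
The plan is to apply the abstract spectral convergence result, Theorem \ref{eigenvalue-theorem}, to the family of Dirichlet-to-Neumann-inverse operators $\{S_\varep\}$ on the Hilbert space
$$
\mathcal{H}=\Big\{g\in L^2(\partial\Omega):\ \int_{\partial\Omega}g=0\Big\},
$$
with $S_\varep (g)=u_\varep|_{\partial\Omega}$, where $u_\varep\in H^1(\Omega)$ solves $\mathcal{L}_\varep(u_\varep)=0$ in $\Omega$, $\partial u_\varep/\partial\nu_\varep=g$ on $\partial\Omega$, and $\int_{\partial\Omega}u_\varep=0$. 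The hypotheses of Theorem \ref{eigenvalue-theorem} have already been invoked in the paper for this family, and they follow from: (i) the uniform $L^2$ Neumann estimate $\|(\nabla u_\varep)^*\|_{L^2(\partial\Omega)}\le C\|g\|_{L^2(\partial\Omega)}$ of Theorem \ref{Neumann-problem-theorem}, together with the Poincaré inequality on $\partial\Omega$ using the mean-zero normalization, giving uniform boundedness; (ii) self-adjointness and positivity via Green's identity and the symmetry $A=A^*$; (iii) compactness via the compact embedding $H^{1/2}(\partial\Omega)\hookrightarrow L^2(\partial\Omega)$; (iv) strong convergence $S_\varep g\to S_0 g$ from Corollary \ref{cor-6.1}; and (v) uniform pre-compactness of $\{S_\varep f_\varep\}$ for bounded $\{f_\varep\}$ from the uniform $H^1$ bound on $u_\varep$.

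Next I would identify the Steklov eigenvalues of $\mathcal{L}_\varep$ with those of $S_\varep$. If $u_\varep\not\equiv 0$ satisfies $\mathcal{L}_\varep(u_\varep)=0$ in $\Omega$ and $\partial u_\varep/\partial\nu_\varep = s_\varep |\partial\Omega|^{-1}u_\varep$ on $\partial\Omega$, then (after normalizing $\int_{\partial\Omega}u_\varep=0$) the function $g:=s_\varep |\partial\Omega|^{-1}u_\varep|_{\partial\Omega}$ lies in $\mathcal{H}$ and $S_\varep(g)=|\partial\Omega|\, s_\varep^{-1}g$. Thus $\{|\partial\Omega|/s_\varep^k\}$ is the sequence of eigenvalues of $S_\varep$ listed in decreasing order. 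Theorem \ref{eigenvalue-theorem} then yields $s_\varep^k\ge c_k>0$ uniformly in $\varep$, and, for small $\varep$,
$$
\bigg|\frac{|\partial\Omega|}{s_\varep^k}-\frac{|\partial\Omega|}{s_0^k}\bigg|
\le 2\sup\Big\{\|S_\varep g-S_0 g\|_{L^2(\partial\Omega)}:\ g\in N(|\partial\Omega|/s_0^k,\, S_0),\ \|g\|_{L^2(\partial\Omega)}=1\Big\}.
$$

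Finally, I would bound the right-hand side by applying the Neumann $L^2(\partial\Omega)$ convergence rates already established. For $g$ in the finite-dimensional eigenspace $N(|\partial\Omega|/s_0^k, S_0)$ with $\|g\|_{L^2(\partial\Omega)}=1$, the pair $u_\varep=S_\varep g$, $u_0=S_0 g$ satisfies the Neumann systems (\ref{equation-Neumann}) with $F=0$ and data $g$. Corollary \ref{cor-6.1} then gives
$$
\|u_\varep -u_0\|_{L^2(\partial\Omega)}\le C\,\varep|\ln\varep|^{\frac12+\sigma}
$$
for any $\sigma>0$, and the conclusion $|s_\varep^k-s_0^k|\le c\,\varep|\ln\varep|^{\frac12+\sigma}$ follows by combining with the lower bound $s_\varep^k\ge c_k$ (so that inverting the reciprocals is harmless). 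In the $C^{1,1}$ case, elliptic regularity for $\mathcal{L}_0$ (with constant coefficients, or via convexity when $m=1$) gives $u_0\in H^2(\Omega)$ with $\|u_0\|_{H^2(\Omega)}\le C_k$, and Corollary \ref{Neumann-cor-1} upgrades the bound to the sharp $O(\varep)$ rate.

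The main obstacle is not producing the Steklov-eigenvalue bound itself, but rather justifying that the $L^2(\partial\Omega)$ convergence actually controls $\|S_\varep g-S_0 g\|_{L^2(\partial\Omega)}$ in the $C^{1,1}$ case, since Corollary \ref{Neumann-cor-1} is phrased in terms of the radial maximal function $\mathcal{M}(u_\varep-u_0)$; passing from $\mathcal{M}$ to the boundary trace uses the nontangential convergence of $u_\varep-u_0$ to its $H^{1/2}$ trace on $\partial\Omega$ (standard in Lipschitz domains) together with Fatou's lemma. In the general Lipschitz case this issue does not arise because Theorem \ref{Neumann-theorem-2} and Corollary \ref{cor-6.1} give the boundary $L^2$ bound directly.
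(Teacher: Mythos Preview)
Your proposal is correct and follows essentially the same approach as the paper: apply Theorem \ref{eigenvalue-theorem} to the operators $S_\varep$ on $\mathcal{H}=\{g\in L^2(\partial\Omega):\int_{\partial\Omega} g=0\}$, then feed in the $L^2(\partial\Omega)$ Neumann convergence rates from Corollaries \ref{Neumann-cor-1} and \ref{cor-6.1}. One small slip worth fixing: Theorem \ref{eigenvalue-theorem} gives a \emph{lower} bound on the eigenvalues $|\partial\Omega|/s_\varep^k$ of $S_\varep$, hence an \emph{upper} bound $s_\varep^k\le C_k$ (not the lower bound you state), and it is precisely this upper bound that lets you pass from $|(s_\varep^k)^{-1}-(s_0^k)^{-1}|$ to $|s_\varep^k-s_0^k|$ via $|s_\varep^k-s_0^k|=s_\varep^k s_0^k\,|(s_\varep^k)^{-1}-(s_0^k)^{-1}|$.
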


\begin{remark}\label{remark-7.1}
{\rm
The operator $S_\varep$ introduced above is in fact the inverse of the Dirichlet-to-Neumann map
associated with $\mathcal{L}_\varep$.
Note that by Corollaries \ref{Neumann-cor-1} and \ref{cor-6.1},
\begin{equation}
\|S_\varep-S_0\|_{L^2(\partial\Omega)\to L^2(\partial\Omega)} \le 
\left\{
\aligned
&C\, \varep  \qquad \qquad 
\qquad \text{ if } \Omega \text{ is } C^{1,1},\\
&C_\sigma\, \varep  |\ln(\varep)|^{\frac12 +\sigma}
 \quad \text{ if } \Omega \text{ is Lipschitz,}
\endaligned
\right.
\end{equation}
for any $\sigma>0$.
}
\end{remark}

\section{Weighted potential estimates}

Let $H_\varep (x) =(H^1_\varep (x), \dots, H^m_\varep(x))$ be defined by
\begin{equation}\label{definition-of-H}
H^\alpha_\varep (x) =\int_\Omega \frac{\partial}{\partial y_k}
\left\{ \Gamma_\varep^{\alpha\beta} (x,y)\right\} h^\beta (y)\, dy,
\end{equation}
where $h=(h^1, \dots, h^m)\in L^2(\Omega)$.
It follows from \cite{AL-1991} that $\|\nabla H_\varep\|_{L^2(\brd)}\le C\| h\|_{L^2(\Omega)}$.

\begin{prop}\label{prop-H-1}
The estimate
\begin{equation}\label{estimate-H-1}
\| H_\varep\|_{L^2(\partial\Omega)}
\le C_a \left\{ \int_\Omega |h(x)|^2 \delta (x) \phi_a (\delta(x))\, dx\right\}^{1/2}
\end{equation}
holds for any $a>1$.
\end{prop}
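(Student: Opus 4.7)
The plan is to estimate $\|H_\varep\|_{L^2(\partial\Omega)}$ by duality, swapping the roles of $x$ and $y$ so that the heavy derivative $\partial_{y_k}$ falls on a single-layer potential driven by the test function $g\in L^2(\partial\Omega)$. Concretely, for $g\in L^2(\partial\Omega)$ I introduce
$$
\mathcal{S}_\varep(g)^\beta(y)=\int_{\partial\Omega}\Gamma_\varep^{\beta\alpha}(y,x)\,g^\alpha(x)\,d\sigma(x),\qquad y\in\Omega.
$$
The symmetry hypothesis $A^*=A$ gives $\Gamma_\varep^{\alpha\beta}(x,y)=\Gamma_\varep^{\beta\alpha}(y,x)$, so Fubini yields
$$
\int_{\partial\Omega}g^\alpha(x)H_\varep^\alpha(x)\,dx=\int_\Omega h^\beta(y)\,\partial_{y_k}\mathcal{S}_\varep(g)^\beta(y)\,dy.
$$
By Cauchy--Schwarz with the weight $\delta(y)\phi_a(\delta(y))$ and then taking the supremum over unit-norm $g$, the proposition reduces to the weighted square estimate
$$
\int_\Omega \frac{|\nabla\mathcal{S}_\varep(g)(y)|^2}{\delta(y)\phi_a(\delta(y))}\,dy \le C_a\,\|g\|_{L^2(\partial\Omega)}^2\qquad\text{for }a>1.
$$

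To establish this, I would first observe that $\mathcal{S}_\varep(g)$ solves $\mathcal{L}_\varep u=0$ in $\Omega$, and that its boundary trace lies in $H^1(\partial\Omega)$ with $\|\mathcal{S}_\varep(g)\|_{H^1(\partial\Omega)}\le C\|g\|_{L^2(\partial\Omega)}$ (this is the standard tangential-derivative bound for single-layer potentials, proved for $\mathcal{L}_\varep$ in Lipschitz domains in the references). Applying the second half of Theorem \ref{Dirichlet-problem-theorem} to $u=\mathcal{S}_\varep(g)$ gives
$$
\|(\nabla\mathcal{S}_\varep(g))^*\|_{L^2(\partial\Omega)}\le C\,\|g\|_{L^2(\partial\Omega)}.
$$

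Next I would use the foliation $\{\Omega_t\}$ of a tubular neighborhood of $\partial\Omega$ introduced in Section 2: parametrizing $y=\Lambda_t(Q)$ for $Q\in\partial\Omega$ and $t\in(-c_0,0)$ yields $\delta(y)\sim|t|$, $dy\sim d\sigma(Q)\,d|t|$, and $|\nabla\mathcal{S}_\varep(g)(y)|\le (\nabla\mathcal{S}_\varep(g))^*(Q)$ for all such $y$. Fubini then gives
$$
\int_{\{\delta(y)<c_0\}}\frac{|\nabla\mathcal{S}_\varep(g)|^2}{\delta\,\phi_a(\delta)}\,dy\le\Bigl(\int_{\partial\Omega}[(\nabla\mathcal{S}_\varep(g))^*(Q)]^2\,d\sigma(Q)\Bigr)\cdot\int_0^{c_0}\frac{dt}{t\,\phi_a(t)},
$$
and the inner one-dimensional integral is finite exactly when $a>1$, since $\phi_a(t)\sim(\ln(1/t))^a$ near $0$ and the substitution $u=\ln(1/t)$ converts it to $\int^{\infty}u^{-a}\,du$. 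The interior contribution from $\{\delta(y)\ge c_0\}$ is controlled by $C\|g\|_{L^2(\partial\Omega)}^2$ using the size bound $|\Gamma_\varep(y,x)|\le C|y-x|^{2-d}$ together with the interior gradient estimate (\ref{interior-estimate}).

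The main obstacle I anticipate is the single-layer estimate $\|\mathcal{S}_\varep(g)\|_{H^1(\partial\Omega)}\le C\|g\|_{L^2(\partial\Omega)}$ in Lipschitz domains for $\mathcal{L}_\varep$; everything downstream is routine foliation and a log-integral convergence check, but this $H^1$--$L^2$ bound for the single layer is the nontrivial harmonic-analytic input, and one must invoke the precise version from \cite{Kenig-Shen-1,Kenig-Shen-2} rather than the classical Verchota result for the Laplacian. Once that is in hand, the argument is an essentially one-line duality plus a Fubini calculation that explains why the threshold $a>1$ appears naturally.
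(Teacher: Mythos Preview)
Your proposal is correct and follows essentially the same duality strategy as the paper: pair against $g\in L^2(\partial\Omega)$, swap the order of integration so that the $\partial_{y_k}$ derivative lands on a single-layer potential, apply Cauchy--Schwarz with the weight $\delta\phi_a(\delta)$, and close using the nontangential maximal estimate $\|(\nabla\mathcal{S}_\varep g)^*\|_{L^2(\partial\Omega)}\le C\|g\|_{L^2(\partial\Omega)}$ together with the integrability of $(t\phi_a(t))^{-1}$ near $t=0$ for $a>1$. The only cosmetic difference is that the paper invokes this maximal estimate directly from \cite[Theorem~4.3]{Kenig-Shen-2}, whereas you route through the $H^1(\partial\Omega)$ bound for $\mathcal{S}_\varep g$ and then apply Theorem~\ref{Dirichlet-problem-theorem}; both are valid and lead to the same conclusion.
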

\begin{proof}
Recall that $\delta(x)=\text{dist}(x, \partial\Omega)$ and
$\phi_a (t)=\big\{ \ln (\frac{1}{t} +e^a)\big\}^a$.
Let $g=(g^1, \dots, g^m)\in L^2(\partial\Omega)$ and $u_\varep =(u_\varep^1,\dots, u_\varep^m)$,
where
$$
u_\varep^\beta (y)
=\int_{\partial\Omega} \frac{\partial}{\partial y_k}
\left\{ \Gamma_\varep^{\alpha\beta} (x,y)\right\} g^\alpha (x)\, d\sigma(x).
$$
It follows from \cite[Theorem 4.3]{Kenig-Shen-2} that
$\|(u_\varep)^*\|_{L^2(\partial\Omega)} \le C \| g\|_{L^2(\partial\Omega)}$.
Observe that
\begin{equation}\label{8.1.1}
\aligned
& \left|\int_{\partial\Omega} H_\varep^\alpha (x) g^\alpha (x)\, d\sigma(x)\right|
=\left|\int_\Omega h^\beta (y) u_\varep^\beta (y)\, dy\right|\\
&\le  \left\{\int_\Omega |h(y)|^2 \delta(y)\phi_a (\delta(y))\, dy\right\}^{1/2}
\left\{ \int_\Omega |u_\varep (y)|^2 \left\{
\delta(y) \phi_a (\delta(y))\right\}^{-1}\, dy \right\}^{1/2}
\endaligned
\end{equation}
and that if $a>1$,
\begin{equation}\label{8.1.2}
 \int_\Omega |u_\varep (y)|^2 \left\{
\delta(y) \phi_a (\delta(y))\right\}^{-1}\, dy 
\le C\int_{\partial\Omega}
|(u_\varep)^*|^2\, d\sigma (y)
\le C\| g\|^2_{L^2(\partial\Omega)}.
\end{equation}
Estimate (\ref{estimate-H-1}) follows from (\ref{8.1.1})-(\ref{8.1.2})
by duality.
\end{proof}

\begin{prop}\label{prop-H-2}
The estimate
\begin{equation}\label{estimate-H-2}
\| H_\varep\|_{L^2(\Omega)}
\le C_a \left\{ \int_\Omega |h(x)|^2 \delta (x) \phi_a (\delta(x))\, dx\right\}^{1/2}
\end{equation}
holds for any $a>1$.
\end{prop}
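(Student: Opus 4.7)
The plan is to mimic the duality argument of Proposition~\ref{prop-H-1}. By $L^2(\Omega)$ duality,
$$
\|H_\varep\|_{L^2(\Omega)}=\sup_{\|g\|_{L^2(\Omega)}\le 1}\left|\int_\Omega H_\varep^\alpha g^\alpha\,dx\right|,
$$
and Fubini together with the identity $\Gamma_\varep^{\alpha\beta}(x,y)=\Gamma_\varep^{\beta\alpha}(y,x)$ (a consequence of $A^*=A$) rewrites the pairing as
$$
\int_\Omega H_\varep^\alpha g^\alpha\,dx=\int_\Omega h^\beta(y)\,u_\varep^\beta(y)\,dy,\qquad u_\varep^\beta(y)=\partial_{y_k}v_\varep^\beta(y),
$$
where $v_\varep^\beta(y)=\int_\Omega\Gamma_\varep^{\beta\alpha}(y,x)g^\alpha(x)\,dx$ solves $\mathcal{L}_\varep v_\varep=\widetilde g$ on $\brd$ with decay at infinity ($\widetilde g$ is the zero extension of $g$). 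Weighted Cauchy--Schwarz with weight $\delta(y)\phi_a(\delta(y))$ reduces (\ref{estimate-H-2}) to proving
$$
\int_\Omega|u_\varep(y)|^2\,\{\delta(y)\phi_a(\delta(y))\}^{-1}\,dy\le C\|g\|_{L^2(\Omega)}^2\qquad(a>1).
$$

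I would split this integral at $\delta=c$ for a small constant $c>0$. On $\{\delta\ge c\}$ the weight is bounded, and the required bound $\|u_\varep\|_{L^2(\Omega)}\le C\|g\|_{L^2(\Omega)}$ follows from the $L^2(\brd)$ bound on $\nabla H_\varep$ recalled at the start of the section (the integral defining $u_\varep$ is of the same type as that defining $H_\varep$ after relabeling the variables). On $\{\delta<c\}$, I would foliate the tubular neighborhood by the family $\{\partial\Omega_{-t}\}_{0<t<c}$ of uniformly Lipschitz parallel surfaces introduced in Section~2. Using the bi-Lipschitz parametrization $\Lambda_{-t}$ and $\delta(\Lambda_{-t}(Q))\sim t$,
$$
\int_{\{\delta<c\}}|u_\varep|^2\{\delta\phi_a(\delta)\}^{-1}\,dy\le C\int_0^c\frac{dt}{t\,\phi_a(t)}\int_{\partial\Omega_{-t}}|\nabla v_\varep|^2\,d\sigma_t,
$$
and the substitution $s=\ln(1/t+e^a)$ shows that the one-variable weight $\int_0^c dt/(t\,\phi_a(t))$ is finite precisely when $a>1$.

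It then remains to establish the uniform trace bound
$$
\|\nabla v_\varep\|_{L^2(\partial\Omega_{-t})}\le C\|g\|_{L^2(\Omega)},\qquad 0<t<c,
$$
with constant independent of $t$. Because $\widetilde g\in L^2(\brd)$ is compactly supported and $A\in C^\lambda(\brd)$, the interior $H^2$ (Schauder-type) estimate for $\mathcal{L}_\varep$ applied on balls of fixed radius around each boundary point yields $v_\varep\in H^2(U)$ on a tubular neighborhood $U\supset\partial\Omega$, with $\|v_\varep\|_{H^2(U)}\le C\|g\|_{L^2(\Omega)}$; the $L^2(U)$ ingredient is handled via the fundamental-solution bound $|\Gamma_\varep(x,y)|\le C|x-y|^{2-d}$ and standard fractional-integral estimates. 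The uniform Lipschitz geometry of $\{\partial\Omega_{-t}\}$ then makes the trace inequality $\|\nabla v_\varep\|_{L^2(\partial\Omega_{-t})}\le C\|v_\varep\|_{H^2(U)}$ hold with $t$-independent constant.

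The main obstacle is precisely this uniform trace bound. What makes it tractable is that $\widetilde g$ is $L^2$ on a full open neighborhood of $\partial\Omega$ (rather than merely up to $\partial\Omega$ from inside), so the interior $H^2$ regularity of $v_\varep$ is genuine across $\partial\Omega$---a feature that distinguishes this volume-potential setup from the boundary-layer analysis carried out in Proposition~\ref{prop-H-1}.
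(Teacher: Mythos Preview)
Your duality setup and the reduction to the weighted dual inequality
\[
\int_\Omega |\nabla v_\varep(y)|^2\,\{\delta(y)\phi_a(\delta(y))\}^{-1}\,dy
\le C\,\|g\|_{L^2(\Omega)}^2
\]
are correct, as is the foliation step. The gap is in the last paragraph: the interior $H^2$ estimate you invoke is \emph{not} uniform in $\varep$. The coefficients of $\mathcal{L}_\varep$ are $A(x/\varep)$; their H\"older seminorm on a fixed ball scales like $\varep^{-\lambda}$ and their Lipschitz norm like $\varep^{-1}$. Any $H^2$ bound for $\mathcal{L}_\varep v_\varep=\widetilde g$ therefore carries a constant that blows up as $\varep\to 0$. (In fact the two-scale expansion $v_\varep\approx v_0+\varep\chi(\cdot/\varep)\nabla v_0$ shows $\nabla^2 v_\varep$ contains a term of size $\varep^{-1}$, so $\|v_\varep\|_{H^2(U)}\le C\|g\|_{L^2}$ with $C$ independent of $\varep$ is simply false.) Since the constant in (\ref{estimate-H-2}) must be independent of $\varep$---this is what makes the proposition useful in Lemma~\ref{lemma-4.3} and ultimately in the $O(\varep|\ln\varep|^a)$ rates---your trace argument as written does not close.

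The paper sidesteps this by dualizing against $g$ supported on a \emph{single} parallel surface $\partial\Omega_t$ rather than on the full volume. The dual object is then the boundary-layer potential $u_{\varep,t}(y)=\int_{\partial\Omega_t}\partial_{y_k}\Gamma_\varep(x,y)\,g(x)\,d\sigma(x)$, for which the $\varep$-uniform nontangential maximal bound $\|(u_{\varep,t})^*\|_{L^2(\partial\Omega_t)}\le C\|g\|_{L^2(\partial\Omega_t)}$ from \cite{Kenig-Shen-2} is available; this replaces the $H^2$ step and gives $\|H_\varep\|_{L^2(\partial\Omega_t)}$ bounded uniformly in $t$ by the right side of (\ref{estimate-H-2}), whence the $L^2(\Omega\setminus K)$ bound by integration in $t$. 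Your volume-duality route can in fact be salvaged---the trace bound $\|\nabla v_\varep\|_{L^2(\partial\Omega_{-t})}\le C\|g\|_{L^2(\Omega)}$ \emph{is} true uniformly in $\varep$---but proving it requires slicing $g$ by the coarea formula and applying precisely those same uniform layer-potential estimates, not $H^2$ regularity.
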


\begin{proof}
Let $K\subset K_1$ be two compact subsets of $\Omega$ such that
dist$(K, \Omega\setminus K_1)\ge c_0>0$.
Since $|\nabla_y\Gamma_\varep (x,y)|\le C|x-y|^{1-d}$, we have
$$
|H_\varep (x)|\le C\int_{K_1} \frac{|h(y)|\, dy}{|x-y|^{d-1}}
+C\int_{\Omega\setminus K_1} |h(y)|\, dy \quad \text{ for any } x\in K.
$$
This implies that $\|H_\varep\|_{L^2(K)}$
is bounded by the right hand side of (\ref{estimate-H-2}) if $a>1$.

To estimate $\|H_\varep\|_{L^2(\Omega\setminus K)}$,
it suffices to show that $\|H_\varep\|_{L^2(\partial\Omega_t)}$ is bounded 
uniformly in $t$
by the right hand side of (\ref{estimate-H-2}) for $-c<t<0$, where $\Omega_t$
is defined in Section 2.
This may be done by a duality argument, as in the proof of Proposition \ref{prop-H-1}.
Indeed, the argument reduces the problem to the following estimate
\begin{equation}\label{8.2.1}
\int_\Omega
|u_{\varep,t} (y)|^2 \big\{ \delta(y)\phi_a (\delta(y))\big\}^{-1}\, dy
\le C\int_{\partial\Omega_t} |g|^2\, d\sigma,
\end{equation}
where
$$
u^\beta_{\varep, t} (y)
=\int_{\partial\Omega_t} \frac{\partial}{\partial y_k}
\left\{ \Gamma_\varep^{\alpha\beta} (x,y)\right\} g^\alpha (x)\, d\sigma(x).
$$
Finally, the estimate (\ref{8.2.1}) follows from the observation that
$\|u_{\varep, t} \|_{L^2(K)} \le C_K \| g\|_{L^2(\partial\Omega_t)}$ 
for compact $K\subset\Omega_t$,
and that $\|u_{\varep, t}\|_{L^2(\partial\Omega_s)}\le C 
\|(u_{\varep, t})^*\|_{L^2(\partial\Omega_t)}\le C
\| g\|_{L^2(\partial\Omega_t)}$
for $-c<t,s<0$.
This completes the proof.
\end{proof}

\begin{prop}\label{prop-H-3}
The estimate
\begin{equation}\label{estimate-H-3}
\left\{\int_\Omega |\nabla H_\varep (x)|^2\, \delta(x)\phi_a (\delta(x))\, dx\right\}^{1/2}
\le C_a \left\{ \int_\Omega |h(x)|^2 \delta (x) \phi_{a+2} (\delta(x))\, dx\right\}^{1/2}
\end{equation}
holds for any $a\ge 0$.
\end{prop}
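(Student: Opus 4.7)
My plan is to prove the weighted gradient estimate by a dyadic decomposition of $\Omega$ with respect to $\delta(x)=\text{dist}(x,\partial\Omega)$, combined with the Calder\'on--Zygmund structure of the kernel $\partial_{x_i}\partial_{y_k}\Gamma_\varep(x,y)$ recalled in Section~2.

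Set $E_k=\{x\in\Omega:\,2^{-k-1}<\delta(x)\le 2^{-k}\}$ for $k\ge 0$, so that $\delta(x)\phi_a(\delta(x))\sim 2^{-k}(k+1)^a$ on $E_k$ for large $k$. For each $x\in E_k$, I would split
\begin{equation*}
\nabla H_\varep(x)=A_k(x)+B_k(x),
\end{equation*}
where $A_k(x)$ is the integral over the near region $\{y:|y-x|<c\,2^{-k}\}$ and $B_k(x)$ the integral over its complement. Since $B(x,c\,2^{-k})$ is contained in $\Omega$ for $x\in E_k$, the near part is a truncated Calder\'on--Zygmund operator applied to $h$ localized in a slight thickening $\tilde E_k$ of $E_k$; its $L^2(\brd)$ boundedness (standard from the kernel bound $|\partial_x\partial_y\Gamma_\varep(x,y)|\le C|x-y|^{-d}$) yields $\int_{E_k}|A_k|^2\,dx\le C\int_{\tilde E_k}|h|^2\,dy$. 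Multiplying by $2^{-k}(k+1)^a$ and summing in $k$, this contributes at most $C\int_\Omega|h|^2\delta\phi_a\,dx\le C\int_\Omega|h|^2\delta\phi_{a+2}\,dx$, which is the right-hand side of \eqref{estimate-H-3}.

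For $B_k(x)$, I would use the pointwise kernel bound $|\partial_x\partial_y\Gamma_\varep(x,y)|\le C|x-y|^{-d}$ together with a dyadic annular decomposition of $\{y:|y-x|\ge c\,2^{-k}\}$ at scales $\sim 2^{-l}$ for $l\le k$, and then apply Cauchy--Schwarz with the weight $w(y)=\delta(y)\phi_{a+2}(\delta(y))$ on the $h$-side and its reciprocal on the other. The problem then reduces to the auxiliary estimate $\int_{|y-x|\ge c\,2^{-k}}(w(y)|x-y|^d)^{-1}\,dy\le C\,2^k/(k+1)^{a+1}$ for $x\in E_k$, which follows by a direct computation using the boundary-layer volume bound $|\{\delta\le s\}\cap(\text{shell at scale }2^{-l})|\lesssim s\cdot 2^{-l(d-1)}$ together with $\int_0^{2^{-l}}\frac{ds}{s\phi_{a+2}(s)}\sim(l+1)^{-(a+1)}$. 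Summing the resulting bilinear expression against $2^{-k}(k+1)^a$ gives a convergent double series whose total is $C\int_\Omega|h|^2\delta\phi_{a+2}\,dx$.

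The main obstacle is this quantitative tracking of logarithmic factors: the weight $\delta(x)$ is critical in the Muckenhoupt $A_2$ class on Lipschitz domains, so a naive application of weighted Calder\'on--Zygmund theory fails by a logarithm, and the gap $\phi_{a+2}/\phi_a\sim(\log(1/\delta))^2$ between the two sides is exactly the slack needed to make the $l$-summation converge. An alternative route, in the spirit of the duality arguments used in Propositions~\ref{prop-H-1} and \ref{prop-H-2} and exploiting the symmetry $A=A^*$ to recognize the adjoint kernel as one of the same form, might allow a reduction of the gradient estimate on $H_\varep$ to a zeroth-order estimate on an auxiliary potential already controlled by Propositions~\ref{prop-H-1}--\ref{prop-H-2} and by the boundary bounds of Theorem~\ref{Dirichlet-problem-theorem}.
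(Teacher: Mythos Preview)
Your near-part argument is fine (once one notes that $A_k = T(h\chi_{\tilde E_k}) - T_{c2^{-k}}(h\chi_{\tilde E_k})$ and uses the uniform $L^2$ bound on truncated CZ operators). The gap is in the far part. Your auxiliary estimate $\int_{|y-x|\ge c2^{-k}}\bigl(w(y)|x-y|^d\bigr)^{-1}\,dy\le C\,2^k/(k+1)^{a+1}$ is correct, but the claim that the resulting double series converges to $C\int|h|^2\delta\phi_{a+2}$ is not. After your Cauchy--Schwarz one has, for $x\in E_k$,
\[
|B_k(x)|^2\ \le\ \frac{C\,2^k}{(k+1)^{a+1}}\int_{|y-x|\ge c2^{-k}}\frac{|h(y)|^2\,w(y)}{|x-y|^d}\,dy,
\]
so multiplying by $2^{-k}(k+1)^a$, integrating over $E_k$, swapping, and using the elementary bound $\int_{E_k\cap\{|x-y|\ge c2^{-k}\}}|x-y|^{-d}\,dx\lesssim \min(1,2^{j-k})$ for $y\in E_j$, the $k$-sum becomes
\[
\sum_{k\ge 0}\frac{\min(1,2^{j-k})}{k+1}\ =\ \sum_{k\le j}\frac{1}{k+1}+\sum_{k>j}\frac{2^{j-k}}{k+1}\ \sim\ \log(j+1)\ \sim\ \log\log(1/\delta(y)),
\]
which diverges as $\delta(y)\to 0$. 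Thus your argument yields only $\int_\Omega|h|^2\,\delta\,\phi_{a+2}(\delta)\,\log\log(1/\delta)$ on the right-hand side, i.e.\ \eqref{estimate-H-3} with $\phi_{a+2}$ replaced by $\phi_{a+2+\epsilon}$ for any $\epsilon>0$, but not the endpoint. The loss is structural: by replacing $\partial_x\partial_y\Gamma_\varep$ with its absolute-value bound $|x-y|^{-d}$ on the far part you discard the cancellation that is needed to reach the sharp gap of two logarithmic powers.

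The paper's proof proceeds quite differently and avoids this loss. After localizing to a Lipschitz graph and rescaling to $\varep=1$, it invokes the asymptotic expansion of $\nabla_x\nabla_y\Gamma_1(x,y)$ from \cite{AL-1991} to bound $|T(f)|$ pointwise by a sum of maximal singular integrals $T_i^*(g_i)$ (with $|g_i|\le C|f|$) plus the Hardy--Littlewood maximal function $M(f)$. Since $\omega_1=\widetilde\delta\,\phi_a(\varep\widetilde\delta)$ is an $A_\infty$ weight uniformly in $\varep$, the Coifman--Fefferman inequality reduces matters to the two-weight maximal inequality $\int|M(f)|^2\omega_1\le C\int|f|^2\omega_2$, which is then verified (after flattening and reducing to dimension one via $M\le M_1\circ\cdots\circ M_d$) by checking Sawyer's testing condition for the pair $(|x|\phi_a(|x|),\,|x|\phi_{a+2}(|x|))$ on $\mathbb R$. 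The point is that Coifman--Fefferman exploits the CZ cancellation at \emph{all} scales simultaneously, whereas your decomposition uses it only at the single scale $2^{-k}$ and is forced to treat the remaining scales by size alone.
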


\begin{proof}
Using $|\nabla_x\nabla_y \Gamma_\varep (x,y)|\le C |x-y|^{-d}$ and a partition of unity,
we may reduce the estimate (\ref{estimate-H-3})
 to the case where $\Omega=\{ (x^\prime, x_d):\, x^\prime\in \mathbb{R}^{d-1}
\text{ and } x_d>\psi(x^\prime)\}$
is the region above a Lipschitz graph and $\delta(x)$ is replaced by
$\widetilde{\delta}(x)=|x_d-\psi(x^\prime)|$.
Since $\Gamma_\varep (x,y)=\varep^{2-d}\Gamma_1 (x/\varep, y/\varep)$, by a rescaling argument,
we may further reduce the problem to the following weighted $L^2$ inequality for
a singular integral operator,
\begin{equation}\label{weighted-inequality-1}
\int_{\mathbb{R}^d} |T(f)|^2 \, \omega _1 \, dx
\le C \int_{\mathbb{R}^d} |f|^2 \, \omega_2 \, dx,
\end{equation}
where $\omega_1(x)=\widetilde{\delta}(x)\phi_a (\varep\widetilde{\delta}(x))$,
 $\omega_2 (x) =\widetilde{\delta}(x) \phi_{a+2} (\varep \widetilde{\delta}(x))$ and
\begin{equation}\label{singular-integral-1}
T(f) (x)
=\int_{\mathbb{R}^d} \nabla_x\nabla_y \Gamma_1 (x,y) f(y)\, dy. 
\end{equation}
We point out that the constant $C$ in (\ref{weighted-inequality-1}) should only depend on
$d$, $m$, $\mu$, $\lambda$, $\tau$, $a$ and $\|\nabla\psi\|_\infty$.

To establish (\ref{weighted-inequality-1}), we first use the asymptotic estimates
on $\nabla_x\nabla_y \Gamma_1 (x,y)$ for $|x-y|\le 1$ and
$|x-y|\ge 1$ in \cite{AL-1991} to obtain
\begin{equation}\label{8.3.1}
|T (f) (x)|
\le C \big\{ |T_1^* (g_1)(x)| +|T_2^* (g_2) (x)| +  M(f)(x)\big\},
\end{equation}
where $T_1^*$, $T_2^*$ are $L^2$ bounded maximal singular integral operators with standard
Calder\'on-Zygmund kernels, $M(f)$ is the Hardy-Littlewood maximal function of $f$ 
in $\mathbb{R}^d$, and $|g_1|$, $|g_2|$ are bounded pointwise by $C |f|$. 
Next we observe that $\omega_1$ is an $A_\infty$ weight in $\mathbb{R}^d$.
This allows us to use a classical result of R. Coifman and C. Fefferman \cite{Coifman-Fefferman}
and (\ref{8.3.1}) to deduce that
\begin{equation}\label{8.3.2}
\int_{\mathbb{R}^d} |T(f)|^2\, \omega_1 \, dx
\le C
\int_{\mathbb{R}^d} |M(f)|^2\, \omega_1 \, dx.
\end{equation}
As a result,  it remains only to show that
\begin{equation}\label{8.3.3}
\int_{\mathbb{R}^d} |M(f)|^2\, \omega_1 \, dx
\le C
\int_{\mathbb{R}^d} |f|^2\, \omega_2 \, dx.
\end{equation}
This is a two-weight norm inequality for the Hardy-Littlewood maximal
operator, which has been studied extensively.
In particular, E. Sawyer \cite{Sawyer-1982}
was able to characterize all pairs of $(\omega_1, \omega_2)$
for which (\ref{8.3.3}) holds. 

Finally, to prove (\ref{8.3.3}), by a bi-Lipschitz transformation,
we may assume that $\psi=0$. Consequently, it suffices to consider the case
$d=1$. 
This is because $M(f)\le M_1\circ M_2 \circ\cdots \circ M_d (f)$,
where $M_i$ denotes the Hardy-Littlewood maximal function 
in the $x_i$ variable.
Furthermore, by rescaling, we may assume $\varep=1$.
With $\omega_1 (x)=|x|\phi_a(|x|)$ and $\omega_2 (x)=|x|\phi_{2+a} (|x|)$ in $\br$,
it is not very hard to verify that $(\omega_1, \omega_2)$
satisfies the necessary and sufficient condition in \cite{Sawyer-1982} for any
$a\ge 0$.
We omit the details.
\end{proof}

\begin{prop}\label{prop-H-4}
The estimate
\begin{equation}\label{estimate-H-4}
\| \mathcal{M}(H_\varep)\|_{L^2(\partial\Omega)}
\le C_a \left\{ \int_\Omega |h(x)|^2 \delta (x) \phi_a (\delta(x))\, dx\right\}^{1/2}
\end{equation}
holds for any $a>3$.
\end{prop}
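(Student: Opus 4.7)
The plan is to dominate $\mathcal{M}(H_\varep)(Q)$ pointwise by a fundamental-theorem-of-calculus argument along the radial path $s\mapsto\Lambda_s(Q)$. For $-c<t<0$, writing
$$H_\varep(\Lambda_t(Q))=H_\varep(\Lambda_{-c/2}(Q))+\int_{-c/2}^{t}\nabla H_\varep(\Lambda_s(Q))\cdot\Lambda_s'(Q)\,ds,$$
(and the analogous identity for $t<-c/2$, taking $\Lambda_{-c/2}$ as the fixed base point), then using $|\Lambda_s'(Q)|\le C$, I take absolute values and the supremum in $t$ to obtain the pointwise bound
$$\mathcal{M}(H_\varep)(Q)\le |H_\varep(\Lambda_{-c/2}(Q))|+C\int_{-c}^{0}|\nabla H_\varep(\Lambda_s(Q))|\,ds.$$
It then suffices to estimate the $L^2(\partial\Omega)$ norm of each piece.

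For the first piece, the bi-Lipschitz change $Q\mapsto\Lambda_{-c/2}(Q)$ reduces the bound to $\|H_\varep\|_{L^2(\partial\Omega_{-c/2})}$. I would reprise the duality proof of Proposition~\ref{prop-H-2}: write $\int_{\partial\Omega_{-c/2}}H_\varep\cdot g\,d\sigma=\int_\Omega h\cdot v\,dy$ with $v(y)=\int_{\partial\Omega_{-c/2}}\nabla_y\Gamma_\varep(x,y)g(x)\,d\sigma(x)$, and apply Cauchy--Schwarz against the weight $\delta\phi_a$. The required estimate $\int_\Omega |v|^2/(\delta\phi_a(\delta))\,dy\le C\|g\|_{L^2(\partial\Omega_{-c/2})}^2$ (for $a>1$) is obtained by splitting $\Omega=\Omega_{-c/2}\cup(\Omega\setminus\overline{\Omega_{-c/2}})$: on the interior piece the weight is bounded and $\|v\|_{L^2(\Omega_{-c/2})}$ is controlled by layer potential theory, while on the collar $\mathcal{L}_\varep^* v=0$ with boundary data in $L^2$ (on $\partial\Omega_{-c/2}$ by the Kenig--Shen $L^2$ Dirichlet theory, and on $\partial\Omega$ pointwise by $C\|g\|_2$ because the kernel $|\nabla_y\Gamma_\varep|$ is non-singular at distance $\ge c/2$), so Theorem~\ref{Dirichlet-problem-theorem} plus the identity $\int_D|v|^2/(\delta\phi_a(\delta))\,dx\le C\int_{\partial D}|(v)^*|^2\,d\sigma$ (a routine radial-path computation valid for $a>1$) closes the argument.

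For the second piece, I apply weighted Cauchy--Schwarz in $s$ with weight $|s|\phi_b(|s|)$:
$$\left(\int_{-c}^0|\nabla H_\varep(\Lambda_s(Q))|\,ds\right)^2\le\left(\int_0^c\frac{ds}{s\phi_b(s)}\right)\int_{-c}^0|\nabla H_\varep(\Lambda_s(Q))|^2\,|s|\phi_b(|s|)\,ds.$$
The one-dimensional integral converges iff $b>1$ (substitute $u=\ln(1/s)$). Integrating in $Q\in\partial\Omega$ and changing variables $(Q,s)\mapsto\Lambda_s(Q)=x$, whose Jacobian is bounded above and below and where $\delta(x)\sim|s|$, yields
$$\left\|\int_{-c}^0|\nabla H_\varep(\Lambda_s(\cdot))|\,ds\right\|_{L^2(\partial\Omega)}^2\le C_b\int_\Omega|\nabla H_\varep|^2\,\delta(x)\phi_b(\delta(x))\,dx,$$
and Proposition~\ref{prop-H-3} bounds the right-hand side by $C\int_\Omega|h|^2\delta\phi_{b+2}(\delta)\,dx$. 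Choosing $b>1$ and setting $a=b+2>3$ combines the two contributions and delivers (\ref{estimate-H-4}).

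The main obstacle is the first step: transferring the duality argument of Proposition~\ref{prop-H-2} to the interior parallel surface $\partial\Omega_{-c/2}$, where the dual layer potential lives on a surface that is not $\partial\Omega$ and the bound must be propagated through the thin collar by the Kenig--Shen Dirichlet theory. Once this is in place, the remaining work is straightforward bookkeeping of the logarithmic weights $\phi_b$, together with an application of Proposition~\ref{prop-H-3} at the weight index $b=a-2$.
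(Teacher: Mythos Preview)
Your proof is correct and follows essentially the same route as the paper: fundamental theorem of calculus along the radial path, weighted Cauchy--Schwarz in $s$ against $|s|\phi_b(|s|)$ with $b>1$, and then Proposition~\ref{prop-H-3} to pass from the $\delta\phi_b$-weighted $L^2$ norm of $\nabla H_\varep$ to the $\delta\phi_{b+2}$-weighted norm of $h$. The only cosmetic difference is in the zero-order term: the paper averages $|H_\varep(\Lambda_t(Q))|$ over $t\in(-c,0)$ rather than evaluating at a fixed base point, so after Cauchy--Schwarz it lands on $\int_\Omega |H_\varep|^2\,\delta\phi_a(\delta)\,dx$, which (since $\delta\phi_a(\delta)$ is bounded on $\Omega$) is controlled directly by Proposition~\ref{prop-H-2}---this avoids the separate collar/duality argument for $\|H_\varep\|_{L^2(\partial\Omega_{-c/2})}$ that you sketch, though your version is also fine and indeed that estimate is already contained in the proof of Proposition~\ref{prop-H-2}.
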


\begin{proof}
By the fundamental theorem of calculus and definition of the radial maximal operator,
it is easy to see that for any $Q\in \partial\Omega$,
\begin{equation}\label{8.4.1}
\aligned
\mathcal{M} (u)(Q)
&\le C\int_{-c}^0 \big\{ |\nabla u(\Lambda_t (Q))| +|u(\Lambda_t (Q)|\big\}\, dt
\\
&\le C_a \left\{
\int_{-c}^0 \big\{ |\nabla u(\Lambda_t (Q))|^2 +|u(\Lambda_t (Q)|^2\big\}
|t|\phi_a (|t|)\, dt\right\}^{1/2},
\endaligned
\end{equation}
for any $a>1$.
This yields that
\begin{equation}\label{8.4.2}
\int_{\partial\Omega}
|\mathcal{M}(u)|^2\, d\sigma
\le C_a \int_\Omega
\big\{ |\nabla u(x)|^2 + |u(x)|^2\big\} \delta(x) \phi_a (\delta(x))\, dx.
\end{equation}
Letting $u(x)=H_\varep (x)$ in (\ref{8.4.2}),
we obtain estimate (\ref{estimate-H-4}) by Propositions \ref{prop-H-2}-\ref{prop-H-3}.
\end{proof}

\begin{prop}\label{square-prop}
Let $f=(f^1, \dots, f^m)\in L^2(\partial\Omega)$ and $u_\varep =(u_\varep^1, \dots, u_\varep^m)$ be
given by
$$
u_\varep^\alpha (x)
=\int_{\partial\Omega}
\frac{\partial}{\partial y_k}
\left\{ \Gamma_\varep^{\alpha\beta} (x,y)\right\} f^\beta (y)\, d\sigma (y).
$$
Then
\begin{equation}\label{square-prop-estimate}
\left\{ \int_\Omega |\nabla u_\varep (x)|^2\, \delta (x)\, dx\right\}^{1/2}
\le C \| f\|_{L^2(\partial\Omega)}.
\end{equation}
\end{prop}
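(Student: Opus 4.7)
The plan is a duality argument reducing the square function estimate to an $L^2(\partial\Omega)$-bound for the gradient of a volume potential of $H_\varep$-type, to be handled with the tools of this section. Since $L^2(\Omega,\delta\,dx)$ is dual to $L^2(\Omega,\delta^{-1}dx)$ under the Lebesgue pairing, the target inequality $\int_\Omega|\nabla u_\varep|^2\delta\,dx\le C\|f\|_{L^2(\partial\Omega)}^2$ is equivalent to
\[
\left|\int_\Omega G(x)\cdot\nabla u_\varep(x)\,dx\right|\le C\|f\|_{L^2(\partial\Omega)}\left\{\int_\Omega|G|^2\delta^{-1}\,dx\right\}^{1/2}
\]
for every smooth vector field $G$ with compact support in $\Omega$.

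Substituting the definition of $u_\varep$, applying Fubini, and using the symmetry $A=A^*$ (so that $\Gamma_\varep^{\alpha\beta}(x,y)=\Gamma_\varep^{\beta\alpha}(y,x)$), the left side becomes $\int_{\partial\Omega}f^\beta(y)\,\partial_{y_k}V^\beta(y)\,d\sigma(y)$, where
\[
V^\beta(y)=\int_\Omega G_\ell^\alpha(x)\,\partial_{x_\ell}\{\Gamma_\varep^{\beta\alpha}(y,x)\}\,dx
\]
is an $H_\varep$-type volume potential with data $G$. By the Cauchy-Schwarz inequality on $\partial\Omega$, it thus suffices to show
$\|\partial_{y_k}V\|_{L^2(\partial\Omega)}\le C\{\int_\Omega|G|^2\delta^{-1}dx\}^{1/2}$.

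The kernel of $G\mapsto \partial_{y_k}V|_{\partial\Omega}$ is $\nabla_x\nabla_y\Gamma_\varep(x,y)$, which by the asymptotic estimates of \cite{AL-1991} is of standard Calder\'on-Zygmund size $|x-y|^{-d}$. Following the scheme used in Proposition \ref{prop-H-3}, I would localize to the Lipschitz graph case via a partition of unity, rescale to $\varep=1$, and split the kernel into near-field and far-field parts. The resulting two-weight estimate on the pair $(d\sigma_{\partial\Omega},\delta^{-1}dx)$ is then obtained by combining the Coifman-Fefferman weighted inequality \cite{Coifman-Fefferman} with the two-weight characterisation of the Hardy-Littlewood maximal operator due to Sawyer \cite{Sawyer-1982}, exactly as in Proposition \ref{prop-H-3}. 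The main obstacle is the two-weight mismatch between the interior weight $\delta^{-1}dx$, which blows up on $\partial\Omega$, and surface measure on $\partial\Omega$: the naive energy identity $0=\int_\Omega \mathcal{L}_\varep(u_\varep)\cdot u_\varep \delta\,dx$ gives only $\int|\nabla u_\varep|^2\delta\,dx\le C\int|u_\varep|^2\delta^{-1}dx$, whose right side diverges logarithmically for $f\in L^2(\partial\Omega)$, so one genuinely needs the Calder\'on-Zygmund cancellation of the singular integral defining $u_\varep$, and not just pointwise estimates, to close the bound.
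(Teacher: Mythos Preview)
Your duality reduction is correct but does not advance the proof: the adjoint estimate
\[
\|\nabla V\|_{L^2(\partial\Omega)}\le C\left\{\int_\Omega |G|^2\,\delta^{-1}\,dx\right\}^{1/2}
\]
is \emph{equivalent} to the square function bound you are trying to prove, by abstract duality between $L^2(\Omega,\delta\,dx)$ and $L^2(\Omega,\delta^{-1}dx)$. So nothing has been gained unless the adjoint form is genuinely more tractable. You then propose to prove it with the Coifman--Fefferman/Sawyer machinery ``exactly as in Proposition~\ref{prop-H-3}''. That step fails: the Coifman--Fefferman inequality \cite{Coifman-Fefferman} used in (\ref{8.3.2}) requires the \emph{target} weight to be $A_\infty$ on $\mathbb{R}^d$, and surface measure $d\sigma$ on $\partial\Omega$ is singular with respect to Lebesgue measure, hence not an $A_\infty$ weight. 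Sawyer's two-weight criterion \cite{Sawyer-1982} likewise concerns absolutely continuous weights on $\mathbb{R}^d$. You recognise the obstacle in your last paragraph, but then simply assert that ``one genuinely needs the Calder\'on--Zygmund cancellation'' without saying how to implement it; that is the whole content of the proposition.

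The paper's proof avoids this circularity by working directly. After localising and rescaling to $\varepsilon=1$, it splits $\Omega$ into a far region $D=\Omega+(0,\dots,1)$ and a boundary strip. On $D$ the asymptotic expansion of $\nabla_x\nabla_y\Gamma_1(x,y)$ for $|x-y|\ge 1$ from \cite{AL-1991} shows $\nabla u_1$ equals a constant-coefficient double-layer gradient (for which the square function estimate is already known \cite{Dahlberg-Kenig-Pipher-Verchota}) plus a remainder with integrable kernel. On the strip $\Omega\setminus D$ one uses a \emph{local} square function estimate (\ref{8.5.7}) on $T(r)$, which comes from the layer-potential representation of \cite{Kenig-Shen-2} via a $T(b)$ argument, together with the nontangential maximal bound $\|(u_1)^*\|_{L^2(\partial\Omega)}\le C\|f\|_{L^2(\partial\Omega)}$. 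The key inputs you are missing are precisely these two external square-function results; the weighted-potential propositions of this section do not by themselves yield (\ref{square-prop-estimate}).
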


\begin{proof}
By a partition of unity we may assume that $\Omega=\{ (x^\prime, x_d):\, x^\prime\in \mathbb{R}^{d-1}
\text{ and }
x_d>\psi(x^\prime)\}$ is the region above a Lipschitz graph.
By a rescaling argument we may further assume that $\varep=1$.

We first estimate the integral of $|\nabla u_1 (x)|^2\delta (x)$
on 
$$
D=\Omega + (0,\dots, 1)
=\{ (x^\prime, x_d): x_d >\psi (x^\prime)+1\}.
$$
By  the asymptotic estimates of 
$\nabla_x\nabla_y\Gamma_1 (x,y)$ for $|x-y|\ge 1$
in \cite[p.906]{AL-1991}, we may deduce that if $x\in D$,
\begin{equation}\label{6.5.1}
|\nabla u_1 (x) -W (x) |
\le C\int_{\partial\Omega} \frac{|f(y)|\, d\sigma (y)}{|x-y|^{d+\eta}}
\end{equation}
for some $\eta>0$, where $W (x)$ is a finite sum of functions of form
$$
e_{ij} (x) \int_{\partial\Omega} \frac{\partial^2}{\partial x_i\partial y_j}
\big\{\Gamma^{\alpha\beta}_0 (x,y)\big\} g^\beta (y)\, d\sigma (y),
$$
with $|e_{ij}(x)|\le C$ and $|g^\beta|\le C |f|$.
Recall that $\Gamma_0 (x,y)$ is the matrix of
fundamental solutions for the operator
$\mathcal{L}_0$ (with constant coefficients), 
for which the estimate (\ref{square-prop-estimate})
is well known \cite{Dahlberg-Kenig-Pipher-Verchota}. 
It follows that
\begin{equation}\label{8.5.2}
\int_\Omega |W(x)|^2 \, \delta(x)\, dx \le C \int_{\partial\Omega} |f|^2\, d\sigma.
\end{equation}
Let $I(x)$ denote the integral in the right hand side of (\ref{6.5.1}).
By the Cauchy inequality,
$$
|I(x)|^2\le C\big\{ \delta(x)\big\}^{-1-\eta} 
\int_{\partial\Omega} \frac{|f(y)|^2\, d\sigma (y)}{|x-y|^{d+\eta}}.
$$
This gives
$\int_D |I(x)|^2\, \delta(x)\, dx \le C \| f\|_{L^2(\partial\Omega)}^2$ and
thus $\int_D |\nabla u_1 (x)|^2\, \delta(x)\, dx \le C \| f\|_{L^2(\partial\Omega)}^2$.

To handle $\nabla u_1$ in $\Omega\setminus D$, we let
\begin{equation}\label{8.5.3}
\aligned
&\Delta (r)=\big\{ (x^\prime, \psi(x^\prime)):\, |x^\prime|<r\big\},\\
&T(r)=\big\{ (x^\prime, x_d):\, |x^\prime|<r \text{ and } \psi(x^\prime)<x_d<\psi(x^\prime)+C_0r
\big\}.
\endaligned
\end{equation}
We will show that if $\mathcal{L}_1(u)=0$ in the Lipschitz domain $T(2)$ and $(u)^*\in L^2$, then
\begin{equation}\label{8.5.4}
\int_{T(1)}
|\nabla u (x)|^2\, |x_d-\psi(x^\prime)|\, dx
\le C\int_{\Delta(2)} |u|^2\, d\sigma +C \int_{T(2)} |u|^2\, dx,
\end{equation}
which is bounded by $ C\int_{\Delta(2)} |(u)^*|^2\, d\sigma$.
By a simple covering argument one may deduce from (\ref{8.5.4}) that
\begin{equation}\label{8.5.6}
\int_{\Omega\setminus D}
|\nabla u_1 |^2\, \delta (x)\, dx
\le C \int_{\partial\Omega} |(u_1)^*|^2\, d\sigma
\le C \int_{\partial\Omega} |f|^2\, d\sigma,
\end{equation}
where the last inequality was proved in \cite{Kenig-Shen-2}.

Finally, to see (\ref{8.5.4}), we use the square function estimate for
$\mathcal{L}_1$ on $T(r)$ for $3/2<r<2$,
\begin{equation}\label{8.5.7}
\int_{T(r)} |\nabla u(x)|^2 \text{dist}(x, \partial T(r))\, dx
\le C \int_{\partial T(r)} |u|^2\, d\sigma,
\end{equation}
to obtain
\begin{equation}\label{8.5.8}
\int_{T(1)}
|\nabla u(x)|^2\, |x_d-\psi(x^\prime)|\, dx
\le C\int_{\Delta(2)} |u|^2\, d\sigma
+C \int_{\partial T(r)\setminus \Delta (2)} |u|^2\, d\sigma.
\end{equation}
Estimate (\ref{8.5.4}) follows by integrating both sides of (\ref{8.5.8}) 
in $r\in (3/2,2)$.
We remark that under the condition $A\in \Lambda(\mu,\lambda,\tau)$,
the square function estimate (\ref{8.5.7})
follows from the double layer potential representation obtained in
\cite{Kenig-Shen-2} for solutions
of the $L^2$ Dirichlet problem by a $T(b)$-theorem argument 
(see e.g. \cite[pp.9-11]{Mitrea-Mitrea-Taylor}).
This completes the proof.
\end{proof}

\bibliography{kls2}

\small
\noindent\textsc{Department of Mathematics, 
University of Chicago, Chicago, IL 60637}\\
\emph{E-mail address}: \texttt{cek@math.uchicago.edu} \\

\noindent \textsc{Courant Institute of Mathematical Sciences, New York University, New York, NY 10012}\\
\emph{E-mail address}: \texttt{linf@cims.nyu.edu}\\

\noindent\textsc{Department of Mathematics, 
University of Kentucky, Lexington, KY 40506}\\
\emph{E-mail address}: \texttt{zshen2@email.uky.edu} \\

\noindent \today

\end{document}